\documentclass[12pt,reqno]{amsart}
\usepackage{etoolbox}

\DeclareMathAlphabet\EuScript{U}{eus}{m}{n}
\DeclareMathAlphabet\EuScriptBold{U}{eus}{b}{n}
\DeclareMathAlphabet\Eurm{U}{eur}{m}{n}
\DeclareMathAlphabet\Eurb{U}{eur}{b}{n}
\newcommand{\mathcalb}{\EuScriptBold}

\usepackage{mathtools}
\usepackage{etoolbox}
\usepackage{url}

\DeclareFontFamily{U}{matha}{\hyphenchar\font45}
\DeclareFontShape{U}{matha}{m}{n}{
	<-6> matha5 <6-7> matha6 <7-8> matha7
	<8-9> matha8 <9-10> matha9
	<10-12> matha10 <12-> matha12
}{}
\DeclareSymbolFont{matha}{U}{matha}{m}{n}

\DeclareFontFamily{U}{mathx}{\hyphenchar\font45}
\DeclareFontShape{U}{mathx}{m}{n}{
	<-6> mathx5 <6-7> mathx6 <7-8> mathx7
	<8-9> mathx8 <9-10> mathx9
	<10-12> mathx10 <12-> mathx12
}{}
\DeclareSymbolFont{mathx}{U}{mathx}{m}{n}

\DeclareMathDelimiter{\vvvert} {0}{matha}{"7E}{mathx}{"17}%

\DeclarePairedDelimiterX{\normiii}[1]
{\vvvert}
{\vvvert}
{\ifblank{#1}{\:\cdot\:}{#1}}
\newcommand\scal[2]{\left\langle #1,#2\right\rangle}

\let\CMcal=\mathcal

\def\HH{\mathcal{H}}
\def\BH{\mathcalb{B}(\mathcal{H})}
\def\BXY{\mathcalb{B}(X,Y)}
\def\BXYz{\mathcalb{B}(X,Y^*)}

\def\M{\mathfrak{M}}

\def\N{\mathbb{N}}
\def\R{\mathbb{R}}
\def\C{\mathbb{C}}

\newcommand{\cci}{ {\mathcalb  C}_\iii({\mathcal H}) }

\newcommand{\GG}{\mathbb{G}}

\newcommand{\tr}{\operatorname{tr}}

\newcommand{\sgn}{\operatorname{sgn}}

\newcommand{\AAA}{ {\mathscr A} }
\newcommand{\AAt}{ {\mathscr A}_t }
\newcommand{\BBB}{ {\mathscr B} }
\newcommand{\CCC}{ {\mathscr C} }

\newcommand{\XzzYz}{X^{**}\widehat{\otimes}_\pi Y^*}

\newcommand{\WWW}{\Omega}

\newcommand{\iii}{\infty}

\newcommand{\sumn}{\sum_{n=1}^\iii}

\newcommand{\KXY}{\mathcalb{K}(X,Y)}

\newcommand{\ccc}{ {\mathcalb C}}

\newcommand{\ccj}{ {\mathcalb  C}_1({\CMcal H}) }

\newcommand{\PP}{\mathbb{P}}

\newcommand{\Btn}{\BBB_t^{(n)}}
\newcommand{\Stn}{\mathcal{S}_t^{(n)}}

\usepackage{mathtools} 
\usepackage{fixdif} 
\usepackage{leftindex}

\makeatletter
\patchcmd\maketitle
{\uppercasenonmath\shorttitle}
{}
{}{}
\patchcmd\maketitle
{\@nx\MakeUppercase{\the\toks@}}
{\the\toks@}
{}
{}{}
\patchcmd\@settitle{\uppercasenonmath\@title}{\Large}{}{}
\patchcmd\@setauthors
{\MakeUppercase{\authors}}
{\authors}
{}{}
\makeatother

\usepackage{amsmath,amssymb,amsthm, amsfonts}
\usepackage{color}
\usepackage{url}
\usepackage{tikz-cd}
\usepackage{combelow}
\input{mathrsfs.sty}
\usepackage[utf8]{inputenc}
\usepackage[T1]{fontenc}
\usepackage{enumerate}
\newtheorem{theorem}{Theorem}[section]

\newtheorem{definition}{Definition}[section]

\newtheorem{corollary}{Corollary}[section]
\newtheorem{proposition}{Proposition}[section]

\newtheorem{lemma}{Lemma}[section]
\newtheorem{remark}{Remark}[section]
\newtheorem{example}{Example}[section]
\numberwithin{equation}{section}

\usepackage[colorlinks=true]{hyperref}
\hypersetup{urlcolor=blue, citecolor=red , linkcolor= blue}

\begin{document}

      \title[Strongly Integrable Operator-Valued Functions...]{Strongly Integrable Operator-Valued Functions, Generated Vector Measures and \\ Compactness of Integrals}
		\keywords{Operator-valued functions, integration in Banach spaces, vector measures, compact operators, spectral radius}
		
		\subjclass[2020]{Primary 46G10, 46B20; Secondary 47B07, 47A10, 46B15}

\author[Milo\v s Arsenovi\' c]{Milo\v s Arsenovi\' c}
		\address{Faculty of Mathematics, University of Belgrade, Studentski trg 16, Belgrade, Serbia
		}
	
	\email{\url{milos.arsenovic.bg.ac.rs}}
	
		\author[Mihailo Krsti\' c]{Mihailo Krsti\' c}
		\address{Faculty of Mathematics, University of Belgrade, Studentski trg 16, Belgrade, Serbia
		}
		\email{\url{mihailo.krstic@matf.bg.ac.rs}}
        
		\author[Matija Milovi\' c]{Matija Milovi\' c}
		\address{Faculty of Mathematics, University of Belgrade, Studentski trg 16, Belgrade, Serbia
		}
	
	\email{\url{matija.milovic@matf.bg.ac.rs}}

    \author[Stefan Milo\v sevi\' c]{Stefan Milo\v sevi\' c}
		\address{Faculty of Mathematics, University of Belgrade, Studentski trg 16, Belgrade, Serbia
		}
	
	\email{\url{stefan.milosevic.bg.ac.rs}}


		
		
		\begin{abstract} Gel'fand integral of a family of compact operators on a Hilbert space is not always compact, even with additional property of positivity and commutativity. 
        We prove that integrals of a family, consisting of compact operators, in the space $L_{s}^1(\Omega,\mu,\mathcalb B(X, Y))$ of strongly integrable families are compact whenever $X$ does not contain an isomorphic copy of $\ell^1$.
       In addition, we prove an integral inequality for spectral radius
$$r\left(\int_\WWW\AAA \,d\mu\right)\leqslant\int_\WWW r(\AAA_t)\,d\mu(t)$$
for a mutually commuting family $\AAA$ in $L_s^1(\Omega,\mu,\mathcalb B(X))$, which generalizes a recent result obtained under a stronger assumption of Bochner integrability.
We prove also approximation results in $L_s^1(\Omega,\mu,\mathcalb B(X))$ in the case $X$ has finite dimensional Schauder decomposition.
All these results are based on a key theorem of this paper which states that every function in $L_{s}^1(\Omega,\mu, \mathcalb B(X, Y))$ generates a countably additive, in operator norm, $\mathcalb B(X, Y)$-valued measure whenever $X^*$ does not contain an isomorphic copy of $\mathfrak{c}_0$.
		\end{abstract}
		
		\maketitle			

\section{Introduction}

\subsection{Motivation, organization of the paper and notation}

Research on integration in Banach spaces began in early 20th century, based on theory of Lebesgue integral. A systematic and modern development started in the 1930s, when Salomon Bochner introduced the Bochner integral, providing a framework for integrating Banach space–valued functions, see \cite{Bochner1933}. 
Shortly thereafter, in the late 1930s, B. J. Pettis developed the Pettis integral, which is based on weak measurability, and extended the theory to broader classes of functions, see \cite{Pettis1938}. 
During the 1940s and 1950s, further contributions by mathematicians such as Dunford, Schwartz, and Gelfand firmly established integration in Banach spaces as a distinct topic in functional analysis, which has important applications to operator theory and probability theory. A standard reference for the general theory of integration in Banach spaces is a classical book \cite{DU}. For a short and clear introduction to this theory we refer reader to \cite[Sections 11.8,\,11.9,\,11.10]{Alip}.

Over the past two decades considerable attention has been given to integration of operator-valued (o.\,v.) functions, where integrability is defined by integrability of scalarizations (see Subsections ~\ref{OV}, \ref{OVH}). 
This line of research, where operators act on a Hilbert space $\HH$, can be found in a pioneering paper \cite{J05} and later in \cite{ JKL20, MMS, Matija, MK}. This theory has applications, for instance, in the study of spectral and numerical radii of integrals of operator-valued functions; see \cite{HMspectralr, HMnumrad}.
In this paper, we study functions taking values in the space $\BXY$, where $X$ and $Y$ are Banach spaces. Spaces of such o.\,v. functions were investigated in \cite{MAMK, BJN, OBIGB}. 
Recently, the study of integrable functions has been extended from operator-valued settings to functions with values in spaces of measures, see \cite{ABK1,ABK2}.

In this paper we continue investigation, initiated in \cite{BJN}, of a normed space $L^1_s(\Omega, \M, \mu, \mathcalb B(X, Y))$, which is wider than the space $L^1(\Omega, \M, \mu, \mathcalb B(X, Y))$ of Bochner integrable $\mathcalb B(X, Y)$-valued functions. 
It fits into a scale of normed spaces $L^p_s(\Omega, \M, \mu, \mathcalb B(X, Y))$, $1 \leqslant p < +\infty$, see \cite{BJN}. 
Finiteness of the corresponding norm was built into the definition of these spaces, but this finiteness follows from the assumption of strong $p$-integrability, see Proposition \ref{pBnormProp} below.
It was proved in \cite{BJN}, assuming $\mu(\Omega) < +\infty$, that a family in $L^p_s(\Omega, \M, \mu, \mathcalb B(X, Y))$ generates, via integration over measurable subsets of $\Omega$, a countably additive operator valued measure whenever $1 < p < +\infty$ and also for $p=1$ if a uniform integrability condition holds.
Here we prove that both conditions of uniform integrability and finiteness of $\mu$ can be dropped if $X^*$ does not contain an isomorphic copy of $\mathfrak{c}_0$, see Theorem \ref{ACmuPb} below. 
In fact, uniform integrability is always satisfied for families in $L_s^1(\Omega, \mu, \mathcalb B(X, Y))$ provided $X$ contains no copy of $\ell^1$, see Corollary \ref{UINT}.
Theorem \ref{ACmuPb} is a key result of the paper, it is used to prove that a strongly integrable family of compact operators on a separable Hilbert space is Pettis integrable, and hence its integrals over measurable subsets are compact, see Corollary \ref{petisCii} and Theorem \ref{Cpt} for a stronger statement.
In the last section we apply developed theory to the spectral radius inequality, see Theorem \ref{thradijus}. 

Throughout this paper $(\Omega,\M,\mu)$ stands for a measurable space with a complete measure $\mu$. 
If $\Omega$ is an interval in $\R$, then we take $\mu$ to be the Lebesgue measure $m$ on Lebesgue measurable subsets of $\Omega$. 
If $\Omega = \N$, then $\mu$ is assumed to be the counting measure on $\mathcal{P}(\N)$. We often omit $\M$ and $\mu$ from notation, especially in the above special cases of an interval and $\N$.  Disjoint unions are denoted by $\bigsqcup$.

In this paper $X$, $Y$ and $Z$ denote Banach spaces, $\|\cdot\|_X$ denotes norm in $X$, $B_X$ denotes the closed unit ball of $X$ and topological dual of $X$ is denoted by $X^\ast$. We shall use the notation $\scal{x}{x^*}$ to denote the duality pairing of the elements $x\in X$ and $x^*\in X^*$.
The space of bounded linear operators from $X$ to $Y$ will be denoted by $\mathcalb B(X, Y)$ and for $A\in\BXY$ the usual operator norm is denoted by $\|A\|$. Also, we set $\mathcalb B(X) = \mathcalb B(X, X)$.  The adjoint of $A \in \mathcalb B(X, Y)$ is denoted by $A^*\in\mathcalb B(Y^*, X^*)$.
The space of \textit{compact} linear operators from $X$ to $Y$ is denoted by $\KXY$.

For $a$ in $Y$ and $b^*$ in $X^*$, rank-$1$ operator $a\otimes b^*$ in $\mathcalb B(X, Y)$ is defined by: $(a \otimes b^*)(x) = \scal{x}{b^*} a$, $x \in X$. 
An equality $\| a \otimes b^* \| = \| a \|_Y \cdot \| b^* \|_{X^*}$ is easily verified. 
Also, the next formula will be used later in the paper:
\begin{equation}\label{tesor} 
(a \otimes b^*) \circ A = a \otimes A^*b^*, \qquad A \in \mathcalb B(X, Y), \quad a \in X, \quad b^* \in Y^*.
\end{equation}
Its verification is straightforward, for all $x\in X$ we have: 
$$((a \otimes b^*) \circ A)x = (a \otimes b^*) (Ax) = \langle Ax, b^* \rangle a = \langle x, A^*b^* \rangle a = (a \otimes A^*b^*) x.$$

The spectrum of an operator $A \in \mathcalb B(X)$ is denoted by $\sigma(A)$ and its spectral radius by $r(A)$. 
An operator $A \in \mathcalb B(X)$ is said to be Volterra if it is compact and $\sigma (A) = \{ 0 \}$.

$\HH$ denotes a separable Hilbert space. 
The space of compact linear operators on $\HH$ is denoted by $\cci$ and the space of nuclear operators on $\HH$ is denoted by $\mathcalb C_1(\HH)$. 

\subsection{Preliminaries from Banach space theory}\label{Pgr}

A series $\sum_{n=1}^\infty x_n$ in $X$ is said to be \textit{unconditionally convergent} if it converges independently of the order of its terms, that is, for every permutation $\pi:\mathbb{N}\to\mathbb{N}$ the series $\sum_{n=1}^\infty x_{\pi(n)}$ converges in $X$.
A series $\sum_{n=1}^{\infty}x_n$ in $X$ is said to be \textit{weakly unconditionally Cauchy} if $\sum_{n=1}^\infty |\scal{x_n}{x^*}| < +\infty$ for all $x^*$ in $X^*$. The space of all such sequences is denoted by $\ell^1_w(X)$ with a norm given by
$$\left\|(x_n)_{n=1}^\iii \right\|_{\ell^1_w(X)} = \sup_{\| x^\ast \|_{X^*} = 1} \sum_{n=1}^\infty | \langle x_n, x^\ast \rangle |.$$
In the case where $X$ is a dual space, the following lemma holds.
\begin{lemma}\cite[Section~8.2\,]{DF}\label{ell1slslz}
We have the next equality:
$$\ell_w^1(X^*) = \left\{(x_n^*)_{n=1}^\infty:\N\to X^* \,\,\Bigg|\,\,\sum_{n=1}^\infty |x_n^*(x)| < +\infty \text{ for all } x \in X \right\}.$$
\end{lemma}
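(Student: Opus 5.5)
The equality asserts that a sequence $(x_n^*)$ in $X^*$ is weakly unconditionally Cauchy as soon as it is "weak$^*$ unconditionally Cauchy". That is, testing against elements of $X$ suffices, and we do not need to test against all of $X^{**}$. I will prove the two inclusions separately.

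The inclusion "$\subseteq$" is immediate. Each $x \in X$ defines the functional $J x \in X^{**}$ via the canonical embedding, and $\scal{x_n^*}{Jx} = x_n^*(x)$. So the defining condition of $\ell^1_w(X^*)$, applied to $Jx$, gives $\sum_n |x_n^*(x)| < +\infty$.

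For "$\supseteq$", assume $\sum_n |x_n^*(x)| < +\infty$ for every $x \in X$, and proceed as follows.

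\emph{Step 1: a uniform bound via Banach--Steinhaus.} Define $T : X \to \ell^1$ by $Tx = (x_n^*(x))_{n=1}^\infty$. This map is well defined and linear. Its graph is closed, because each coordinate map $x \mapsto x_n^*(x)$ is continuous and convergence in $\ell^1$ implies coordinatewise convergence. By the closed graph theorem $T$ is bounded, so there is $C$ with
$$\sum_{n=1}^\infty |x_n^*(x)| \leqslant C\|x\|_X \quad \text{for all } x \in X.$$
Equivalently, for each $N$ and each choice of unimodular scalars $\varepsilon_1,\dots,\varepsilon_N$ we have $\bigl\|\sum_{n=1}^N \varepsilon_n x_n^*\bigr\|_{X^*} \leqslant C$. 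This equivalence holds because $\sup_{\|x\|\leqslant 1}\sum_{n\leqslant N}|x_n^*(x)|$ equals the supremum over signs of $\bigl\|\sum_{n\leqslant N}\varepsilon_n x_n^*\bigr\|$, by taking $\varepsilon_n = \overline{\sgn\, x_n^*(x)}$.

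\emph{Step 2: transfer to $X^{**}$.} Fix $x^{**} \in X^{**}$ and $N$. Choose $\varepsilon_n$ unimodular with $\varepsilon_n \scal{x_n^*}{x^{**}} = |\scal{x_n^*}{x^{**}}|$. Then
$$\sum_{n=1}^N |\scal{x_n^*}{x^{**}}| = \Bigl\langle \sum_{n=1}^N \varepsilon_n x_n^*, x^{**}\Bigr\rangle \leqslant \Bigl\|\sum_{n=1}^N \varepsilon_n x_n^*\Bigr\|_{X^*}\|x^{**}\| \leqslant C\|x^{**}\|.$$
Letting $N \to \infty$ shows $\sum_n |\scal{x_n^*}{x^{**}}| < +\infty$. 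Hence $(x_n^*) \in \ell_w^1(X^*)$, and moreover $\|(x_n^*)\|_{\ell^1_w(X^*)} \leqslant C$.

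The only step with real content is Step 1, namely extracting the uniform constant $C$ from pointwise finiteness. Once the finite sign-sums $\sum_{n\leqslant N}\varepsilon_n x_n^*$ are known to be uniformly bounded in the norm of $X^*$, the passage from $X$ to $X^{**}$ is automatic. The reason is that these finite sums are themselves elements of $X^*$, and their norm is computed by testing on $B_X$.
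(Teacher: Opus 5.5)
Your proof is correct and complete. The paper gives no proof of its own here; it quotes the lemma from Defant--Floret. Your argument is the standard one: the closed graph theorem gives a uniform bound on $X$, the finite sign sums $\sum_{n\leqslant N}\varepsilon_n x_n^*$ are then bounded in the norm of $X^*$, and pairing them with $x^{**}$ finishes the proof. No appeal to Goldstine or weak$^*$ density is needed, because the norm of $X^*$ is computed on $B_X$.

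It is worth recording explicitly what Step~2 yields when $C$ is taken optimal in Step~1. Since the reverse inequality is trivial, you get the isometric identity
$$\|(x_n^*)_{n=1}^\infty\|_{\ell^1_w(X^*)} = \sup_{\|x\|_X\leqslant 1}\sum_{n=1}^\infty |x_n^*(x)| .$$
This norm equality, and not just the set equality in the statement, is what the paper actually uses. It appears in the proof of Proposition~\ref{limnula} and in the proof of the cotype estimate \eqref{cotype}, where an $\ell^1_w(X^*)$-norm is evaluated as a supremum over $B_X$.

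A minor point: your heading for Step~1 says Banach--Steinhaus, but the argument uses the closed graph theorem. Either works; uniform boundedness would be applied to the truncations $x\mapsto(x_1^*(x),\dots,x_N^*(x))$.
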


In the case of $X=Y=\HH$, the space $\ell_{w}^{1} (\HH)$ consists of all sequences $(f_n)_{n=1}^\iii$ in $\HH$ such that $(\scal{f_n}{f})_{n=1}^\iii$ belongs to the space $\ell^1$ for all $f \in \HH$. 

It is said that a Banach space $X$ has cotype $q$, for $2\leqslant q <+\infty$, if there is a constant $\kappa > 0$ such that for every $n\in\N$ and $x_1,x_2,\ldots,x_n\in X$ holds
\begin{equation*}
\left(\sum_{k=1}^n\|x_k\|_X^q\right)^\frac{1}{q}\leqslant\kappa\cdot\left(\int_0^1\left\|\sum_{k=1}^nr_k(t)x_k\right\|^2_Xdt\right)^\frac{1}{2},
\end{equation*}
where $r_n(t) = \sgn(\sin(2^{n+1}\pi t))$ are Rademacher functions.
The smallest of all such constants $\kappa$ will be denoted by $C_q(X)$. By \cite[Proposition 14.6]{DJT} we get the following theorem.
\begin{theorem}\label{kotipniz}
    Let $X$ be a Banach space that has cotype $q$. Then, $\ell^1_w(X)\subseteq\ell^q(X)$ and for every $(x_n)_{n=1}^\iii\in\ell^1_w(X)$ holds 
    \begin{equation*}
     \|(x_n)_{n=1}^\iii\|_{\ell^q(X)}\leqslant C_q(X)\cdot\|(x_n)_{n=1}^\infty\|_{\ell^1_w(X)}   
    \end{equation*}
\end{theorem}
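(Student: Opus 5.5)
The plan is to reduce to finite sums and combine the cotype inequality with an averaging argument over sign choices. Fix $(x_n)_{n=1}^\iii\in\ell^1_w(X)$ and $N\in\N$. Applying the definition of cotype $q$ to $x_1,\dots,x_N$ gives
$$\left(\sum_{k=1}^N\|x_k\|_X^q\right)^{1/q}\leqslant C_q(X)\left(\int_0^1\left\|\sum_{k=1}^N r_k(t)x_k\right\|_X^2dt\right)^{1/2}.$$
Since the infimum over admissible constants is attained in the limit, the inequality holds with $C_q(X)$ itself. The task is then to bound the Rademacher average on the right by the weak $\ell^1$ norm.

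For each fixed $t$ the values $r_k(t)$ lie in $\{-1,1\}$ (up to a null set). Hence $\left\|\sum_{k=1}^N r_k(t)x_k\right\|_X=\sup_{\|x^*\|=1}\left|\sum_{k=1}^N r_k(t)\scal{x_k}{x^*}\right|$, and this is at most $\sup_{\|x^*\|=1}\sum_{k=1}^N|\scal{x_k}{x^*}|$. That supremum in turn is at most $\|(x_n)_{n=1}^\iii\|_{\ell^1_w(X)}$. The integrand is therefore bounded pointwise by $\|(x_n)\|_{\ell^1_w(X)}^2$, so the $L^2$-average is bounded by $\|(x_n)\|_{\ell^1_w(X)}$. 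This yields
$$\left(\sum_{k=1}^N\|x_k\|_X^q\right)^{1/q}\leqslant C_q(X)\|(x_n)\|_{\ell^1_w(X)}.$$

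Letting $N\to\iii$ with the right-hand side independent of $N$ shows $(x_n)\in\ell^q(X)$ together with the stated norm estimate. The same argument also gives the inclusion $\ell^1_w(X)\subseteq\ell^q(X)$.

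There is no real obstacle. The only point needing care is that the $\ell^1_w$ norm must be finite for the bound to mean anything. This follows from the uniform boundedness principle: the maps $x^*\mapsto(\scal{x_n}{x^*})_n$ from $X^*$ to $\ell^1$ have a closed graph, or equivalently the finite-sum functionals are pointwise bounded on $X^*$. This is implicit in the definition of $\ell^1_w(X)$ as a normed space, so the result is essentially the cited \cite[Proposition 14.6]{DJT}.
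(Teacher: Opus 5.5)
Your proof is correct and is exactly the standard argument behind \cite[Proposition 14.6]{DJT}, which the paper cites instead of giving its own proof. Apply the cotype inequality to $x_1,\dots,x_N$, bound the Rademacher average pointwise by $\sup_{\|x^*\|=1}\sum_{k\leqslant N}|\langle x_k,x^*\rangle|\leqslant\|(x_n)_{n=1}^\infty\|_{\ell^1_w(X)}$, and let $N\to\infty$; your use of the uniform boundedness principle to get finiteness of the weak $\ell^1$ norm correctly supplies the one extra point needed for the inclusion.
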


\begin{remark}\label{kotiptac}
    As noted in the proof of \cite[Proposition 14.6]{DJT}, Theorem \ref{kotipniz} states that $I_X$ is $(q,1)$-summing and from the definition of $(q,1)$-summing norm we have $\pi_{q,1}(I_X)\leqslant C_q(X)$. Moreover, the best constant in Theorem \ref{kotipniz} is $\pi_{q,1}(I_X)$.
\end{remark}

Specially, since the Hilbert space $\HH$ has a cotype 2 constant $C_2(\HH) = 1$, the following embedding theorem holds (details can be found in \cite[Section~8.\,9]{DF}).

\begin{theorem}\label{Pi21}
For every Hilbert space $\HH$ we have $\ell^1_w(\HH)\subseteq\ell^2(\HH)$, and for every $(x_n)_{n=1}^\iii\in\ell^1_w(\HH)$ we have $\| (x_n)_{n=1}^\iii \|_{\ell^2(\HH)} \leqslant \| (x_n)_{n=1}^\iii \|_{\ell^1_w(\HH)}$.
\end{theorem}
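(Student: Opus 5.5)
Theorem \ref{Pi21} is the special case of Theorem \ref{kotipniz} with $q=2$, combined with the exact value $C_2(\HH)=1$. I would still give a self-contained argument, because in a Hilbert space everything comes from the parallelogram law.

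Fix $(x_n)_{n=1}^\iii\in\ell^1_w(\HH)$ and set $M=\|(x_n)_{n=1}^\iii\|_{\ell^1_w(\HH)}$. It suffices to prove, for each fixed $N\in\N$,
$$\sum_{n=1}^N\|x_n\|^2\leqslant M^2,$$
and then let $N\to\iii$. This gives both the inclusion $\ell^1_w(\HH)\subseteq\ell^2(\HH)$ and the norm bound at once.

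\textbf{Step 1: an averaging identity.} Orthogonality of the Rademacher functions, $\int_0^1 r_k r_j\,dt=\delta_{kj}$, gives
$$\int_0^1\Bigl\|\sum_{n=1}^N r_n(t)x_n\Bigr\|^2dt=\sum_{n=1}^N\|x_n\|^2.$$
To check this, expand the square using the inner product. The cross terms $\operatorname{Re}\scal{x_k}{x_j}\int_0^1 r_kr_j\,dt$ vanish for $k\neq j$.

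\textbf{Step 2: pick a good sign pattern.} Since the integrand is a simple function taking finitely many values, some $t_0$ satisfies
$$\Bigl\|\sum_{n=1}^N\varepsilon_nx_n\Bigr\|^2\geqslant\sum_{n=1}^N\|x_n\|^2,\qquad \varepsilon_n=r_n(t_0)\in\{\pm1\}.$$

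\textbf{Step 3: bound the signed sum by $M$.} Put $y=\sum_{n=1}^N\varepsilon_nx_n$. By Riesz duality there is a unit vector $f$ with
$$\|y\|=|\scal{y}{f}|\leqslant\sum_{n=1}^N|\scal{x_n}{f}|\leqslant M.$$
Combining Steps 2 and 3 gives $\sum_{n=1}^N\|x_n\|^2\leqslant\|y\|^2\leqslant M^2$, as required.

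The only point needing care is Step 2: we must pass from the average over $t$ to a single good choice of signs rather than an inequality that holds only on average. This is routine because the integrand is a simple function. Alternatively, one can avoid choosing $t_0$ altogether by bounding the integrand pointwise by $M^2$, since Step 3 works for every choice of signs.
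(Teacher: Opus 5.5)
Your proof is correct and follows the route the paper indicates, namely specializing Theorem~\ref{kotipniz} to $q=2$ with $C_2(\HH)=1$. Your Step~1 is exactly the computation giving $C_2(\HH)=1$, and Steps~2--3 are the standard argument behind Theorem~\ref{kotipniz}, bounding $\bigl\|\sum_{n=1}^N\varepsilon_nx_n\bigr\|$ by the $\ell^1_w$-norm for every choice of signs; the only tacit input is $M<+\infty$, which the paper builds into calling the supremum a norm on $\ell^1_w(\HH)$.
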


We record here two results from Banach space theory that we need. 
Let us recall that a subset $C$ of a Banach space $X$ is \textit{conditionally weakly compact} if every sequence in $C$ has a weakly Cauchy subsequence. The following Rosenthal $\ell^1$ Theorem can be found in \cite[Page 201]{D}.
\begin{theorem}\label{CWC}
 A Banach space $X$ does not contain a copy of $\ell^1$ if and only if every bounded subset of $X$ is conditionally weakly compact.
\end{theorem}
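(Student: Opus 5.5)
Theorem \ref{CWC} is Rosenthal's $\ell^1$ theorem, which is quoted from \cite{D}. The natural plan is to prove the two implications separately: the easy direction directly, and the hard direction through Rosenthal's dichotomy for bounded sequences.

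\emph{The easy direction.} Suppose every bounded subset of $X$ is conditionally weakly compact, and assume for contradiction that $T:\ell^1\to X$ is an isomorphic embedding. Let $(e_n)$ be the unit vector basis of $\ell^1$. Then $(Te_n)$ is bounded in $X$, so it has a weakly Cauchy subsequence $(Te_{n_k})$. Since $T$ is an isomorphism onto its range, weak Cauchyness transfers back to $(e_{n_k})$ in $\ell^1$; here we use Hahn--Banach and the fact that every functional on $T(\ell^1)$ extends to $X$. This is impossible. To see it, take $\varphi=(\varepsilon_j)\in\ell^\infty=(\ell^1)^*$ with $\varepsilon_{n_k}=(-1)^k$. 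Then $\langle e_{n_k},\varphi\rangle=(-1)^k$ does not converge. Hence $X$ contains no copy of $\ell^1$.

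\emph{The hard direction.} Suppose $X$ contains no copy of $\ell^1$, and let $(x_n)$ be a bounded sequence. It suffices to show that $(x_n)$ has a weakly Cauchy subsequence. The key step is Rosenthal's dichotomy: every bounded sequence has either a weakly Cauchy subsequence or a subsequence equivalent to the unit vector basis of $\ell^1$. The second alternative is excluded by hypothesis, since such a subsequence spans an isomorphic copy of $\ell^1$.

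To prove the dichotomy, I would restrict to $K=B_{X^*}$ with the weak$^*$ topology. I would view the $x_n$ as a uniformly bounded sequence of continuous functions $f_n$ on the compact space $K$. Weak Cauchyness of a subsequence then means pointwise convergence on $K$.

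I would then use the combinatorial fact, due to Rosenthal and proved via Ramsey theory (Nash-Williams/Galvin--Prikry), that a uniformly bounded sequence of functions has either a pointwise convergent subsequence or an independent subsequence. \emph{Independent} means there are reals $r<s$ such that the sets $\{f_{n_k}<r\}$ and $\{f_{n_k}>s\}$ form a Boolean independent family.

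Finally, independence forces $\ell^1$-behaviour, which I would check as follows. Given scalars $c_1,\dots,c_m$, independence provides a point $x^*\in K$ where $f_{n_k}(x^*)>s$ for $c_k\geqslant0$ and $f_{n_k}(x^*)<r$ for $c_k<0$. Comparing with a second point where the roles are reversed yields
$$\Big\|\sum_k c_kx_{n_k}\Big\|\geqslant \tfrac{s-r}{2}\sum_k|c_k|$$
(with the usual splitting into real and imaginary parts in the complex case). The upper estimate $\|\sum_k c_kx_{n_k}\|\leqslant\sup_n\|x_n\|\sum_k|c_k|$ is trivial, so the subsequence is equivalent to the $\ell^1$ basis.

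The main obstacle is the combinatorial step: extracting either a convergent subsequence or an independent one. This requires the infinite Ramsey theorem applied to the Borel (open) partition of $[\N]^\omega$ determined by convergence behaviour. Since the paper cites the result from \cite{D}, in context I would treat this step as quoted rather than reprove it.
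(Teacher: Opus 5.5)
The paper gives no proof of this theorem; it only cites \cite{D}. Your outline is the standard argument, and for real scalars it is correct as written. That covers the easy direction via Hahn--Banach and an alternating-sign functional, the reduction to pointwise convergence on $K=B_{X^*}$, the quoted combinatorial dichotomy, and the two-point estimate.

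The genuine gap is your parenthetical treatment of the complex case. This matters here because the paper works over $\C$. Over $\C$ you must apply the dichotomy to the real functions $\operatorname{Re} f_n$. For the convergent alternative this is harmless: $K$ is circled and $\operatorname{Im} f_n(x^*)=-\operatorname{Re} f_n(ix^*)$, so convergence of real parts on $K$ gives weak Cauchyness.

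In the independent alternative, however, independence controls only $\operatorname{Re} f_{n_k}$ at the chosen points. But $\operatorname{Re}(c_kf_{n_k})=\operatorname{Re}c_k\operatorname{Re}f_{n_k}-\operatorname{Im}c_k\operatorname{Im}f_{n_k}$ also involves the imaginary parts, which are uncontrolled. Splitting the coefficients as $\sum_k c_kx_{n_k}=u+iv$ with $u,v$ real combinations does not help either, since $\|u+iv\|$ is not bounded below by $\|u\|$ or $\|v\|$.

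Concretely, take $x_{2k-1}=e_k$ and $x_{2k}=ie_k$ in $\ell^1_{\C}$. This sequence has Boolean independent real parts on $B_{\ell^\infty}$ (take $r=-\tfrac14$, $s=\tfrac14$), yet $x_1+ix_2=0$. So the independent subsequence your argument produces need not be equivalent to the complex $\ell^1$ basis.

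An extra idea is needed. One option is Dor's refinement, which extracts a further subsequence equivalent to the complex $\ell^1$ basis from a sequence with independent real parts. Alternatively, since the hard direction only needs $X$ to contain $\ell^1$, you can upgrade a real copy to a complex one:
\begin{enumerate}
\item Regard $X$ as a real space. It has the same weakly Cauchy sequences, so your real argument yields a real copy of $\ell^1$ in $X$.
\item By James's distortion theorem, this copy contains normalized real blocks $(u_k)$ with $\|\sum_k a_ku_k\|\geqslant(1-\varepsilon)\sum_k|a_k|$ for all real $a_k$.
\item Given complex $c_k$, choose signs $\epsilon_k$ with $|\sum_k\epsilon_kc_k|\geqslant\frac12\sum_k|c_k|$. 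Also choose a norming functional $x^*\in B_{X^*}$ for $\sum_k|c_k|\epsilon_ku_k$.
\item The inequality $|1-z|^2\leqslant 2(1-\operatorname{Re}z)$ for $|z|\leqslant 1$ and Cauchy--Schwarz give $\sum_k|c_k|\,|x^*(u_k)-\epsilon_k|\leqslant\sqrt{2\varepsilon}\sum_k|c_k|$.
\item Hence $\|\sum_kc_ku_k\|\geqslant(\frac12-\sqrt{2\varepsilon})\sum_k|c_k|$. For $\varepsilon<\frac18$ the complex closed span of $(u_k)$ is isomorphic to complex $\ell^1$.
\end{enumerate}
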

The next proposition is an easy consequence of Theorem \ref{CWC}, for the reader's convenience we include a proof from \cite{RosentalArxiv}. An extensive list of properties equivalent to "$X$ \textit{does not contain an isomorphic copy of} $\ell^1$" can be found in \cite{RosentalArxiv}.
\begin{proposition}\label{CCjeK}
    If $X$ does not contain a copy of $\ell^1$, every completely continuous operator $T\in\mathcalb{B}(X,Y)$ is compact, for every Banach space $Y$.
\end{proposition}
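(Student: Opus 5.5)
To prove Proposition \ref{CCjeK}, I will use Theorem \ref{CWC} to reduce compactness to a statement about weakly Cauchy sequences. Recall that $T$ is completely continuous if it maps weakly convergent sequences to norm convergent sequences. For compactness of $T$ it suffices to show that every bounded sequence $(x_n)_{n=1}^\infty$ in $X$ has a subsequence whose image under $T$ is norm convergent in $Y$; since $Y$ is complete, it is enough that this image be norm Cauchy.

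The steps, in order:

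\textbf{Step 1.} Take a bounded sequence $(x_n)$ in $X$. Since $X$ contains no copy of $\ell^1$, Theorem \ref{CWC} gives a weakly Cauchy subsequence, still denoted $(x_n)$.

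\textbf{Step 2.} Show that $(Tx_n)$ is norm Cauchy, arguing by contradiction. If not, there are $\varepsilon>0$ and increasing sequences $p_k<q_k$, with $p_k\to\infty$, such that
$$\|Tx_{p_k}-Tx_{q_k}\|_Y\geqslant\varepsilon \quad \text{for all } k.$$
Put $z_k=x_{p_k}-x_{q_k}$. Because $(x_n)$ is weakly Cauchy, for every $x^*\in X^*$ we have $\scal{z_k}{x^*}\to 0$. Hence $z_k\to 0$ weakly.

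\textbf{Step 3.} Complete continuity of $T$ then gives $\|Tz_k\|_Y\to 0$, which contradicts the choice of $\varepsilon$. So $(Tx_n)$ is Cauchy, hence convergent, and $T$ is compact.

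The only delicate point is Step 2. A completely continuous operator is only known to act well on weakly convergent sequences, and a weakly Cauchy sequence need not converge weakly in $X$. The trick is to pass to differences $x_{p_k}-x_{q_k}$, which are weakly null. This avoids any appeal to weak sequential completeness, which $X$ need not have (for example $X=\mathfrak c_0$). The rest is routine.
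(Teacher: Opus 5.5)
Your proof is correct and follows the paper's argument essentially step for step: you extract a weakly Cauchy subsequence via Theorem \ref{CWC}, note that the differences $x_{p_k}-x_{q_k}$ with $p_k,q_k\to\infty$ are weakly null, and use complete continuity to conclude that the image sequence is norm Cauchy, hence convergent. The only difference is presentational: you argue the Cauchy step by contradiction, whereas the paper states it directly for arbitrary pairs of strictly increasing index sequences.
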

\begin{proof}
Let $T\colon X\rightarrow Y$ be completely continuous  and let $(x_n)_{n=1}^\infty$ be a bounded sequence in $X$. Due to Theorem \ref{CWC} there exists a weakly Cauchy subsequence $(x_{n_k})_{k=1}^\infty$. For any strictly increasing sequences $(k_i)_{i=1}^\infty$ and $(m_i)_{i=1}^\infty$ of positive integers, the sequence $(x_{n_{k_i}}-x_{n_{m_i}})_{i=1}^\infty$ is weakly null, and by the hypothesis it follows that $\|T(x_{n_{k_i}}-x_{k_{m_i}})\|_Y = \|Tx_{n_{k_i}}-Tx_{k_{m_i}}\|_Y \to 0$ as $i\to\infty$. Hence $(Tx_{n_k})_{k=1}^\infty$ is a Cauchy sequence in $Y$ and thus convergent, since $Y$ is complete. Therefore, $T\in\KXY$, completing the proof. 
\end{proof}
The other result we need is the following theorem.
\begin{theorem}[Odell-Rosenthal Theorem]\label{OdellRosenthal}
   Let $X$ be a separable Banach space that does not contain $\ell^1$. Then, for every $x^{**}\in B_{X^{**}}$ there exists a sequence $(x_n)_{n=1}^\iii$ of elements in $B_X$ such that $\lim\limits_{n\rightarrow\iii}\scal{x_n}{x^*}=\scal{x^*}{x^{**}}$ for every $x^*\in X^*$.
\end{theorem}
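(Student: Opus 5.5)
The natural route is to combine Goldstine's theorem with Rosenthal's theory of pointwise compact sets of Baire class one functions. Throughout I would identify $X$ with its canonical image in $X^{**}$.

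\textbf{Step 1: reduce to pointwise convergence on a compact metric space.} By Goldstine's theorem, $B_X$ is weak$^*$ dense in $B_{X^{**}}$. So a given $x^{**}\in B_{X^{**}}$ lies in the weak$^*$ closure of $B_X$; the task is to replace this net approximation by a sequence. Put $K=(B_{X^*},w^*)$. Because $X$ is separable, $K$ is a compact metrizable space. Each $x\in B_X$ defines a function $\hat x(x^*)=\scal{x}{x^*}$, which is continuous on $K$. Let $A=\{\hat x: x\in B_X\}\subseteq C(K)$. Weak$^*$ convergence in $X^{**}$ of bounded sequences is exactly pointwise convergence on $K$, by homogeneity in $x^*$. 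Hence $\widehat{x^{**}}$ lies in the pointwise closure $\overline{A}^{p}$ of $A$ in $\R^K$ (or $\C^K$). It therefore suffices to show that every point of $\overline{A}^{p}$ is the pointwise limit of a sequence from $A$.

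\textbf{Step 2: relative sequential compactness.} Theorem \ref{CWC} applies because $X$ contains no copy of $\ell^1$: every sequence in $B_X$ has a weakly Cauchy subsequence. A weakly Cauchy sequence $(x_n)$ satisfies that $\scal{x_n}{x^*}$ converges for every $x^*\in X^*$. So every sequence in $A$ has a subsequence converging pointwise on $K$, and the limit is a Baire-one function on $K$. Thus $A$ is a uniformly bounded family of continuous functions on a Polish space which is relatively sequentially compact in $\R^K$, with limits in $B_1(K)$.

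\textbf{Step 3: pass from sequential compactness to sequential approximation.} This is the main obstacle. I would invoke Rosenthal's theorem on Baire-one functions, in the strengthened form of Bourgain--Fremlin--Talagrand: if $K$ is Polish and $A\subseteq C(K)$ is pointwise bounded, then the following are equivalent.
\begin{enumerate}[(i)]
\item Every sequence in $A$ has a pointwise convergent subsequence.
\item $\overline{A}^{p}\subseteq B_1(K)$.
\end{enumerate}
Moreover, in this situation $\overline{A}^{p}$ is an angelic (Fréchet--Urysohn) compact space, so every point of $\overline{A}^{p}$ is the pointwise limit of a sequence from $A$. Step 2 gives (i). Applied to $\widehat{x^{**}}$, this yields $x_n\in B_X$ with $\scal{x_n}{x^*}\to\scal{x^*}{x^{**}}$ for all $x^*\in B_{X^*}$, and hence, by scaling, for all $x^*\in X^*$.

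If I wanted to avoid quoting the full angelicity result, I would argue in two stages. First, I would show directly that $\widehat{x^{**}}|_K$ is Baire one. This can be done by contradiction: if it fails the Baire one criterion on some closed set (the standard oscillation condition), the usual Rosenthal tree construction produces a sequence in $B_X$ equivalent to the $\ell^1$ basis. Second, I would use the fact that a bounded Baire-one function in the pointwise closure of a convex set of continuous functions is a pointwise limit of a sequence from that set. Here the convexity of $B_X$ is what makes the sequential extraction work. In either version, the delicate part is exactly the passage from ``no $\ell^1$'' to sequential approximability in Step 3; Steps 1 and 2 are routine.
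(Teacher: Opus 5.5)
The paper does not prove this statement. It records it as the classical Odell--Rosenthal theorem and uses it as a black box, so there is no internal argument to compare with.

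\textbf{Your main argument is correct.} Steps 1--3 are the standard modern proof:
\begin{enumerate}[(a)]
\item Goldstine's theorem puts $\widehat{x^{**}}|_K$ in the pointwise closure of $A=\{\hat x: x\in B_X\}\subseteq C(K)$.
\item Theorem \ref{CWC} gives relative sequential compactness of $A$.
\item The Rosenthal/Bourgain--Fremlin--Talagrand theory gives $\overline{A}^{p}\subseteq B_1(K)$ and the Fr\'echet--Urysohn property of this Rosenthal compactum, which produces the sequence.
\end{enumerate}
A nice feature is that the bound $x_n\in B_X$ comes for free. All the depth sits in the quoted BFT theorem, which is legitimate to cite.

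One small point concerns scalars. These theorems are stated for real-valued functions, while the paper works over $\C$. Apply them to $\{\operatorname{Re}\hat x: x\in B_X\}$. Since $-iK=K$ and $\operatorname{Im}\scal{x}{x^*}=\operatorname{Re}\scal{x}{-ix^*}$, pointwise convergence of real parts on $K$ is equivalent to pointwise convergence on $K$.

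\textbf{Your fallback in the last paragraph has a genuine gap in its second stage.} The principle you state is false: a bounded Baire-one function in the pointwise closure of a convex set of continuous functions need not be a sequential pointwise limit from that set. Take $K=[0,1]$ and $C=\{g\in C[0,1]:\|g\|_\infty\leqslant 1,\ \int_0^1 g(t)\,dt=0\}$. The constant function $1$ lies in $\overline{C}^{p}$: make $g=1$ near finitely many prescribed points and compensate elsewhere. Yet $g_n\to 1$ pointwise with $g_n\in C$ would contradict dominated convergence.

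Convexity is not the structure that matters. What you need is the barycentric formula $\int_K\widehat{x^{**}}\,d\mu=\scal{r(\mu)}{x^{**}}$ for every $\mu\in C(K)^*$, where $r(\mu)\in X^*$ is given by $\scal{x}{r(\mu)}=\int_K\hat x\,d\mu$. Equivalently, $\int_K\widehat{x^{**}}\,d\mu=0$ whenever $\mu$ annihilates $\{\hat x:x\in X\}$. This is where linearity of $\widehat{x^{**}}$ in $x^*$ enters, and it requires a real theorem, such as the Choquet--Mokobodzki barycentric calculus for affine functions of the first Baire class.

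Granting that formula, the fallback can be completed as follows.
\begin{enumerate}[(a)]
\item Take $\phi_n\in C(K)$ with $\|\phi_n\|_\infty\leqslant 1$ and $\phi_n\to\widehat{x^{**}}$ pointwise.
\item The images of $\phi_n$ tend to $0$ weakly in $C(K)/X$.
\item Mazur's theorem gives convex combinations $\psi_n$ of $\{\phi_k:k\geqslant n\}$ with $\operatorname{dist}(\psi_n,X)<1/n$; these still converge pointwise to $\widehat{x^{**}}$.
\item Choose $x_n\in X$ with $\|\psi_n-\hat x_n\|_\infty<1/n$ and rescale by $(1+1/n)^{-1}$ to get the required sequence in $B_X$.
\end{enumerate}
The first stage of the fallback (Baire one via the independence/tree construction) is fine in outline; it is Rosenthal's proof of (i)$\Rightarrow$(ii).
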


Many of our results depend on the following properties of Banach space $X$. The first one is "$X$ does not contain an isomorphic copy of $\ell^1$", the other one is "$X^*$ does not contain an isomorphic copy of $\mathfrak{c}_0$". There is a relation between these properties: the second one follow from the first one.
This follows from Theorem \ref{c0l1} below due to Bessaga and Pelczinsky, see \cite[Theorem 10, Page 48]{D}.
\begin{theorem}\label{c0l1}
    A Banach space $X$ contains a complemented copy of $\ell^1$ if and only if $X^*$ contains a copy of $\mathfrak{c}_0$.
\end{theorem}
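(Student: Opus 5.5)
This is the Bessaga–Pełczyński theorem. The paper uses it mainly for the implication ``$X$ contains no copy of $\ell^1$ $\Rightarrow$ $X^*$ contains no copy of $\mathfrak c_0$'', so both directions should be covered.

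\emph{Forward direction.} Suppose $X = L\oplus Z$ with $L\cong\ell^1$ complemented. Then $X^*\cong L^*\oplus Z^*$, and $L^*\cong(\ell^1)^*=\ell^\infty$. Since $\ell^\infty$ contains $\mathfrak c_0$ isometrically, so does $X^*$. This is routine.

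\emph{Converse.} Let $T:\mathfrak c_0\to X^*$ be an isomorphic embedding and put $x_n^*=Te_n$. I will carry out the following steps in order.

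First, I will show that $\sum_n x_n^*$ is weakly unconditionally Cauchy in $X^*$ but not unconditionally convergent. It is w.u.C.\ because $\sum|\scal{e_n}{\xi}|<\infty$ for $\xi\in\ell^1=\mathfrak c_0^*$, and $T^*$ maps $X^{**}$ into $\ell^1$. It is not unconditionally convergent because $\|x_n^*\|\geqslant\|T^{-1}\|^{-1}>0$.

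Second, I will restrict to $X$. By Lemma \ref{ell1slslz}, $\sum_n|\scal{x}{x_n^*}|<\infty$ for every $x\in X$. Hence the operator $S:X\to\ell^1$, $Sx=(\scal{x}{x_n^*})_n$, is well defined. It is bounded by the closed graph theorem. Its adjoint $S^*:\ell^\infty\to X^*$ satisfies $S^*e_n=x_n^*$.

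Third, I will show that $S$ is not compact. If it were, $S^*$ would be compact, forcing $\|x_n^*\|=\|S^*e_n\|\to0$, a contradiction. More usefully, I will extract a bounded sequence $(x_k)$ in $X$ and indices $n_k$ with $|\scal{x_k}{x_{n_k}^*}|\geqslant\delta$ and the vectors $Sx_k$ essentially disjointly supported in $\ell^1$. To do this, I take $x_k$ nearly norming $x_{n_k}^*$. Then, using $\sum_n|\scal{x}{x_n^*}|<\infty$ for each fixed $x$, I pass to a subsequence with $n_k$ growing fast enough that the tail of $Sx_j$ beyond $n_{k}$ is small for $j<k$. This is a gliding hump argument.

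Fourth, I will apply the standard perturbation lemma. A seminormalized block sequence in $\ell^1$ is equivalent to the unit vector basis and spans a $1$-complemented subspace. Since $\ell^1$ has the Schur property, $(Sx_k)$ can be taken close to such a block sequence after a small perturbation. Then $\|\sum a_kx_k\|\geqslant\|S\|^{-1}\|\sum a_kSx_k\|\gtrsim\sum|a_k|$, and trivially $\|\sum a_kx_k\|\leqslant C\sum|a_k|$. So $W=\overline{\mathrm{span}}(x_k)\cong\ell^1$. If $P$ is a bounded projection of $\ell^1$ onto $\overline{\mathrm{span}}(Sx_k)$, then $(S|_W)^{-1}PS$ is a bounded projection of $X$ onto $W$. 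This gives a complemented copy of $\ell^1$ in $X$.

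\emph{Main obstacle.} The hard part is the third step: producing the disjoint-hump sequence in $\ell^1$ while keeping the humps $\delta$-large. If the direct gliding hump proves messy, I will instead argue as follows. Suppose every bounded sequence had $(Sx_k)$ relatively compact in $\ell^1$, i.e.\ $S$ maps bounded sets into sets with uniformly small tails. Then $S^*$ maps $(e_n)$ to a norm-null sequence, which contradicts the lower bound on $\|x_n^*\|$. The failure of uniform tail smallness then yields the humps directly.
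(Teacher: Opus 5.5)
The paper does not prove this theorem; it only quotes it from Diestel's book. Your plan for the nontrivial direction is the standard one. The forward direction, the operator $S\colon X\to\ell^1$ with $S^*e_n=x_n^*$, its non-compactness, and the final projection $(S|_W)^{-1}PS$ are all fine. The genuine gap is in the third step.

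Your choice of the $n_k$ controls only the tails of the \emph{earlier} vectors $Sx_j$ beyond the later hump positions. It places no restriction on the \emph{later} vectors $Sx_k$ on the initial coordinates, where the earlier humps sit. So $(Sx_k)$ need not be close to a block sequence, and the Schur property does nothing to repair this.

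For a concrete failure, take $X=\ell^1$ and $T$ the inclusion $\mathfrak{c}_0\subset\ell^\infty$, so that $S$ is the identity and $x_n^*=e_n$.
\begin{itemize}
\item The vectors $x_1=\frac12e_1$ and $x_m=\frac12(e_m-e_{m-1})$ have norm at most $1$ and satisfy $|\scal{x_m}{x_m^*}|=\frac12$. Each earlier vector vanishes from every later hump onward, so $n_k=k$ meets your requirement. Yet $\|\sum_{k\leqslant N}x_k\|=\|\tfrac12e_N\|=\frac12$, so $(x_k)$ is not even equivalent to the unit vector basis of $\ell^1$.
\item Likewise $x_k=\frac12(e_1+e_k)$ meets your requirement. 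But no subsequence of $(Sx_k)$ is a small perturbation of a block sequence, because every approximating block would have to be nonzero in the first coordinate.
\end{itemize}
Your fallback paragraph has the same defect: failure of uniformly small tails gives large tails, not small heads.

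What is missing is a step that makes the images coordinatewise null before the gliding hump is run.
\begin{itemize}
\item After your tail selection, pass by a diagonal argument to a further subsequence along which $Sx_k$ converges coordinatewise. The tail condition survives passing to subsequences.
\item Replace the $x_k$ by the bounded differences $z_j=x_{k_{2j}}-x_{k_{2j-1}}$. Then $(Sz_j)$ is coordinatewise null.
\item Your tail condition is exactly what keeps the hump alive: $|\scal{z_j}{x^*_{n_{k_{2j}}}}|\geqslant\delta-\varepsilon_j$. This holds because $|\scal{x_{k_{2j-1}}}{x^*_{n_{k_{2j}}}}|$ is bounded by the tail of the earlier vector $Sx_{k_{2j-1}}$ beyond $n_{k_{2j}}$.
\end{itemize}
Now $(Sz_j)$ is a bounded, coordinatewise null sequence in $\ell^1$ whose norms stay away from $0$. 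By the two-sided gliding hump (the Bessaga--Pe{\l}czy\'nski selection principle), it has a subsequence that is a small perturbation of a seminormalized block sequence. With $(z_j)$ in place of $(x_k)$, your fourth step then goes through verbatim and produces the complemented copy of $\ell^1$ in $X$.
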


Let us review basic definitions and notation from the theory of vector measures, for more details we refer to a standard reference \cite{DU}. 
       A mapping $\nu\colon\M\rightarrow X$ is a \textit{vector measure} if $\nu(\emptyset)=0$ and $\nu(A\sqcup B)=\nu(A)+\nu(B)$ for disjoint sets $A,B\in\M.$
By induction, every vector measure is finitely additive. If, in addition, for every sequence $(E_n)_{n=1}^\iii$ of disjoint sets in $\M$ we have $\nu(\bigsqcup_{n=1}^\iii E_n)=\sum_{n=1}^{\infty}\nu(E_n)$ in the norm of $X$, we say that $\nu$ is \textit{countably additive} (or $\sigma$-additive).
Moreover, we say that a family of vector measures $\mu_\lambda\colon\M\to X$ for $\lambda\in\Lambda$ is \textit{uniformly  countably additive} if
\begin{equation}\label{UCAdef}
\lim_{n\rightarrow\iii}\sup_{\lambda\in\Lambda}\left\|\mu_\lambda\left(\bigsqcup_{k=n+1}^\iii E_k\right)\right\|_X = 0,
\end{equation}
for all sequences of disjoint subsets $(E_n)_{n=1}^\iii$. Equivalent condition is:
\begin{equation}\label{UCAekv}
\lim_{n\rightarrow\iii}\sup_{\lambda\in\Lambda}\|\mu_\lambda(F_n)\|_X=0
\end{equation}
for all decreasing sequences $(F_n)_{n=1}^\iii$ of measurable subsets such that $F_n\downarrow\emptyset$.

Since we are interested in integration with respect to a (positive countably additive) measure $\mu$, we recall the following standard definition.
    Let $(\WWW,\M,\mu)$ be a measure space and $\nu\colon\M\rightarrow X$ be a vector measure. It is said that $\nu$ is \textit{$\mu$-continuous}, denoted by $\nu\ll\mu,$ if for every $\varepsilon>0$ there exists $\delta>0$ such that $E\in\M$ and $\mu(E)<\delta$ imply $\|\nu(E)\|_X < \varepsilon$.
Every $\mu$-continuous vector measure obviously vanishes on sets of $\mu$-measure zero, while the converse does not hold in general. 
We note that the assumption $\mu(\Omega) < +\infty$ is not needed in the proof of the following theorem, which can be found in \cite[Theorem~1, Page 10]{DU}.
\begin{theorem}\label{petismu}
Let $(\WWW,\M,\mu)$ be a measure space and $\nu\colon\M\rightarrow X$ be a countably additive vector measure. Then, $\nu\ll\mu$ if and only if $\nu$ vanishes on sets of $\mu$-measure zero.
\end{theorem}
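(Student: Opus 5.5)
One direction is trivial. If $\nu\ll\mu$ and $\mu(E)=0$, then $\mu(E)<\delta$ for every $\delta>0$. Hence $\|\nu(E)\|_X<\varepsilon$ for every $\varepsilon>0$, so $\nu(E)=0$.

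For the converse, assume $\nu$ is countably additive and vanishes on $\mu$-null sets, and argue by contradiction. Suppose there are $\varepsilon>0$ and sets $E_n\in\M$ with $\mu(E_n)<2^{-n}$ and $\|\nu(E_n)\|_X\geqslant\varepsilon$. Put $F_n=\bigcup_{k\geqslant n}E_k$ and $F=\bigcap_n F_n$. Then $\mu(F_n)\leqslant 2^{1-n}$, so $\mu(F)=0$. The finiteness of $\mu(\Omega)$ is never used, since only $\mu(F_n)<\infty$ matters, and that holds automatically.

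To get a contradiction we need to control $\nu$ on arbitrary measurable subsets of $F_n$, not only on $F_n$ itself. We use the semivariation $\|\nu\|(A)=\sup_{\|x^*\|\leqslant1}|x^*\nu|(A)$ and the standard fact that a countably additive vector measure has finite semivariation, with $\|\nu\|(A)\leqslant 4\sup_{B\subseteq A}\|\nu(B)\|_X$. Since $E_n\subseteq F_n$, this gives $\|\nu\|(F_n)\geqslant\|\nu(E_n)\|_X\geqslant\varepsilon$ for all $n$.

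The main step is the claim that for a countably additive $\nu$ and a decreasing sequence $G_n\downarrow\emptyset$ we have $\|\nu\|(G_n)\to0$. This is a continuity property of the semivariation. One route is through the family $\{x^*\nu:\|x^*\|\leqslant1\}$: by the Bartle--Dunford--Schwartz / Pettis theorem this family is uniformly countably additive, in the sense of \eqref{UCAekv}, and applying this to subsets of $G_n$ gives the claim. Alternatively, one can argue directly. If the claim failed, one could pick $B_n\subseteq G_{m_n}\setminus G_{m_{n+1}}$ with $\|\nu(B_n)\|_X\geqslant\varepsilon/4$, pairwise disjoint, which contradicts convergence of $\sum_n\nu(B_n)$ from countable additivity.

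Apply the claim to $G_n=F_n\setminus F\downarrow\emptyset$ to get $\|\nu\|(F_n\setminus F)\to0$. Since every measurable subset of $F$ is $\mu$-null, $\nu$ vanishes on all of them, so $\|\nu\|(F)=0$. Therefore $\|\nu\|(F_n)\leqslant\|\nu\|(F_n\setminus F)+\|\nu\|(F)\to0$, contradicting $\|\nu\|(F_n)\geqslant\varepsilon$. The direct argument for the claim is the only delicate point. It needs care in choosing the indices $m_n$ so that the $B_n$ are disjoint, which uses countable additivity on the pieces $G_m\setminus G_{m'}$.
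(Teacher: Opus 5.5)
Your proof is correct and follows the standard proof of Pettis' theorem, the Diestel--Uhl argument the paper cites: the $\limsup$ set $F=\bigcap_n F_n$ is $\mu$-null, countable additivity makes the semivariation continuous from above at $\emptyset$, and only $\mu(F_n)\leqslant 2^{1-n}$ is used, which confirms the paper's remark that $\mu(\Omega)<+\infty$ is unnecessary. One point to tighten: uniform countable additivity of $\{x^*\nu:\|x^*\|\leqslant 1\}$ follows immediately from countable additivity of $\nu$ but does not by itself control arbitrary subsets of $G_n$, so the key claim actually rests on your disjointification argument, which works once $\varepsilon/4$ is replaced by, say, $\varepsilon/8$ (the supremum need not be attained and cutting off $B\cap G_{m_{n+1}}$ loses a little).
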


Let us review notions of measurability of functions taking values in $X$ or $X^\ast$. 
Recall that $\varphi : \Omega \to X$ is said to be \textit{strongly $\mu$-measurable} if there is a sequence $s_n : \Omega \to X$ of $\mu$-simple functions such that $\lim_{n \to \infty} s_n(t) = \varphi (t)$ for $\mu$-a.\,e. $t\in\Omega$. 
Here $\mu$-simple function means a function $s$ of the form $s = \sum_{k=1}^m x_k \chi_{A_k}$, where $(A_k)_{k=1}^m$ is a sequence of disjoint $\mu$-measurable subsets of $\Omega$ of finite $\mu$ measure and $(x_k)_{k=1}^m$ is a sequence in $X$. The vector space of all strongly $\mu$-measurable functions is denoted by 
$\mathcal{M}(\WWW,\M,\mu,X)$, with the usual convention of identification of a.\,e. equal functions. It is clear that for every strongly $\mu$-measurable $\varphi : \Omega \to X$ the set $\{ t \in \Omega : \varphi(t) \not= 0 \}$, called support of $\varphi$, has $\sigma$-finite measure. If $X=\C$, we refer to such functions as $\mu$-measurable. 
For every strongly $\mu$-measurable function $\varphi$, the mapping $\Omega\ni t\mapsto\|\varphi(t)\|_X\in\R$ is $\mu$-measurable.
A function $\varphi : \Omega \to X$ (resp. $\varphi : \Omega \to X^\ast$) is said to be \textit{weakly}
(resp. \textit{weakly$^*$}) \textit{$\mu$-measurable} if for every $x^\ast \in X^\ast$ (resp. $x \in X$) the function $\Omega\ni t \mapsto \langle
\varphi (t), x^\ast \rangle\in\C$ (resp. $\Omega\ni t \mapsto \langle x, \varphi (t) \rangle\in\C$) is $\mu$-measurable.
We denote these functions by $\langle \varphi, x^\ast \rangle$ (resp. $\langle x, \varphi \rangle$).

In principle, there are two ways to identify functions $\varphi, \psi : \Omega \to X^*$ (resp. $X$). The first one is the usual a.\,e. concept: there is a null set $S \subset \Omega$ such that $\varphi(t) = \psi(t)$ for all $t \in \Omega\setminus S$. 
The second one is that for every $x \in X$ (resp. $x^* \in X^*$) there is a null set $S_x \subset \Omega$ (resp. $S_{x*} \subset \Omega$) such that $\langle x, \varphi(t) \rangle = \langle x, \psi(t) \rangle$ for all $t \in \Omega \setminus S_x$ (resp. $\langle \varphi(t), x^* \rangle = \langle \psi(t), x^* \rangle$ for all $t \in \Omega \setminus S_{x*}$).
If the space $X$ is separable, these two types of identifications are equivalent.

The following theorem is a basic result concerning connection of strong and weak $\mu$-measurability. For more details see \cite[Theorem~1.1.20.]{ABS}.
\begin{theorem}[Pettis Measurability Theorem]\label{PMT}
    A function $\varphi : \Omega \to X$ is strongly $\mu$-measurable if and only if  $\varphi$ is $\mu$-weakly measurable and there exist $E\in\M$ such that $\mu(E)=0$ and $\varphi (\Omega\setminus E) \subset X$ is separable.
\end{theorem}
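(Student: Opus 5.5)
The statement to prove is the Pettis Measurability Theorem (Theorem \ref{PMT}). It has two directions, and I will treat them separately.

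\emph{Necessity.} Suppose $\varphi$ is strongly $\mu$-measurable, so there are $\mu$-simple $s_n$ with $s_n(t)\to\varphi(t)$ off a null set $E$.
\begin{itemize}
\item For each $x^*\in X^*$, the scalar functions $\langle s_n,x^*\rangle$ are simple and converge a.e. to $\langle\varphi,x^*\rangle$. Since $\mu$ is complete, $\langle\varphi,x^*\rangle$ is $\mu$-measurable, so $\varphi$ is weakly $\mu$-measurable.
\item Each $s_n$ takes finitely many values, so the closed linear span $X_0$ of all values of all $s_n$ is separable.
\item For $t\notin E$, $\varphi(t)$ is a limit of the $s_n(t)$, hence lies in $X_0$. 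Thus $\varphi(\Omega\setminus E)$ is separable.
\end{itemize}

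\emph{Sufficiency, reduction to a separable space.} Redefine $\varphi$ to be $0$ on the null set $E$. We may then assume $\varphi(\Omega)\subset X_0$, the closed span of $\varphi(\Omega\setminus E)$, which is separable. Pick a dense sequence $(x_k)$ in $X_0$. By Hahn--Banach there is a norming sequence $(x_j^*)$ in $B_{X^*}$ with $\|x\|=\sup_j|\langle x,x_j^*\rangle|$ for all $x\in X_0$; it is obtained by taking norming functionals for each $x_k$ and using density. This gives two consequences.
\begin{itemize}
\item For every $x\in X_0$, the function $t\mapsto\|\varphi(t)-x\|=\sup_j|\langle\varphi(t),x_j^*\rangle-\langle x,x_j^*\rangle|$ is a countable supremum of measurable functions, hence $\mu$-measurable.
\item The set $\{t:\varphi(t)\neq0\}=\bigcup_j\{\langle\varphi,x_j^*\rangle\neq0\}$ is measurable.
\end{itemize}

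\emph{Sufficiency, construction of countably-valued approximants.} For each $n$, define $\psi_n(t)=x_k$, where $k$ is the least index with $\|\varphi(t)-x_k\|<1/n$. Density of $(x_k)$ in $X_0$ guarantees such a $k$ exists for every $t$. Each level set of $\psi_n$ is measurable, since it is built from the measurable sets $\{\|\varphi-x_k\|<1/n\}$ by finitely many set operations. So $\psi_n$ is countably valued and measurable, with $\|\psi_n-\varphi\|<1/n$ everywhere.

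\emph{Sufficiency, truncation to simple functions (the main obstacle).} The $\psi_n$ are not yet $\mu$-simple: they take countably many values, and their level sets may have infinite measure. I expect this step to need the most care, because $\mu$ need not be $\sigma$-finite. The support of $\varphi$ must have $\sigma$-finite measure for strong measurability to hold at all, so I first extract that from the hypotheses. Write $\{\varphi\neq0\}=\bigcup_m\{\|\varphi\|>1/m\}$; these are measurable sets by the first bullet above with $x=0$. The hypotheses as stated do not obviously force each $\{\|\varphi\|>1/m\}$ to have $\sigma$-finite measure. I would therefore either invoke the convention of \cite{ABS} that $\mu$ is $\sigma$-finite (or that it is restricted to a $\sigma$-finite set containing the support), or note that the cited result is understood in that setting. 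Granting a $\sigma$-finite exhaustion $\Omega_1\subset\Omega_2\subset\dots$ of the support by sets of finite measure, define
\begin{equation*}
s_n=\psi_n\,\chi_{\Omega_n\cap\{\varphi\neq0\}\cap\{\psi_n\in\{x_1,\dots,x_{N_n}\}\}},
\end{equation*}
choosing $N_n$ so large that the measure of the part of $\Omega_n$ where $\psi_n$ takes a value outside $\{x_1,\dots,x_{N_n}\}$ is less than $2^{-n}$. Each $s_n$ is $\mu$-simple. By Borel--Cantelli, almost every $t$ is excluded from these small exceptional sets for all large $n$, and for such $t$ we have $s_n(t)\to\varphi(t)$. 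Hence $s_n\to\varphi$ a.e., which proves that $\varphi$ is strongly $\mu$-measurable.
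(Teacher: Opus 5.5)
The paper gives no proof of this theorem; it quotes it from \cite[Theorem~1.1.20]{ABS}. Most of your argument is correct: the necessity direction, the reduction to the separable subspace $X_0$ with a norming sequence $(x_j^*)$, the countably valued approximants $\psi_n$, and the truncation with Borel--Cantelli all work \emph{once} the support $\{\varphi\neq0\}$ is known to be $\sigma$-finite. The gap is at the point you flagged yourself. You propose to import $\sigma$-finiteness of $\mu$ as a convention, but the paper makes no such assumption. It works with arbitrary complete measures and even exhibits non-$\sigma$-finite supports in Example~\ref{longline}. As written, your sufficiency direction therefore proves a weaker statement with an extra hypothesis.

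The missing idea is that $\sigma$-finiteness already follows from the hypotheses, because of how the paper defines scalar measurability. A function $\Omega\to\mathbb{C}$ is called $\mu$-measurable if it is strongly $\mu$-measurable, i.e.\ an a.e.\ limit of $\mu$-simple functions $g_n$. For such a $g$ you have $\{g\neq0\}\subset N\cup\bigcup_n\{g_n\neq0\}$, where $\mu(N)=0$ and $\mu(\{g_n\neq0\})<+\infty$. Using completeness, $\{g\neq0\}$ is then $\sigma$-finite. Apply this to $g=\langle\varphi,x_j^*\rangle$ and use the identity $\{\varphi\neq0\}=\bigcup_j\{\langle\varphi,x_j^*\rangle\neq0\}$, which you already wrote down and which holds after your modification on $E$. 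The support of $\varphi$ is then a countable union of $\sigma$-finite sets, hence $\sigma$-finite, and your construction goes through verbatim.

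This step cannot be bypassed by a cleverer construction. If scalar $\mu$-measurability only meant $\M$-measurability, the sufficiency direction would be false. Take a one-point space $\{p\}$ with $\mu(\{p\})=+\infty$ and $\varphi\equiv x\neq0$: it is weakly measurable and separably valued, but the only $\mu$-simple function is $0$.
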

If $X$ is separable, then for every weakly $\mu$-measurable $\varphi : \Omega \to X$ the function $\Omega\ni t \mapsto \| \varphi(t) \|_X \in\R$ is $\mu$-measurable. Indeed, by separability of $X$ and Theorem \ref{PMT}, $\varphi$ is strongly $\mu$-measurable and thus  $\Omega\ni t \mapsto \| \varphi(t) \|_X\in\R$ is $\mu$-measurable.  
Similarly, if $\varphi : \Omega \to X^\ast$ is weakly$^*$ $\mu$-measurable, then $\Omega\ni t \mapsto \| \varphi(t) \|_{X^*} \in\R$ is $\mu$-measurable function. Indeed, $\| \varphi(t) \|_{X^*} = \sup\{| \langle x, \varphi(t) \rangle |:x\in S\},$ where $S$ is a countable dense subset of the unit sphere in $X$. 

Next we review notions of integrability of functions taking values in $X$ or $X^\ast$. 
If for a weakly $\mu$-measurable function $\varphi : \Omega \to X$ we have $\langle \varphi, x^\ast \rangle\in L^1(\Omega, \mu)$ for every $x^\ast$ in $X^\ast$, then it is said that $\varphi$ is \textit{Dunford integrable} and we denote the vector space of all Dunford integrable $X$-valued functions by $\mathbb D(\Omega, \M, \mu, X)$. 
In that case for every $E\in\M$ there is a unique element $\varphi_E$ in $X^{\ast \ast}$ such that 
$\langle x^\ast, \varphi_E \rangle = \int_E \langle \varphi, x^\ast \rangle \,d \mu$ for every $x^\ast$ in $X^\ast$, and we write 
$\varphi_E = \prescript{\mathrlap{D}\,\,}{}{\int_{E}} \varphi \, d\mu$.
It may happen (and it always happens if $X$ is reflexive) that $\varphi_E$ is in $X$ for every measurable $E$. In that case it is said that $\varphi$ is \textit{Pettis integrable} and we write 
$\varphi_E = \prescript{\mathrlap{P}\,\,}{}{\int_{E}} \varphi \, d\mu$.
The space of all Pettis integrable functions $\varphi : \Omega \to X$ is denoted by $\PP(\Omega, \M, \mu, X)$, where $\varphi$ is identified with $\psi$ whenever $\langle \varphi, x^* \rangle = \langle \psi, x^* \rangle$ $\mu$-a.\,e. for all $x^* \in X^*$. A norm is defined on this space by formula:
\begin{equation}\label{Pnorm}
    \| \varphi \|_{\PP} = \sup_{\| x^* \|_{X^*} \leqslant 1} \int_\Omega | \langle \varphi(t), x^* \rangle| \, d\mu(t)
\end{equation}
Analogously, if for a weak$^*$ $\mu$-measurable function $\varphi : \Omega \to X^\ast$ the function $\langle x, \varphi \rangle$ is in $L^1(\Omega, \mu)$ for every $x \in X$, then it is said that $\varphi$ is \textit{Gelfand integrable}.
In that case for every measurable $E$ there is a unique $\varphi^E$ in $X^\ast$ such that for every $x \in X$ we have $\left\langle x, \varphi^E \right\rangle = \int_E \langle x, \varphi \rangle \,d\mu$ and we set 
$ \varphi^E = \prescript{\mathrlap{G}\,\,}{}{\int_{E}} \varphi\, d\mu$. 
The space of all such functions is denoted by $\GG(\WWW,\M,\mu,X^*)$, assuming the following identification: $\varphi \sim_\GG \psi$ if and only if $\langle x, \varphi \rangle = \langle x, \psi \rangle$ $\mu$-almost everywhere for every $x \in X$. 
Then a norm on $\GG(\WWW,\M,\mu,X^*)$ is defined by: 
$$\| \varphi \|_{\GG(\Omega, \M, \mu, X^*)} = \displaystyle\sup_{\| x \|_X = 1} \| \langle x, \varphi \rangle \|_{L^1(\mu)}.$$
If $X$ is separable, the usual a.\,e. identification and $\sim_\GG$ identification coincide. 
For $\varphi\in\GG(\WWW,\M,\mu,X^*)$ we define $\nu_\varphi\colon\M\rightarrow X^*$ by
\begin{equation}\label{meranu}
\nu_\varphi(E) = \int_E \varphi\,d\mu,\qquad E \in \M.
\end{equation}
Clearly $\nu_\varphi$ is a vector measure. Moreover, $\nu_\varphi$ is weakly$^*$ countably additive, i.e.  if $E=\bigsqcup_{n=1}^\iii E_n$ for a disjoint sequence $(E_n)_{n=1}^\iii$ of sets in $\M$, then for all $x\in X$
$$\langle x,\nu_\varphi(E)\rangle=\int_{E}\scal{x}{\varphi(t)}d\mu(t)=\sum_{n=1}^\iii\int_{E_n}\scal{x}{\varphi(t)}d\mu(t)=\sumn\scal{x}{\nu_\varphi(E_n)}.$$
The second equality above follows from integrability of scalar function $\langle x, \varphi \rangle$.  Also, if $\mu(E)=0$, then $\scal{x}{\nu_\varphi(E)} = \int_E\scal{x}{\varphi(t)}d\mu(t) = 0$ for $x\in X$ and hence $\nu_\varphi(E)=0$. Thus, we have the following proposition.
\begin{proposition}\label{wkstcameasure}
For every $\varphi\in\GG(\WWW,\M,\mu,X^*)$ the mapping 
$\nu_\varphi\colon\M\rightarrow X^*$ defined by \eqref{meranu} is a weakly$^*$ countably additive vector measure that vanishes on sets of $\mu$-measure $0$.
\end{proposition}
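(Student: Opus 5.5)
The claim is that for $\varphi\in\GG(\WWW,\M,\mu,X^*)$ the map $\nu_\varphi$ of \eqref{meranu} is a vector measure, is weakly$^*$ countably additive, and vanishes on $\mu$-null sets. I will check these three properties in turn, testing everything against a fixed $x\in X$. Since the elements of $X$ separate the points of $X^*$, this reduces each identity in $X^*$ to a scalar identity, to which the Lebesgue integral applies.

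\textbf{Well-definedness and finite additivity.} For $E\in\M$ and $x\in X$ the function $\langle x,\varphi\rangle\chi_E$ is in $L^1(\mu)$, because $\langle x,\varphi\rangle$ is. The functional
$$x\mapsto \int_E\langle x,\varphi\rangle\,d\mu$$
is linear. I take its boundedness as given by the existence of the Gelfand integral recorded above; the underlying reason is a closed graph argument applied to $X\ni x\mapsto\langle x,\varphi\rangle\in L^1(\mu)$. So $\nu_\varphi(E)\in X^*$ is well defined. Clearly $\nu_\varphi(\emptyset)=0$. For disjoint $A,B\in\M$, additivity of the scalar integral gives
$$\langle x,\nu_\varphi(A\sqcup B)\rangle=\langle x,\nu_\varphi(A)\rangle+\langle x,\nu_\varphi(B)\rangle$$
for every $x\in X$, and hence $\nu_\varphi(A\sqcup B)=\nu_\varphi(A)+\nu_\varphi(B)$.

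\textbf{Weak$^*$ countable additivity.} Let $E=\bigsqcup_{n=1}^\iii E_n$ with the $E_n\in\M$ disjoint, and fix $x\in X$. The partial sums $\langle x,\varphi\rangle\chi_{\bigsqcup_{k\le n}E_k}$ are dominated by $|\langle x,\varphi\rangle|\in L^1(\mu)$. By dominated convergence,
$$\int_E\langle x,\varphi\rangle\,d\mu=\sum_{n=1}^\iii\int_{E_n}\langle x,\varphi\rangle\,d\mu,$$
which is exactly $\langle x,\nu_\varphi(E)\rangle=\sum_{n=1}^\iii\langle x,\nu_\varphi(E_n)\rangle$.

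\textbf{Vanishing on null sets.} If $\mu(E)=0$, then $\int_E\langle x,\varphi\rangle\,d\mu=0$ for every $x\in X$, so $\nu_\varphi(E)=0$.

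None of these steps is a real obstacle. The only non-trivial ingredient is that $\nu_\varphi(E)$ is a bounded functional, which I take from the existence of the Gelfand integral as stated earlier.
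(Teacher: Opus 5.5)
Your proposal is correct and follows the paper's argument: you test against each $x\in X$, use the countable additivity of the scalar integral of the integrable function $\langle x,\varphi\rangle$ (which you justify by dominated convergence), and note that integrals over $\mu$-null sets vanish. Your remark that $\nu_\varphi(E)\in X^*$ is bounded is exactly the existence of the Gelfand integral, which the paper also takes as known.
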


A strongly $\mu$-measurable function $\varphi : \WWW \to X$ is said to be \textit{Bochner $p$-integrable}, for $1 \leqslant p < +\infty$, if $\int_\Omega \| \varphi (t) \|_X^p\, d\mu(t) < +\infty$. These functions form a Banach space $L^p (\Omega, \M, \mu, X)$ with a natural norm, assuming the usual a.\,e. identification. 
Also, the vector space of $\mu$-simple functions is dense in $L^p (\Omega, \M, \mu, X)$.
If $\Omega=\N$ 
then $\ell^p(X)=\{f\colon\N\rightarrow X:\sum_{n=1}^\iii\|f(n)\|_X^p<+\iii\}$ is the space of Bochner $p$-integrable functions, as every $f\colon\N\rightarrow X$ is strongly $\mu$-measurable. 
Functions in $L^1 (\Omega, \M, \mu, X)$ are called \textit{Bochner integrable}.
For every $E \in \M$  and $\varphi$ in $L^1 (\Omega, \M, \mu, X)$ there is a naturally defined element $\prescript{\mathrlap{B}\,\,}{}{\int_{E}} \varphi \, d\mu$ in $X$, the Bochner integral of $\varphi$ over $E$.
Then $\nu_\varphi : \M \to X$ defined by $\nu_\varphi (E) = \prescript{\mathrlap{B}\,\,}{}{\int_{E}} \varphi\,d\mu$ is a countably additive vector measure on $(\Omega, \M)$ which is $\mu$-continuous. 

We alert the reader that there are weaker forms of $p$-integrability in the spirit of Dunford and Pettis, see \cite{jma2018}.
It is easily seen that Bochner integrability implies Pettis integrability, Pettis integrability directly implies Dunford integrability. 
It is also easily seen that Bochner integrability of $\varphi : \Omega \to X^\ast$ imples Gelfand integrability of $\varphi$.
We will often, for simplicity, suppress $\M$ from notation and write, for example, $L^1(\Omega, \mu, X)$ instead of $L^1(\Omega, \M, \mu, X)$.

\subsection{$\mathcalb B(X, Y)$-valued functions}\label{OV}

In this paper we deal with $\BXY$-valued functions, some of the results, examples and counterexamples will be specific to the Hilbert space case, i.\,e. to $\BH$-valued functions. 
For an o.\,v. function (equivalently family) $\AAA = (\AAA_t)_{t\in\Omega}$ in $\mathcalb B(X, Y)$ the family $(\AAA_t^*)_{t\in\Omega}$ in $\mathcalb B(Y^*, X^*)$ will be denoted by $\AAA^*$. 
For $x$ in $X$ the family $(\AAA_t x)_{t \in \Omega}$ in $Y$ is denoted by $\AAA x$.
The pointwise product of families $\AAA$ and $\BBB$ is denoted by $\AAA\BBB$, whenever this is well defined. 
We say a family $\AAA$ is bounded if $\sup_{t\in\Omega}\|\AAA_t\|<+\infty$.
 
A family $(\AAA_t)_{t\in\Omega}$ is said to be \textit{pointwise} (resp. \textit{pointwise weakly}) $\mu$-\textit{measurable} if $\AAA x$ is strongly (resp. weakly) $\mu$-measurable for all $x$ in $X$. 
Two families $\AAA$ and $\BBB$ are said to be \textit{pointwise equivalent} (resp. \textit{pointwise weakly equivalent}) if for every $x$ in $X$ we have $\AAA_t x = \BBB_t x$ a. e.
(resp. for every $(x, y^*) \in X \times Y^*$ we have $\langle \AAA_t x, y^* \rangle = \langle \BBB_t x, y^* \rangle$ a. e.). We denote the vector space of all classes of pointwise equivalent pointwise $\mu$-measurable (pointwise weakly equivalent pointwise weakly $\mu$-measurable) families by $\mathcal M_s(\Omega, \M, \mu, \BXY)$ (resp. $\mathcal M_w(\Omega, \M, \mu, \BXY)$, see \cite{BJN}.
A family $\AAA = (\AAA_t)_{t \in \Omega}$ in $\mathcalb B(X, Y^*)$ is said to be \textit{pointwise weakly}$^*$ $\mu$-\textit{measurable} if $\AAA x$ is weakly$^*$ $\mu$-measurable for every $x \in X$. Two such families $\AAA$ and $\BBB$ are said to be \textit{pointwise weakly}$^*$ \textit{equivalent} if, for every $(x, y) \in X \times Y$ we have $\langle y, \AAA x \rangle = \langle y, \BBB x \rangle$ a.\,e. We denote the space of all equivalence classes of pointwise weakly$^*$ $\mu$-measurable functions by $\mathcal M_{w^*}(\Omega, \M, \mu, \mathcalb B(X, Y^*))$. 
By Theorem \ref{PMT}, if the space $Y$ is separable, we have the equality 
\begin{equation}\label{Mws}
\mathcal M_w(\Omega, \M, \mu, \BXY) = \mathcal M_s(\Omega, \M, \mu, \BXY).
\end{equation}

If $\AAA$ belongs to $\mathcal M_s(\Omega, \M, \mu, \BXY)$ and $X$ is separable, then the function $\Omega\ni t \mapsto \| \AAA_t \|\in\R$ is $\mu$-measurable. 
This follows from $\| \AAA_t \| = \sup_{x\in S} \| \AAA_t x \|_Y$, where $S$ is a countable dense subset of $B_X$.

If $Y$ is separable and does not contain an isomorphic copy of $\ell^1$, then by Theorem \ref{OdellRosenthal} we have
\begin{equation*}
    \mathcal M_{w^*}(\Omega, \M, \mu, \mathcalb B(X, Y^*)) = \mathcal M_w(\Omega, \M, \mu, \mathcalb B(X, Y^*)).
\end{equation*}

We say that $\AAA$ in $\mathcal M_w(\Omega, \M, \mu, \BXY)$ is \textit{pointwise Dunford integrable} if $\AAA x \in \mathbb D(\Omega, \M, \mu, Y)$ for all $x \in X$. 
$ L^1_w(\Omega, \M, \mu, \BXY)$ denotes the vector space of all such classes of weakly pointwise equivalent functions, it was defined in \cite{BJN}, see also \cite{MAMK} where notation $L^1_{pD}$ instead of $L^1_w$ was used. 
For every $\AAA$ in $L^1_w(\Omega, \M, \mu, \BXY)$ and $E \in \M$ there is $\int_E \AAA \, d\mu$ in $\mathcalb B(X, Y^{**})$ defined by:
\begin{equation*}\label{pDint}
\left( \int_E \AAA \, d\mu\right) x = \prescript{\mathrlap{D}}{}{\int_{E}} \AAA x \, d\mu, \qquad x \in X,
  \end{equation*}
(see \cite[Proposition~2.9]{MAMK} for details). Finally, we define
\begin{equation*}
\begin{split}
 &L^1_{w^*}(\Omega, \M, \mu, \mathcalb B(X, Y^*)) = \\ & = \{ \AAA \in \mathcal M_{w^*}(\Omega, \M, \mu, \mathcalb B(X, Y^*)): \langle y,\AAA x \rangle \in L^1(\mu), \,\,\forall(x, y) \in X \times Y \}.
 \end{split}
\end{equation*}
Due to duality $\left(X\widehat{\otimes}_\pi Y\right)^*\cong\mathcalb{B}(X,Y^*)$, see \cite[Chapter 3.2]{DF}, it turns out that 
$$L^1_{w^*}(\Omega, \M, \mu, \mathcalb B(X, Y^*))=\GG(\Omega, \M, \mu, \mathcalb B(X, Y^*)).$$

\subsection{$\BH$-valued functions}\label{OVH}

Note that $\ccj^*\cong\BH$ and $\ccc_\iii (\HH)^* = \ccc_1(\HH)$, where duality is trace duality. Hence $\AAA\colon\WWW\rightarrow\BH$ is weakly$^*$ $\mu$-measurable (Gelfand integrable) if and only if function $\Omega\ni t\mapsto\tr(\AAA_t X)\in\C$ is $\mu$-measurable (integrable) for all $X$ in $\ccj$. 
Identification a.\,e. and identification using functions $\Omega\ni t\mapsto\tr(\AAA_tX)\in\C$ lead to the same equivalence relation on $\GG(\WWW,\M,\mu,\BH)$ because $\ccj$ is separable, see Subsection \ref{Pgr}.
In order to prove weak$^*$ $\mu$-measurability (Gelfand integrability) it suffices to show $\mu$-measurability (integrability) of the induced sesquilinear form, i.\,e. functions $\Omega\ni t\mapsto\langle\AAA_tf,g\rangle\in\C$, for all $f,g\in\HH$ (see \cite[Lemma~1.1]{MMS}). 
Thus, we have 
$$\GG(\WWW,\M,\mu,\BH) = L^1_w(\Omega, \M, \mu, \BH).$$
Also, by polarization identity, it suffices to check $\mu$-measurability (integrability) of  $\Omega\ni t \mapsto \langle \AAA_t f, f \rangle\in\C$ for all $f$ in $\HH$. The function 
$\AAA : \Omega \to \BH$ is weakly$^*$ $\mu$-measurable (Gelfand integrable) if and only if $\AAA^\ast$ is weakly$^*$ $\mu$-measurable (Gelfand integrable).
By Theorem \ref{PMT}, weak$^*$ $\mu$-measurability of $\AAA$ is  equivalent to the strong $\mu$-measurability of $\AAA f: \Omega \to \HH$ for every $f \in \HH$.
Furthermore, as already noted in Subsection \ref{OV}, for
every such o.\,v. function, the mapping $\Omega\ni t\mapsto\|\AAA_t\|\in\R$ is $\mu$-measurable (see also \cite[Page~322]{J05}). 

\section{Preliminaries on $L^p_s(\Omega, \M, \mu, \mathcalb B(X, Y))$ spaces}

The spaces appearing in the title of this section were introduced in \cite{BJN}, under the additional assumptions of finiteness of the measure $\mu$ and finiteness of expression in \eqref{pBnorm};  see the discussion in that paper after Proposition 3.1. Here we revisit these spaces and collect several elementary and auxiliary results concerning them, which will serve as a preparation for the main results proved in Section \ref{MAIN}. 

\begin{definition}\label{OsnovnaDef}
Let $1\leqslant p<+\iii$ and $\AAA \colon \WWW \rightarrow \mathcalb B(X, Y)$ be a pointwise $\mu$-measurable function. 
We say that $\AAA$ is strongly $p$-integrable if  for all $x \in X$ holds $\int_\WWW \|\AAA_t x \|_Y^p\,d\mu(t) < +\iii$. The space of all such functions is denoted by $L^p_s(\Omega, \M, \mu, \mathcalb B(X, Y))$, with the identification of pointwise equivalent functions.
\end{definition}

If $\WWW=\N$ and $\mu$ is the counting measure on $\M=\mathcal{P}(\N),$ the corresponding space of strongly $p$-integrable functions is denoted by $\ell^p_s(\mathcalb B(X, Y))$ and this space is a Banach space (see Proposition \ref{discr} below).  Also,
if $p=1$ we say that $\AAA$ is \textit{strongly integrable}. In that case for every $E \in \M$ there is an operator $\int_E \AAA \, d\mu \in \mathcalb B(X, Y)$ defined by the formula
\begin{equation}\label{Ulazakx}
\left( \int_E \AAA d\mu \right) x = \prescript{\mathrlap{B}}{}{\int_{E}} \AAA_t x \,d\mu(t)\,\,\,
\text{ for every}\,\,\, x \in X.
\end{equation}
Moreover, for $\AAA\in L^1_s(\WWW,\M,\mu,\BXYz)$ and every $x\in X$ and $y\in Y$ we have $|\scal{y}{\AAA_tx}|\leqslant\|\AAA_tx\|_{Y^*}\cdot\|y\|_Y$, hence $\AAA\in\GG(\WWW,\M,\mu,\BXYz)$. In this case
$$\scal{y}{\left( \int_E \AAA\, d\mu \right) x}=\int_E \scal{y}{\AAA_t x}\, d\mu(t) = 
\scal{y}{\prescript{\mathrlap{B}}{}{\int_{E}} \AAA_t x\,d\mu(t)}, \qquad E \in \M.$$



We record here, for future use, the following simple lemma. 



\begin{lemma}\label{sigfin}
    If $X$ is separable, then every $\AAA$ in $L_s^p(\Omega,\mu,\BXY)$ vanishes outside a set of $\sigma$-finite measure.
\end{lemma}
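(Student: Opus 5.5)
The plan is to use a countable dense subset of $X$ to reduce the statement to countably many scalar-valued functions, each of which is handled by Bochner $p$-integrability. Since $X$ is separable, I fix a countable dense set $\{x_k\}_{k=1}^\iii\subset X$. For each $k$ the function $\AAA x_k\colon\Omega\to Y$ is strongly $\mu$-measurable and $\int_\Omega\|\AAA_t x_k\|_Y^p\,d\mu(t)<+\iii$.

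First I record the support of each of these functions. By the remark in Subsection \ref{Pgr}, the support of a strongly $\mu$-measurable function is $\sigma$-finite. Alternatively, $N_k=\{t:\AAA_t x_k\neq 0\}=\bigcup_{m}\{t:\|\AAA_t x_k\|_Y>1/m\}$, and each of these sets has finite measure by Chebyshev's inequality applied to $\|\AAA x_k\|_Y^p\in L^1(\mu)$. Either way, $N_k$ is measurable (up to a null set, using completeness of $\mu$) and of $\sigma$-finite measure. I then set $N=\bigcup_k N_k$, a countable union of $\sigma$-finite sets, which is again $\sigma$-finite.

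Next I show $\AAA_t=0$ for every $t\in\Omega\setminus N$. For such $t$ we have $\AAA_t x_k=0$ for all $k$. Since $\AAA_t$ is a bounded operator and $\{x_k\}$ is dense in $X$, it follows that $\AAA_t=0$. Hence $\AAA$ vanishes outside $N$.

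The only delicate point is the identification of pointwise equivalent functions. The statement should be read as saying that some (equivalently, every) representative vanishes outside a $\sigma$-finite set up to null sets, and this holds because changing a representative alters each $\AAA x_k$ only on a null set, and countably many null sets are absorbed. I expect no real obstacle here: the whole argument rests on density together with countability.
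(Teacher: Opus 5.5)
Your proposal is correct and follows essentially the same route as the paper. Both arguments take a countable dense set, use that each $\AAA x_k$ has $\sigma$-finite support, take the countable union, and conclude $\AAA_t=0$ off that union by density and continuity of $\AAA_t$. Your Chebyshev argument via $\|\AAA x_k\|_Y^p\in L^1(\mu)$ is, if anything, slightly more careful than the paper's text, which invokes $\AAA x\in L^1(\Omega,\mu,Y)$ even though only $L^p$-integrability is given.
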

\begin{proof}
    We choose a countable dense subset $S$ of $B_X$, since $\AAA x \in L^1(\Omega, \mu, Y)$ for every $x \in S$, there is a set $E_x$ of $\sigma$-finite measure such that $\AAA x$ vanishes outside $E_x$. 
    It is easily seen that $\bigcup_{x \in S} E_x$ is a required set of $\sigma$-finite measure.
\end{proof}
Example \ref{longline} below shows that separability of the space $X$ in Lemma \ref{sigfin} is essential, even if we work with Hilbert spaces only.
\begin{example}\label{longline}
    Let $\Lambda$ be an uncountable set, let $I_\lambda = [0, 1] \times \{ \lambda \}$ for $\lambda \in \Lambda$ and set $\Omega = \bigsqcup_{\lambda \in \Lambda} I_\lambda$. We transfer Lebesgue $\sigma$-algebra $\mathcal L$ from $[0,1]$ to $\sigma$-algebra $\mathcal L_\lambda$ on $I_\lambda$ and transfer Lebesgue measure $m$ from $[0, 1]$ to measure $m_\lambda$ on $I_\lambda$. 
    Then we have a $\sigma$-algebra $\M = \{ E \subset \Omega : E \cap I_\lambda \in \mathcal L_\lambda, \lambda \in \Lambda \}$ on $\Omega$ and a measure $\mu$ on $\M$ defined by $\mu(E) = \sum_{\lambda \in \Lambda} m_\lambda (E \cap I_\lambda)$ for all $E\in\M$. 
    Let $1 \leqslant p < + \infty$. Then $X = \ell^p(\Omega)$ is a non separable Banach space, with a natural family of unit vectors $\{e_{t, \lambda}:0 \leqslant t \leqslant 1,\lambda \in \Lambda\}$. We define, for $(t, \lambda) \in \Omega$, $\AAA_{t, \lambda}\in\mathcalb B(X)$ by $\AAA_{t, \lambda} x = x(t, \lambda) e_{t, \lambda}$. 
    For a given $x \in X$ there are only countably many $(t, \lambda) \in \Omega$ such that $\AAA_{t, \lambda} x \not= 0$, and therefore $\AAA x$ is a function on $\Omega$ that vanishes outside a countable set and hence $\AAA x$ is $\mu$-Bochner integrable. This means that $\AAA$ is in $L^1_s(\Omega, \M, \mu,
    \mathcalb B(X))$. However, $\| \AAA_{t, \lambda} \| = 1$ for all $(t, \lambda) \in \Omega$ and therefore $\AAA$ does not have $\sigma$-finite support. $\hfill\triangle$
\end{example}
Note that $\| \AAA \|_1 = 0$ in the above example, compare with Corollary \ref{sigfinl1}.

The next result is a basic property of $L^p_s(\Omega, \M, \mu, \mathcalb B(X, Y))$ spaces.  

\begin{theorem}\label{CGT}
Let $1 \leqslant p<+\iii$ and assume that the function $\AAA \colon \WWW \rightarrow \mathcalb B(X, Y)$ is strongly $p$-integrable. Then we have  
$$\sup_{\| x \|_X=1} \int_\WWW \|\AAA_t x \|_Y^p\,d\mu(t) < +\infty.$$
\end{theorem}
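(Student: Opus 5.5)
The statement is a uniform boundedness principle, and I would prove it with the closed graph theorem. Consider the linear map
$$T\colon X\to L^p(\WWW,\M,\mu,Y),\qquad Tx=\AAA x=(\AAA_t x)_{t\in\WWW}.$$
By hypothesis, $\AAA x$ is strongly $\mu$-measurable and $\int_\WWW\|\AAA_t x\|_Y^p\,d\mu(t)<+\iii$. Hence $Tx$ is a genuine element of the Banach space $L^p(\WWW,\mu,Y)$, with the usual a.\,e. identification, and $T$ is clearly linear. The claimed inequality is exactly $\|T\|^p<+\iii$. Since $X$ and $L^p(\WWW,\mu,Y)$ are both Banach spaces, it suffices to show that $T$ has closed graph.

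To check the closed graph, take $x_n\to x$ in $X$ with $Tx_n\to\varphi$ in $L^p(\WWW,\mu,Y)$.
\begin{enumerate}
\item By the Riesz-type fact for Bochner $L^p$ (the usual proof via absolutely summable subsequences works verbatim for $Y$-valued functions), there is a subsequence $(x_{n_k})_{k=1}^\iii$ with $\AAA_t x_{n_k}\to\varphi(t)$ in $Y$ for $\mu$-a.\,e. $t\in\WWW$.
\item On the other hand, for every fixed $t$, boundedness of the single operator $\AAA_t$ gives $\AAA_t x_{n_k}\to\AAA_t x$.
\item Hence $\varphi=\AAA x$ $\mu$-a.\,e., that is, $\varphi=Tx$, so the graph is closed.
\end{enumerate}
No $\sigma$-finiteness or separability is needed here. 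The a.\,e. convergent subsequence argument is purely measure-theoretic, and each $\AAA x_n$ is already strongly measurable. This matters because Example \ref{longline} shows the support need not be $\sigma$-finite.

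The only step needing care is the subsequence extraction in step 1 for a possibly non-$\sigma$-finite $\mu$. I would pick $n_k$ with $\|Tx_{n_{k+1}}-Tx_{n_k}\|_{L^p}<2^{-k}$ and set $g=\sum_k\|\AAA x_{n_{k+1}}-\AAA x_{n_k}\|_Y$. Minkowski's inequality gives $\|g\|_{L^p(\mu)}\leqslant 1$, so $g<+\iii$ a.\,e. Thus $(\AAA_t x_{n_k})_{k=1}^\iii$ is Cauchy for a.\,e. $t$, and its pointwise limit equals $\varphi$ a.\,e. by the standard completeness proof of $L^p(\WWW,\mu,Y)$. All of this holds for arbitrary measure spaces.

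As an alternative I would keep in reserve a Banach–Steinhaus argument. The maps $x\mapsto\left(\int_E\|\AAA_t x\|_Y^p\,d\mu\right)^{1/p}$, for $E$ of finite measure with $\AAA$ truncated, are continuous seminorms, being lower semicontinuous via Fatou's lemma. Their supremum is finite at every point, so a Baire category argument gives a uniform bound. However, the closed graph route is cleaner and I expect it to go through directly.
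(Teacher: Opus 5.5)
Your proposal is correct and is essentially the paper's own proof: the closed graph theorem applied to $T\colon X\to L^p(\WWW,\mu,Y)$, $Tx=\AAA x$, with an a.\,e.\ convergent subsequence of $(\AAA x_n)$ identifying the $L^p$-limit as $\AAA x$. Your justification of the subsequence extraction for a non-$\sigma$-finite $\mu$ is sound, and it makes explicit a step the paper leaves implicit.
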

A sketch of proof of the above theorem, as well as formulation of Proposition \ref{pBnormProp}, can be found in \cite[Page 64]{ABS}. For reader's convenience we present a proof, based on Closed Graph Theorem.
\begin{proof}[Proof of Theorem \ref{CGT}]  A natural (and  obviously linear) map $T : X \to L^p(\Omega, \mu, Y)$ is given by $Tx = \AAA x$. 
Let us assume that $x_n \to x$ in $X$ and $Tx_n \to (\varphi_t)_{t \in \Omega}$ in $L^p(\Omega, \mu, Y)$.
Since convergence in the norm of $L^p(\Omega, \mu, Y)$ implies existence of a subsequence converging a.\,e. pointwise we have $\lim_{k\to \infty} \AAA_t x_{n_k} = \varphi_t$ for $\mu$-almost all $t \in \Omega$. 
However, continuity of each $\AAA_t$ gives $\lim_{k\to \infty} \AAA_t x_{n_k} = \AAA_t x$ and therefore 
$Tx = (\varphi_t)_{t \in \Omega}$. This proves boundedness of $T$ and completes the proof.
\end{proof}
Theorem \ref{CGT} enables us to define a norm on $L^p_s(\WWW,\mu,\mathcalb B(X, Y))$, in the following  $L^p_s(\WWW,\mu,\mathcalb B(X, Y))$ is considered as a normed space with norm defined in \eqref{pBnorm} below.

\begin{proposition}\label{pBnormProp}
     $L^p_s(\WWW,\mu,\mathcalb B(X, Y))$ is a vector space for every $1 \leqslant p < +\iii$ and a norm on $L^p_s(\WWW,\mu,\mathcalb B(X, Y))$ is defined by the following formula:
    \begin{equation}\label{pBnorm}
        \|\AAA\|_p = \sup_{\|x\|_X=1} \sqrt[p]{\int_\WWW\|\AAA_t x\|_Y^p\,d\mu(t)}.
    \end{equation} 
\end{proposition}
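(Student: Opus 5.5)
The plan is to reduce everything to Theorem \ref{CGT} and the triangle inequality in $L^p(\Omega,\mu,Y)$. First we show that $L^p_s(\WWW,\mu,\BXY)$ is a vector space. Take $\AAA,\BBB$ strongly $p$-integrable and scalars $\alpha,\beta$. For each $x\in X$ the function $(\alpha\AAA+\beta\BBB)x=\alpha\AAA x+\beta\BBB x$ is a linear combination of strongly $\mu$-measurable functions, hence strongly $\mu$-measurable; in particular $\alpha\AAA+\beta\BBB$ is pointwise $\mu$-measurable. Since $\AAA x,\BBB x\in L^p(\Omega,\mu,Y)$ and the latter is a vector space, the combination is again $p$-integrable. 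Pointwise equivalence is compatible with these operations: if $\AAA x=\AAA' x$ a.e. and $\BBB x=\BBB' x$ a.e., the exceptional sets for a fixed $x$ unite into a null set. So the operations are well defined on classes.

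Next, finiteness of $\|\AAA\|_p$ in \eqref{pBnorm} is exactly Theorem \ref{CGT}. The quantity is independent of the representative, because for each fixed $x$ the integral $\int_\WWW\|\AAA_tx\|_Y^p\,d\mu(t)$ depends only on the a.e.-class of $\AAA x$. It is also convenient to rewrite it as $\|\AAA\|_p=\sup_{\|x\|_X=1}\|\AAA x\|_{L^p(\Omega,\mu,Y)}$, which is the operator norm of the map $T_\AAA\colon x\mapsto\AAA x$ from the proof of Theorem \ref{CGT}.

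With this rewriting the norm axioms follow from those of $L^p(\Omega,\mu,Y)$:
\begin{itemize}
\item \emph{Homogeneity.} $\|\lambda\AAA\|_p=|\lambda|\,\|\AAA\|_p$, since $\|\lambda\AAA x\|_{L^p}=|\lambda|\,\|\AAA x\|_{L^p}$.
\item \emph{Triangle inequality.} For each unit $x$, Minkowski in $L^p(\Omega,\mu,Y)$ gives $\|(\AAA+\BBB)x\|_{L^p}\leqslant\|\AAA x\|_{L^p}+\|\BBB x\|_{L^p}\leqslant\|\AAA\|_p+\|\BBB\|_p$. Taking the supremum over $x$ gives $\|\AAA+\BBB\|_p\leqslant\|\AAA\|_p+\|\BBB\|_p$.
\item \emph{Definiteness.} If $\|\AAA\|_p=0$, then $\AAA x=0$ a.e. for every unit $x$, and by homogeneity for every $x\in X$. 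Hence $\AAA$ is pointwise equivalent to $0$, which is the zero element of the quotient space.
\end{itemize}

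The only step needing care is definiteness, and it works precisely because the identification is pointwise equivalence rather than a.e. equality of operators. Example \ref{longline}, with $\|\AAA\|_1=0$ but $\|\AAA_t\|=1$ everywhere, shows that a.e. equality of operators would fail here. Everything else is routine.
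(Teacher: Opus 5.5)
Your proof is correct and follows the paper's route: finiteness of $\|\AAA\|_p$ is Theorem~\ref{CGT}, homogeneity and the triangle inequality are inherited from $L^p(\Omega,\mu,Y)$, and definiteness comes from identifying pointwise equivalent families. Your version just writes out the details the paper leaves implicit, namely closure under linear combinations and well-definedness on equivalence classes.
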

\begin{proof}
Homogeneity and triangle inequality are obvious, non-degeneracy follows from pointwise identification of families in $L^p_s(\WWW,\mu,\mathcalb B(X, Y))$. 
\end{proof}

Using \eqref{Ulazakx}, one derives the following standard estimate:
\begin{multline}\label{normEst}
    \left\| \int_E \AAA \, d\mu \right\|  =\sup_{\|x\|_X=1}\left\|\left( \int_E \AAA \, d\mu\right)\!x \right\|_Y =\sup_{\|x\|_X=1} \left\|\prescript{\mathrlap{B}}{}{\int_{E}} \AAA x \,d\mu \right\|_Y \\
    \leqslant \sup_{\|x\|_X=1} \int_E\|\AAA x \|_Y\,d\mu \leqslant \| \AAA \|_1, \qquad E \in \M, \quad \AAA \in L^1_s(\Omega, \mu, \BXY).
\end{multline}

Let, for $\AAA \in L^1_s(\Omega, \M, \mu, \BXY)$, $\AAA_\M = \{ \int_E \AAA \, d\mu : E \in \M \}$ and set 
$$KL^1_s(\Omega, \M, \mu, \BXY) = \{\AAA \in L^1_s(\Omega, \mu, \BXY) : \AAA_\M \; \mbox{\rm is relatively compact}\}. $$
Using characterization of compactness in metric spaces by finite $\varepsilon$-nets and \eqref{normEst} one obtains the following proposition.
\begin{proposition}\label{ClSub}
    $KL^1_s(\Omega, \M, \mu, \BXY)$ is a subspace of $L^1_s(\Omega, \M, \mu, \BXY)$ which contains all simple functions.
\end{proposition}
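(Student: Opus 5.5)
The plan has three parts: show that $KL^1_s$ is closed under scalar multiplication and under addition, and then show that it contains every $\mu$-simple family. Throughout I use that a subset of the complete metric space $\BXY$ is relatively compact if and only if it is totally bounded, that is, for every $\varepsilon>0$ it admits a finite $\varepsilon$-net.

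For the subspace property, linearity of the integral \eqref{Ulazakx} in $\AAA$ gives
\begin{equation*}
(\lambda\AAA+\BBB)_\M\subseteq \lambda\AAA_\M+\BBB_\M=\{\lambda S+T: S\in\AAA_\M,\ T\in\BBB_\M\}.
\end{equation*}
This set is the image of $\AAA_\M\times\BBB_\M$ under the continuous map $(S,T)\mapsto \lambda S+T$. The closure of $\AAA_\M\times\BBB_\M$ is compact, so its image is compact, and hence $(\lambda\AAA+\BBB)_\M$ is relatively compact. Alternatively one can argue with nets directly. Take an $\varepsilon$-net $\{S_i\}$ for $\AAA_\M$ and an $\varepsilon$-net $\{T_j\}$ for $\BBB_\M$. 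Then $\{\lambda S_i+T_j\}$ is a $(|\lambda|+1)\varepsilon$-net for $(\lambda\AAA+\BBB)_\M$. The zero family trivially belongs to $KL^1_s$, so it is a linear subspace of $L^1_s(\Omega,\M,\mu,\BXY)$.

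For simple functions, first check that they lie in $L^1_s$. Take $\AAA=\sum_{k=1}^m A_k\chi_{E_k}$ with $E_k$ disjoint, $\mu(E_k)<+\infty$ and $A_k\in\BXY$. Then $\AAA x$ is a $\mu$-simple $Y$-valued function for each $x$, hence Bochner integrable. For every $E\in\M$,
\begin{equation*}
\int_E\AAA\,d\mu=\sum_{k=1}^m \mu(E\cap E_k)A_k .
\end{equation*}
So $\AAA_\M$ lies in the image of the bounded set $\prod_{k=1}^m[0,\mu(E_k)]$ under the continuous map $(c_k)_{k=1}^m\mapsto\sum_k c_kA_k$ into $\BXY$. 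This image is compact, so $\AAA_\M$ is relatively compact. Alternatively, $\AAA_\M$ is a bounded subset of the finite-dimensional span of $A_1,\dots,A_m$.

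The mention of \eqref{normEst} in the text suggests the authors may also intend the closedness statement: if $\AAA^{(n)}\to\AAA$ in $\|\cdot\|_1$ with each $\AAA^{(n)}\in KL^1_s$, then $\AAA\in KL^1_s$. If that is meant, the argument is as follows. By \eqref{normEst},
\begin{equation*}
\Bigl\|\int_E\AAA\,d\mu-\int_E\AAA^{(n)}\,d\mu\Bigr\|\leqslant\|\AAA-\AAA^{(n)}\|_1
\end{equation*}
uniformly in $E\in\M$. Given $\varepsilon>0$, choose $n$ with $\|\AAA-\AAA^{(n)}\|_1<\varepsilon/2$, and take a finite $\varepsilon/2$-net for $\AAA^{(n)}_\M$. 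That net is then an $\varepsilon$-net for $\AAA_\M$, so $\AAA_\M$ is totally bounded. In particular, the $\|\cdot\|_1$-closure of the simple functions is contained in $KL^1_s$. I expect no real obstacle here. The only points needing care are the uniformity of \eqref{normEst} in $E$ and remembering that total boundedness is the right criterion in a complete space.
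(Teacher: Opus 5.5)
Your proof is correct and follows the paper's one-line argument: finite $\varepsilon$-nets handle the subspace and simple-function parts, and the estimate \eqref{normEst}, uniform in $E\in\M$, handles $\|\cdot\|_1$-limits. You were also right to treat closedness as part of the intended content, since the paper later uses exactly that after Example \ref{Rademacher} to conclude that the Rademacher family is not in the $L^1_s$-closure of the simple functions.
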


In general, the space $L^p_s(\Omega, \mu, \BXY)$ does not need to be complete, 
but in the case of a discrete measure we have the following result.

\begin{proposition}
    \label{discr}
   The space $\ell_s^p(\mathcalb B(X, Y))$ is a Banach space for all $1 \leqslant p < +\infty$. 
\end{proposition}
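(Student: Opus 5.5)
We need to show that every Cauchy sequence $(\AAA^{(k)})_{k=1}^\iii$ in $\ell_s^p(\mathcalb B(X, Y))$ converges in the norm \eqref{pBnorm}. The plan follows the standard completeness proof for $\ell^p$, with the supremum over the unit sphere of $X$ carried along. The key observation is that for $\Omega=\N$ with counting measure,
$$\|\AAA\|_p = \sup_{\|x\|_X=1}\Big(\sum_{n=1}^\iii\|\AAA_n x\|_Y^p\Big)^{1/p} \geqslant \sup_n \|\AAA_n\|,$$
since a single term of the sum is at most the whole sum. On $\N$ pointwise equivalence is just equality, and measurability is automatic, so no measure-theoretic subtleties arise.

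\emph{Step 1 (candidate limit).} By the inequality above, for each fixed $n$ the sequence $(\AAA^{(k)}_n)_{k=1}^\iii$ is Cauchy in the operator norm of $\mathcalb B(X, Y)$. Since $Y$ is complete, $\mathcalb B(X,Y)$ is a Banach space, and we may define $\AAA_n = \lim_{k\to\iii}\AAA^{(k)}_n$ in operator norm. In particular $\AAA^{(k)}_n x\to\AAA_n x$ in $Y$ for every $x\in X$.

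\emph{Step 2 (Fatou-type truncation).} Fix $\varepsilon>0$ and choose $K$ such that $\|\AAA^{(k)}-\AAA^{(l)}\|_p<\varepsilon$ for all $k,l\geqslant K$. Then for each $x$ with $\|x\|_X=1$ and each $N\in\N$,
$$\sum_{n=1}^N\|\AAA^{(k)}_n x-\AAA^{(l)}_n x\|_Y^p<\varepsilon^p, \qquad k,l\geqslant K.$$
Letting $l\to\iii$ in this finite sum gives $\sum_{n=1}^N\|\AAA^{(k)}_n x-\AAA_n x\|_Y^p\leqslant\varepsilon^p$. Letting $N\to\iii$ and then taking the supremum over $x$ yields
$$\sup_{\|x\|_X=1}\Big(\sum_{n=1}^\iii \|(\AAA^{(k)}_n-\AAA_n)x\|_Y^p\Big)^{1/p}\leqslant\varepsilon, \qquad k\geqslant K.$$

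\emph{Step 3 (membership and conclusion).} By Step 2, $\AAA^{(K)}-\AAA$ is strongly $p$-integrable. Since $\AAA^{(K)}\in\ell^p_s$ and $\ell^p_s$ is a vector space by Proposition \ref{pBnormProp}, it follows that $\AAA=\AAA^{(K)}-(\AAA^{(K)}-\AAA)\in\ell^p_s(\mathcalb B(X,Y))$. The estimate in Step 2 then reads $\|\AAA^{(k)}-\AAA\|_p\leqslant\varepsilon$ for all $k\geqslant K$, which is the required convergence.

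There is no real obstacle. The only point needing care is that all limit interchanges are done on finite partial sums for fixed $x$, before the supremum over $x$ is taken, so that the final estimate is uniform in $x$.
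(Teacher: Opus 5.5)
Your proof is correct, but it takes a different route from the paper's.

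The paper uses the criterion that a normed space is complete if every absolutely summable series converges. Starting from $\sum_n\|\AAA^{(n)}\|_p=M<+\infty$, it applies Minkowski's inequality to get $\bigl(\sum_m(\sum_n\|\AAA^{(n)}_m x\|_Y)^p\bigr)^{1/p}\leqslant M$ for unit $x$. It then defines $\AAA_m x=\sum_n\AAA^{(n)}_m x$ as an absolutely convergent series in $Y$. A second application of Minkowski gives the tail estimate $\|\AAA-\sum_{n\leqslant N}\AAA^{(n)}\|_p\leqslant\sum_{n>N}\|\AAA^{(n)}\|_p$.

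You instead work directly with a Cauchy sequence. You get the candidate limit from the bound $\sup_n\|\AAA_n\|\leqslant\|\AAA\|_p$, so each coordinate even converges in operator norm. Where the paper uses Minkowski, you pass to the limit on finite partial sums for fixed $x$ and only then take the supremum.

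What each buys: the paper's version produces the limit as an explicit strong sum and gets the convergence estimate in one line. The cost is invoking the series criterion and Minkowski's inequality for infinitely many summands. Yours is the more elementary textbook $\ell^p$ argument, and it makes transparent why the uniformity in $x$ survives the limit.

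Both proofs rely on the discreteness of $\N$ in the same way. Every point is an atom of measure $1$, so there are no $x$-dependent exceptional null sets. Also, the operator norm of a single coordinate is dominated by the $\|\cdot\|_p$-norm (in the paper this appears implicitly as $\|\AAA_m\|\leqslant M$). This is precisely the feature unavailable for a general measure $\mu$.
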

\begin{proof}
Let $\left(\AAA^{(n)}\right)_{n=1}^\infty$ be sequence in $\ell_s^p(\mathcalb B(X, Y))$ such that
$$\sum_{n=1}^{\infty}\sup_{\|x\|_X=1}\sqrt[p]{\sum_{m=1}^{\infty} \left\|\AAA_m^{(n)}x\right\|_Y^p}=\sum_{n=1}^{\infty}\left\|\AAA^{(n)}\right\|_{p}=M<+\infty.$$
Thus, for every unit vector $x$ in $X$ we get, using Minkowski's inequality
$$\sqrt[p]{\sum_{m=1}^{\infty}\left(\sum_{n=1}^{\infty}\left\|\AAA_m^{(n)}x\right\|_Y\right)^p}\leqslant\sum_{n=1}^{\infty}\sqrt[p]{\sum_{m=1}^{\infty}\left\|\AAA_m^{(n)}x\right\|_Y^p}\leqslant M.$$
Thus, $\sum_{n=1}^\infty \AAA_m^{(n)}x$ converges in $X$ for every $m \in \N$. Hence, for all $x \in X$ and $m \in \N$ we can define $\AAA_m x:=\sum_{n=1}^{\infty}\AAA_m^{(n)}x$. 
We defined $\AAA_m \in \mathcalb B(X, Y)$ with $\|\AAA_m\| \leqslant M$.
Set $\AAA=(\AAA_m)_{m\in\N}$. Then, using Minkowski's inequality, we have 
\begin{equation*}
    \begin{split}
   &\left\| \AAA - \sum_{n=1}^N \AAA^{(n)} \right\|_p^p = \sup_{\|x\|_X=1} \sum_{m=1}^{\infty} \left\| \AAA_mx -\sum_{n=1}^N \AAA_m^{(n)}x \right\|_Y^p \\
   & = \sup_{\|x\|_X=1} \sum_{m=1}^{\infty} \left\| \sum_{n=N+1}^{\infty} \AAA_m^{(n)}x \right\|_Y^p \leqslant \sup_{\|x\|_X=1} \sum_{m=1}^{\infty} \left(\sum_{n=N+1}^{\infty} \left\| \AAA_m^{(n)}x \right\|_Y \right)^p \\
   & \leqslant \left( \sum_{n=N+1}^{\infty}\sup_{\|x\|_X = 1} \sqrt[p]{\sum_{m=1}^{\infty} \left\| \AAA_m^{(n)}x\right\|_Y^p} \right)^p = \left( \sum_{n=N+1}^{\infty} \left\| \AAA^{(n)}\right\|_p \right)^p, \qquad N \in \N.
    \end{split}
\end{equation*}
Taking $N\to\infty$ we get desired conclusion.
\end{proof}
Let us point out that this completeness does not follow from coincidence of $\ell^p_s(\BXY)$ with the corresponding Bochner space $l^p(\BXY)$, which in general does not hold, see Example \ref{NeZatvorenost}. Proposition \ref{BochnerFinite} below gives another sufficient condition for completeness of $L^p_s(\Omega, \M, \mu, \BXY)$.
\begin{lemma}\label{merljivostTenzora}
    Let $f:\Omega\to Y$ be strongly $\mu$-measurable and $x^*\in X^*$. Then the mapping $\Omega\ni t\mapsto f(t) \otimes x^* \in \BXY$ is strongly $\mu$-measurable 
    function. 
\end{lemma}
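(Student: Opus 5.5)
The plan is to approximate $f$ by simple functions and observe that the map $y\mapsto y\otimes x^*$ is a bounded linear operator from $Y$ into $\BXY$. This map turns a pointwise limit of $\mu$-simple $Y$-valued functions into a pointwise limit of $\mu$-simple $\BXY$-valued functions.

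First, by definition of strong $\mu$-measurability there is a sequence of $\mu$-simple functions $s_n=\sum_{k=1}^{m_n} y_k^{(n)}\chi_{A_k^{(n)}}$, where the sets $A_k^{(n)}\in\M$ are disjoint and of finite measure, such that $s_n(t)\to f(t)$ in $Y$ for all $t$ outside a $\mu$-null set $N$. Define
$$\sigma_n(t)=s_n(t)\otimes x^*=\sum_{k=1}^{m_n}\bigl(y_k^{(n)}\otimes x^*\bigr)\chi_{A_k^{(n)}}(t).$$
This holds pointwise: for $t\notin\bigcup_k A_k^{(n)}$ both sides vanish, since $0\otimes x^*=0$. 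So each $\sigma_n$ is a $\mu$-simple $\BXY$-valued function with the same sets $A_k^{(n)}$ and coefficients $y_k^{(n)}\otimes x^*\in\BXY$.

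Second, $(a,b^*)\mapsto a\otimes b^*$ is bilinear and satisfies the identity $\|a\otimes b^*\|=\|a\|_Y\|b^*\|_{X^*}$ recorded in the introduction. Hence for $t\notin N$,
$$\|\sigma_n(t)-f(t)\otimes x^*\|=\|(s_n(t)-f(t))\otimes x^*\|=\|s_n(t)-f(t)\|_Y\,\|x^*\|_{X^*}\to 0.$$
Thus $\sigma_n\to f\otimes x^*$ $\mu$-a.e. in operator norm, which is exactly strong $\mu$-measurability of $t\mapsto f(t)\otimes x^*$.

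There is no real obstacle here. The only points to check are that the image of a $\mu$-simple function is again $\mu$-simple with finite-measure supports, which holds because the sets are unchanged, and that the tensor map is linear and norm-preserving in its first variable. As an alternative, one could invoke Theorem \ref{PMT}, using that the image of a separable set under the bounded map $y\mapsto y\otimes x^*$ is separable. The direct approximation argument avoids having to verify weak measurability in $\BXY$, whose dual is hard to describe.
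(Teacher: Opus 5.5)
Your proof is correct and follows essentially the paper's route. The paper also writes the map as $T_{x^*}\circ f$ with the bounded linear operator $T_{x^*}y=y\otimes x^*$, $\|T_{x^*}\|=\|x^*\|_{X^*}$, but then cites a general result on composing strongly measurable functions with continuous maps vanishing at $0$; you instead prove that step directly by pushing the approximating $\mu$-simple functions through $T_{x^*}$, which is self-contained.
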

\begin{proof}
    Let $T_{x^*}\colon Y\rightarrow\BXY$ be defined by $T_{x^*}y = y \otimes x^*$, for all $y\in Y$. Then, $T_{x^*}$ is linear and, since $\|T_{x^*}y\| = \| y \otimes x^* \| = \|x^*\|_{X^*}\cdot\|y\|_Y$, continuous with $\|T_{x^*}\| = \|x^*\|_{X^*}$. 
    Let $\AAA\colon\WWW\rightarrow\BXY$ be defined by $\AAA_t = f(t) \otimes x^* = (T_{x^*}\circ f)(t)$, for $t\in\WWW$. Finally, as $T_{x^*}$ is measurable (being continuous) with $T_{x^*}(0)=0$ and $f$ is strongly $\mu$-measurable, it follows that $\AAA=T_{x^*}\circ f$ is strongly $\mu$-measurable due to \cite[Corrolary~1.1.24]{ABS}.
\end{proof}

\begin{proposition}\label{BochnerFinite}
    Let $\dim X=n$. Then every $\AAA \in \mathcal M_s(\Omega, \M, \mu, \BXY)$ is strongly $\mu$-measurable. Further,  let $1\leqslant p<+\iii$. Then $ L^p_s(\WWW,\M,\mu,\mathcalb{B}(X,Y))=L^p(\WWW,\M,\mu,\mathcalb{B}(X,Y))$ as vector spaces. Moreover, 
    $$\|\AAA\|_p\leqslant\|\AAA\|_{L^p}\leqslant n\cdot\|\AAA\|_p, \qquad \AAA\in L^p_s(\WWW,\M,\mu,\mathcalb{B}(X,Y)),$$
 implying completeness of $L^p_s(\WWW,\M,\mu,\mathcalb{B}(X,Y))$.
  
\end{proposition}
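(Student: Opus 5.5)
Fix a basis $e_1,\dots,e_n$ of $X$ with $\|e_k\|_X=1$ and let $e_1^*,\dots,e_n^*\in X^*$ be the coordinate functionals, so $x=\sum_{k=1}^n\scal{x}{e_k^*}e_k$. Then every $A\in\BXY$ has the rank-one expansion
\begin{equation*}
A=\sum_{k=1}^n (Ae_k)\otimes e_k^*,
\end{equation*}
since $\bigl(\sum_k (Ae_k)\otimes e_k^*\bigr)x=\sum_k\scal{x}{e_k^*}Ae_k=Ax$. For $\AAA\in\mathcal M_s(\Omega,\M,\mu,\BXY)$ each $\AAA e_k$ is strongly $\mu$-measurable. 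Lemma \ref{merljivostTenzora} then shows that $t\mapsto \AAA_te_k\otimes e_k^*$ is strongly $\mu$-measurable, and a finite sum of strongly measurable functions is strongly measurable. This settles the first claim.

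For the norm comparison I use $\|\AAA\|_{L^p}$ to mean $\bigl(\int_\Omega\|\AAA_t\|^p\,d\mu\bigr)^{1/p}$. The left inequality is immediate: $\|\AAA_tx\|_Y\leqslant\|\AAA_t\|$ for $\|x\|_X=1$, so integrating and taking the supremum gives $\|\AAA\|_p\leqslant\|\AAA\|_{L^p}$. In particular $L^p\subseteq L^p_s$.

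For the right inequality I apply the triangle inequality to the expansion and use $\|a\otimes b^*\|=\|a\|_Y\|b^*\|_{X^*}$. This gives
\begin{equation*}
\|\AAA_t\|\leqslant\sum_{k=1}^n\|e_k^*\|_{X^*}\,\|\AAA_te_k\|_Y .
\end{equation*}
Minkowski's inequality in $L^p(\mu)$ then yields $\|\AAA\|_{L^p}\leqslant\sum_k\|e_k^*\|\,\|\AAA e_k\|_{L^p(\mu,Y)}\leqslant \bigl(\sum_k\|e_k^*\|\bigr)\|\AAA\|_p$. This already proves that $L^p_s\subseteq L^p$ and that the two norms are equivalent.

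The main obstacle is getting the sharp constant $n$, which requires $\|e_k^*\|_{X^*}\leqslant 1$. I would obtain this from an Auerbach basis: choose $e_1,\dots,e_n$ maximizing $|\det|$ of the coordinate matrix over $B_X^n$ with respect to some fixed basis. Maximality gives $\|e_k\|=1$ and $\|e_k^*\|=1$, since by Cramer's rule $e_k^*(x)$ is a ratio of determinants with $x$ replacing $e_k$. With such a basis the bound becomes $n\|\AAA\|_p$.

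Finally, completeness follows because the two norms are equivalent and $L^p(\Omega,\mu,\BXY)$ is a Banach space, as recalled in the preliminaries.
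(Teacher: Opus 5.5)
Your proof is correct and follows the same route as the paper: an Auerbach basis, the rank-one expansion $\AAA_t=\sum_{k=1}^n(\AAA_te_k)\otimes e_k^*$ combined with Lemma \ref{merljivostTenzora} for strong measurability, and the triangle inequality with $\|a\otimes b^*\|=\|a\|_Y\|b^*\|_{X^*}$ for the norm bound. The differences are cosmetic: you apply Minkowski's inequality in $L^p(\mu)$ where the paper integrates the pointwise H\"older bound $\|\AAA_t\|^p\leqslant n^{p-1}\sum_{k}\|\AAA_te_k\|_Y^p$, and you sketch the determinant proof of Auerbach's lemma where the paper cites it.
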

\begin{proof}
As $X$ is $n$-dimensional, due to Auerbach's Lemma (for details see \cite[Proposition~6.26]{DJT}) there exist basis of unit vectors $e_1,e_2,\ldots,e_n\in X$ and basis of unit functionals $e_1^*,e_2^*,\ldots,e_n^*\in X^*$ such that $\scal{e_i}{e_j^*}=\delta_{ij}$, for all $i,j$ in $\{1,2,\ldots,n\}$. Let $\AAA\in \mathcal M_s(\WWW,\M,\mu,\BXY)$ be arbitrary. Since for every $x\in X$ holds $x=\sum_{k=1}^n\scal{x}{e_k^*}e_k$, it follows that 
    \begin{equation*}
        \AAA_t=\sum_{k=1}^n (\AAA_t e_k) \otimes  e_k^*, \qquad t \in \Omega.
    \end{equation*}
    As all of the mappings $\AAA e_k$ are strongly $\mu$-measurable $Y$-valued functions, by Lemma \ref{merljivostTenzora} the mappings $(\AAA e_k) \otimes e_k^*$ are strongly $\mu$-measurable $\BXY$-valued functions implying that $\AAA$ is also such. 
    
    Now let us assume $\AAA\in L^p_s(\WWW,\M,\mu,\BXY)$. Since for all $t\in\Omega$ we have \begin{equation*}
        \|\AAA_t\| \leqslant \sum_{k=1}^n \| (\AAA_t e_k) \otimes  e_k^* \| = \sum_{k=1}^n \|\AAA_te_k\|_Y \cdot \|e_k^*\|_{X^*} = \sum_{k=1}^n \|\AAA_t e_k\|_Y\end{equation*}
due to Holder inequality we get that for all $t\in\WWW$ we have
\begin{equation}\label{helder}
    \|\AAA_t\|^p \leqslant \left( \left( \sum_{k=1}^n \|\AAA_te_k\|_Y^p \right)^\frac{1}{p} \left( \sum_{k=1}^n 1^\frac{p}{p-1} \right)^\frac{p-1}{p} \right)^p = n^{p-1} \cdot \sum_{k=1}^n\|\AAA_te_k\|_Y^p.
\end{equation}
As $X$ is separable (being finite-dimensional), $\Omega\ni t\mapsto\|\AAA_t\|\in\R$ is a $\mu$-measurable function. Moreover, the function $\AAA e_k$ belongs to the Banach space $ L^p(\Omega, \M, \mu,Y)$ for all $k\in\{1,\ldots,n\}$. Then, using \eqref{helder}, we conclude that $\AAA\in L^p(\WWW,\mu,\BXY)$. By integrating \eqref{helder}, we get
\begin{equation*}
\begin{split}
\|\AAA\|_{L^p}^p & \leqslant n^{p-1} \sum_{k=1}^n \int_\WWW \|\AAA_te_k\|_Y^p\,d\mu(t)\\& \leqslant n^{p-1}\sum_{k=1}^n\sup_{\|x\|_X\leqslant1}\int_\WWW\|\AAA_tx\|_Y^p\,d\mu(t)  = n^p\cdot\|\AAA\|_p^p,
\end{split}
\end{equation*}
proving the proclaimed equivalence of norms. Since $L^p(\WWW,\M,\mu,\BXY)$ is a Banach space, $L^p_s(\WWW,\M,\mu,\BXY)$ is also a Banach space.
\end{proof}

\begin{remark}
We present an example where the constant $n$ is attained. We take $X=Y=\C^n$ with $\ell^1$-norm, and define $\AAA\in\ell^1(\BXY)$ by $\AAA_k=e_k\otimes e_k^*$ for $1 \leqslant k \leqslant n$, where $\{e_1,e_2,\ldots,e_n\}$ is canonical basis of $\C^n$. Direct calculation gives us (for more details see Example \ref{BandpB}) where $\|\AAA\|_1=1$ and $\|\AAA\|_{L^1}=n=n\cdot\|\AAA\|_1$.
\end{remark}

\begin{lemma}\label{Complemented}
Let $M$ be a complemented subspace of $X$, with complement $N$, and let $\pi\colon X \to M$ be the corresponding continuous projection along $N$. Assume $\AAA = (\AAA_t)_{t \in \Omega}$ is a strongly $\mu$-measurable family in $\mathcalb B(M, Y)$. Then $\AAA' = (\AAA_t \pi)_{t \in \Omega}$ is a strongly $\mu$-measurable family in $\BXY$.
    
If, moreover, $\AAA \in L^p(\Omega, \M, \mu, \mathcalb B(M, Y))$, then $\AAA' \in L^p(\Omega, \M, \mu, \BXY)$. 
\end{lemma}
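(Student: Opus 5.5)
The plan is to write the map $t\mapsto\AAA_t\pi$ as the composition of $\AAA$ with a fixed bounded linear operator between operator spaces, and then to invoke the same composition principle used in Lemma \ref{merljivostTenzora}. Concretely, define the right-composition map $R_\pi\colon\mathcalb B(M,Y)\to\BXY$ by $R_\pi(T)=T\pi$. It is clearly linear, and it is bounded because $\|T\pi\|\leqslant\|T\|\cdot\|\pi\|$, so $\|R_\pi\|\leqslant\|\pi\|$. Moreover, since $\pi$ restricted to $M$ is the identity, we have $\|T\pi\|\geqslant\sup_{m\in B_M}\|Tm\|_Y=\|T\|$. Hence
$$\|T\|\leqslant\|R_\pi T\|\leqslant\|\pi\|\,\|T\|,$$
so $R_\pi$ is an isomorphic embedding. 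The family in question is exactly $\AAA'=R_\pi\circ\AAA$.

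For strong $\mu$-measurability, I would argue directly with simple functions rather than quoting \cite[Corollary~1.1.24]{ABS}, although that corollary would do the job equally well. Since $\AAA$ is strongly $\mu$-measurable, there are $\mu$-simple functions $s_n=\sum_k T_k^{(n)}\chi_{A_k^{(n)}}$ with $s_n(t)\to\AAA_t$ in operator norm for $\mu$-a.e. $t$. Then $R_\pi\circ s_n=\sum_k (T_k^{(n)}\pi)\chi_{A_k^{(n)}}$ is again $\mu$-simple, and it uses the same sets of finite measure. By continuity of $R_\pi$, $R_\pi s_n(t)\to\AAA_t\pi$ for $\mu$-a.e. $t$. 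This proves that $\AAA'$ is strongly $\mu$-measurable.

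For the $L^p$ statement, suppose $\AAA\in L^p(\Omega,\M,\mu,\mathcalb B(M,Y))$. By the first part, $\AAA'$ is strongly $\mu$-measurable, so $t\mapsto\|\AAA'_t\|$ is $\mu$-measurable. The pointwise bound $\|\AAA_t\pi\|\leqslant\|\pi\|\,\|\AAA_t\|$ then gives
$$\int_\Omega\|\AAA'_t\|^p\,d\mu(t)\leqslant\|\pi\|^p\int_\Omega\|\AAA_t\|^p\,d\mu(t)<+\infty.$$
Thus $\AAA'\in L^p(\Omega,\M,\mu,\BXY)$, with $\|\AAA\|_{L^p}\leqslant\|\AAA'\|_{L^p}\leqslant\|\pi\|\,\|\AAA\|_{L^p}$.

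I do not expect any real obstacle here. The only point requiring care is that measurability must be transferred in the operator-norm sense, and not merely pointwise on vectors. That is why I insist on viewing $R_\pi$ as a bounded operator between the operator spaces: it carries simple functions to simple functions and preserves a.e. norm convergence.
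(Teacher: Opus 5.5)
Your proof is correct and follows the paper's argument: compose the approximating $\mu$-simple functions with $\pi$ on the right to obtain $\mu$-simple functions converging a.e. to $\AAA_t\pi$, then use $\|\AAA_t\pi\|\leqslant\|\pi\|\,\|\AAA_t\|$ for the $L^p$ part. Your extra lower bound $\|T\pi\|\geqslant\|T\|$, which makes $R_\pi$ an isomorphic embedding, is also correct but is not needed for the lemma.
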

\begin{proof}
Let $s_n : \Omega \to \mathcalb B(M, Y)$ be a sequence of $\mu$-simple functions such that $\lim_{n\to\infty} \| \AAA_t - s_n(t) \| = 0$ $\mu$-almost everywhere. Then the sequence $s_n'(t) = s_n(t)\pi$ is a sequence of $\mu$-simple functions in $\BXY$ such that $\lim_{n\to\infty} \| \AAA'_t - s_n'(t) \| = 0$ $\mu$-almost everywhere, which proves the first statement. The second one follows from $\| \AAA_t' \| \leqslant \| \pi \| \cdot \| \AAA_t \|$. 
\end{proof}

\subsection{Examples and counterexamples}

Clearly $L^p(\WWW,\mu,\mathcalb B(X, Y))$ is contained in $L^p_s(\WWW,\mu,\mathcalb B(X, Y))$ and 
$$\| \AAA \|_p \leqslant \| \AAA \|_{L^p(\Omega, \mu, \mathcalb B(X, Y))}, \qquad 
\AAA \in L^p(\WWW,\mu,\mathcalb B(X, Y)).$$

Our first task is to compare the space $L^p_s(\WWW,\mu,\mathcalb B(X, Y))$ to the narrower Bochner space $L^p(\WWW,\mu,\mathcalb B(X, Y))$, this is done by the following examples.
The first one appeared in \cite{MAMK} (in a special case $p=2$) as an Example 2.12, it was noted in \cite{MAMK} that $\AAA$ is pointwise Dunford integrable, but not strongly $\mu$-measurable. 
However, more is true, it is in fact strongly $p$-integrable for every $1 \leqslant p < +\infty$.

\begin{example}\label{pBnijeB}
    Let $\Omega = [0, 1]$, $X = L^p((0,1),m)$ where $1 \leqslant p < +\infty$ and let $\AAA_t$ be the multiplication operator by $\chi_{(0, t)}$ for all $0 \leqslant t \leqslant 1$. 
    It is easily seen that the range of $\AAA$ is a non-separable subset of $\mathcalb B(X)$ and therefore $\AAA$ is not strongly $\mu$-measurable and thus does not belong to $L^p(\Omega, m, \mathcalb B(X))$. 
    However, for every $f \in X$ the mapping $[0,1]\ni t \mapsto \chi_{(0,t)} f \in X$ is a continuous  $X$-valued function on $[0, 1]$ and thus Bochner $p$-integrable.\hfill$\triangle$
\end{example}

\begin{example}\label{NeZatvorenost}
    Let $1 \leqslant p, q < +\infty$, set $X = \ell^q$ and define $\AAA : \N \to \mathcalb B(X)$ by $\AAA_n x = \frac{1}{\sqrt[p]{n}} x_n \cdot e_1$ for $x = (x_n)_{n=1}^\infty \in \ell^q$. 
    Then for every $x = (x_n)_{n=1}^\infty \in \ell^q$ we have
    $$\sum_{n=1}^{\infty} \| \AAA_n x \|^p = \sum_{n=1}^{\infty} \frac{1}{n} \cdot | x_n |^p \leqslant \left( \sum_{n=1}^{\infty} \frac{1}{n^{\frac{r}{r-1}}} \right)^{\frac{r-1}{r}}\sqrt[r]{\sum_{n=1}^{+\iii} |x_n|^{pr}} < +\iii,$$
    where $r > 1$ is selected so that $pr \geqslant q$. Hence $\AAA \in \ell^p_s(\mathcalb B(X))$. 
    Since $\| \AAA_n \| = \frac{1}{\sqrt[p]{n}}$ we see that $\AAA$ is not in $\ell^p(\mathcalb B(X))$. 
    Let $q'$ be the exponent conjugate to $q$. Then $\AAA_n^* \in \mathcalb B(\ell^{q'})$ is given by $\AAA_n^* x = \frac{1}{\sqrt[p]{n}} x_1 e_n$, $x = (x_n)_{n=1}^\infty \in \ell^{q'}$. However, for $x_1 \not= 0$, the series $\sum_{n=1}^\infty \| \AAA_n x \|^p$ is divergent and thus $\AAA^*$ is not in $\ell^p_s(\mathcalb B(\ell^{q'}))$. \hfill$\triangle$
\end{example}

In Example \ref{pBnijeB} non-separability of the range excluded $\AAA$ from the space $L^p(\Omega, m, \mathcalb B(X))$, although $\int_{(0,1)} \|\AAA_t\|^p\,dm(t) = 1 < +\iii$.
In Example \ref{NeZatvorenost} it was the size of norm of $\AAA$ that did the trick. 
The same Example \ref{NeZatvorenost} shows that $\AAA \in L^p_s (\Omega, \mu, \mathcalb B(X,Y))$ does not imply $\AAA^\ast \in L^p_s (\Omega, \mu, \mathcalb B(Y^*, X^*))$, even in the case when $X$ is a Hilbert space (take $q=2$ in Example \ref{NeZatvorenost}). 

Hence we conclude: spaces $L^1(\Omega, \mu, \BH)$ of Bochner integrable functions and $\GG(\Omega, \mu, \BH)$ of Gelfand integrable functions are invariant under conjugation, but the intermediate space $L^1_s(\Omega, \mu, \BH)$ in general is not invariant.
However, certain type of integrability of $\AAA^*$ is present for $\AAA \in L^1_s(\WWW,\M,\mu,\BXY)$. Namely, due to duality $\mathcalb{B}(Y^*,X^*)\cong(Y^*\widehat{\otimes}_\pi X)^*$ we can state the following result.

\begin{lemma}\label{adjungovani}
    If $\AAA\in L^1_s(\WWW,\M,\mu,\BXY)$, then $\AAA^*\in\GG(\WWW,\mu,\mathcalb{B}(Y^*,X^*))$ and $$\left(\int_E\AAA \,d\mu\right)^*=\int_E\AAA^*\,d\mu, \qquad E \in \M.$$ 
\end{lemma}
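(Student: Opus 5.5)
We need to show that $\AAA^*$ is Gelfand integrable as a $\mathcalb B(Y^*,X^*)$-valued function. Here $\mathcalb B(Y^*,X^*)$ is viewed as the dual of $Y^*\widehat{\otimes}_\pi X$, so the relevant scalarizations are $t\mapsto\scal{u}{\AAA_t^*}$ for tensors $u$ in this predual. We must also identify the Gelfand integral over $E$ with $\left(\int_E\AAA\,d\mu\right)^*$. The pairing on elementary tensors is
$$\scal{y^*\otimes x}{B}=\scal{x}{By^*}, \qquad B\in\mathcalb B(Y^*,X^*).$$
Hence for $B=\AAA_t^*$ we get $\scal{x}{\AAA_t^*y^*}=\scal{\AAA_tx}{y^*}$.

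\textbf{Measurability.} For elementary tensors, $t\mapsto\scal{\AAA_tx}{y^*}$ is $\mu$-measurable, since $\AAA x$ is strongly $\mu$-measurable. By linearity the same holds for finite sums of elementary tensors.

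A general $u\in Y^*\widehat{\otimes}_\pi X$ has a representation $u=\sum_n y_n^*\otimes x_n$ with $\sum_n\|y_n^*\|\,\|x_n\|<+\infty$. Then
$$\scal{u}{\AAA_t^*}=\sum_n\scal{\AAA_tx_n}{y_n^*},$$
and the series converges absolutely for each $t$ because $\|\AAA_t\|<\infty$. A pointwise limit of measurable functions is measurable, which settles measurability.

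\textbf{Integrability.} This is the main point, and Theorem \ref{CGT} handles it. We estimate
$$\int_\WWW|\scal{u}{\AAA_t^*}|\,d\mu\le\sum_n\|y_n^*\|\int_\WWW\|\AAA_tx_n\|_Y\,d\mu\le\|\AAA\|_1\sum_n\|y_n^*\|\,\|x_n\|_X<+\infty.$$
The second inequality uses the finiteness of $\|\AAA\|_1$ from Theorem \ref{CGT}. The monotone convergence theorem justifies exchanging the sum and the integral, so $\AAA^*\in\GG(\WWW,\mu,\mathcalb B(Y^*,X^*))$.

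Note that uniform boundedness is genuinely needed here. Each individual $\AAA x_n$ is integrable, but without a uniform bound the series of integrals could diverge.

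\textbf{Identification of the integral.} Fix $E\in\M$ and let $G=\prescript{\mathrlap{G}\,\,}{}{\int_E}\AAA^*\,d\mu$. Testing on elementary tensors, the formula after \eqref{Ulazakx} (applied with the duality $Y\times Y^*$, i.e.\ Bochner integrals commute with functionals) gives
$$\scal{x}{Gy^*}=\int_E\scal{\AAA_tx}{y^*}\,d\mu=\scal{\left(\int_E\AAA\,d\mu\right)x}{y^*}=\scal{x}{\left(\int_E\AAA\,d\mu\right)^*y^*}.$$
This holds for all $x\in X$ and $y^*\in Y^*$. Since elements of $X$ separate points of $X^*$, it follows that $Gy^*=\left(\int_E\AAA\,d\mu\right)^*y^*$ for every $y^*$, which proves the equality of operators.
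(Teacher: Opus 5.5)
Your proof is correct and follows the paper's argument. Like the paper, you establish measurability and the bound $\int_\WWW|\scal{\AAA_tx}{y^*}|\,d\mu\leqslant\|x\|_X\|y^*\|_{Y^*}\|\AAA\|_1$ on elementary tensors $y^*\otimes x$, and then identify the Gelfand integral with $\left(\int_E\AAA\,d\mu\right)^*$ by pairing with $x$ and $y^*$. The only difference is that you prove the passage from elementary tensors to all of $Y^*\widehat{\otimes}_\pi X$ directly, using the absolutely convergent representation $u=\sum_n y_n^*\otimes x_n$, whereas the paper delegates that step to an external lemma (Lemma 1.8 of the reference MS).
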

\begin{proof}
    The mapping $\Omega\ni t\mapsto\scal{x}{\AAA_t^*y^*}=\scal{\AAA_tx}{y^*}\in\C$ is $\mu$-measurable for all $x\in X$ and $y^*\in Y^*$ and
\begin{equation*}
     \int_\WWW|\scal{x}{\AAA_t^*y^*}|\,d\mu(t)=\int_\WWW|\scal{\AAA_tx}{y^*}|\,d\mu(t)\leqslant\|x\|_X\|y^*\|_{Y^*}\|\AAA\|_1<+\iii.
   \end{equation*}
   Hence, by \cite[Lemma~1.8]{MS}, $\AAA^*\in\GG(\WWW,\M,\mu,\mathcalb{B}(Y^*,X^*))$. Moreover, for $E \in \M$:
  \begin{equation*}
  \begin{split}
      \scal{x}{\left(\int_E\AAA^*d\mu\right)y^*}&=\int_E\scal{x}{\AAA_t^*y^*}d\mu(t)\\&=\int_E\scal{\AAA_tx}{y^*}d\mu(t)=\scal{\left(\int_E\AAA d\mu\right)x}{y^*}. \qedhere
  \end{split}
  \end{equation*}
\end{proof}

In the next example we show that the norms $\|\AAA\|_{L^p} = (\int_\Omega\|\AAA_t\|^p\,d\mu(t))^{1/p}$ and 
$\|\AAA\|_p$ in general are not equivalent on $L^p(\Omega,\mu,\mathcalb B(X))$.

\begin{example}\label{BandpB}
    Let $1 \leqslant p, q < \infty$, set $X = \ell^q$, and let $(e_n)_{n=1}^\infty$ be the standard Schauder basis in $\ell^q$. For every $n\in\N$ define an o.v. function $\AAA^{(n)} \colon \N \rightarrow \mathcalb{B}(X)$ by $\AAA^{(n)}_m = \chi_{\{1,\ldots,n\}}(m)e_m\otimes e_m^*$. 
    We have $\| \AAA^{(n)}_m \| = 1$ for $1 \leqslant m \leqslant n$ and therefore $\|\AAA^{(n)}\|_{L^p} = n^{1/p}$. Let us fix $x = (\xi_n)_{n=1}^\infty$ in $\ell^q$. Then $\AAA^{(n)}x : \N \to \ell^q$ is given by
    $\AAA^{(n)}x (k) = \xi_k e_k$ for $1 \leqslant k \leqslant n$ and $\AAA^{(n)}x (k) = 0$ for $k > n$. 
    If $q \leqslant p$, then 
    $$\sum_{k=1}^\infty\| \AAA^{(n)}x(k) \|^p_{\ell^q} = \sum_{k=1}^n |\xi_k |^p \leqslant \| x \|^p_{\ell^p} \leqslant \| x \|^p_{\ell^q}.$$ 
    Thus $\| \AAA^{(n)} \|_p \leqslant 1$, in fact taking $x = e_1$ it is easily seen that $\| \AAA^{(n)} \|_p = 1$. Therefore, in the case $q \leqslant p$, the two norms are not equivalent.
    
    In the remaining case $p < q$ we have, for $x = (\xi_n)_{n=1}^\infty$ in $\ell^q$:
    $$\sum_{k=1}^\infty\| \AAA^{(n)}x (k) \|^p_{\ell^q} = \sum_{k=1}^n | \xi_k |^p \leqslant n^{1-p/q}
    \left(\sum_{k=1}^n | \xi_k |^q \right)^{p/q} \leqslant n^{1-p/q} \| x \|_{\ell^q}^p.$$
    Hence $\| \AAA^{(n)} \|_p \leqslant n^{1/p-1/q}$ and again the two norms are not equivalent.
    $\hfill\triangle$
\end{example}

The next example shows that the norms $\|\cdot\|_{\mathbb{G}}$ and $\|\cdot\|_{1}$ are not equivalent on $L^1_{s}(\Omega,\mu,\BH)$, in the general case.

\begin{example}
Let $\Omega=\N$ and let $(e_n)_{n\in\N}$ be an o.\,n.\,b. for $\HH$. We set
$$\AAA_n^{(m)}=\frac{1}{n}\chi_{\{1,\ldots,m\}}(n)\cdot e_n\otimes e_1^*, \qquad m, n \in \N.$$
Then, for the family $\AAA^{(m)}=\left(\AAA_n^{(m)}\right)_{n\in\N}$ we have:
$$\left\|\AAA^{(m)}\right\|_{1}=\sup_{\|f\|_\HH=1}\int_\N\left\|\AAA^{(m)}_n f\right\|_\HH\,d\mu(n) = \sup_{\|f\|_\HH=1}\sum_{n=1}^m\frac{1}{n}|\scal{f}{e_1}|\cdot\|e_n\|_\HH = \sum_{n=1}^m\frac{1}{n},$$
\begin{equation*}
    \begin{split}
\left\|\AAA^{(m)}\right\|_{\mathbb{G}}&=\sup_{\|f\|_\HH=\|g\|_\HH=1} \int_\N|\scal{\AAA^{(m)}_nf}{g}|\,d\mu(n) = \sup_{\|f\|_\HH=\|g\|_\HH=1} \sum_{n=1}^m\frac{1}{n}\cdot|\scal{f}{e_1}\scal{e_n}{g}|\\
&\leqslant\sup_{\|g\|_\HH=1}\sqrt{\sum_{n=1}^m\frac{1}{n^2}}\cdot\sqrt{\sum_{n=1}^m|\scal{g}{e_n}|^2} = \frac{\pi}{\sqrt{6}}.
    \end{split}
\end{equation*}
Since the harmonic series is divergent, these two norms are not equivalent.
$\hfill\triangle$
\end{example}

\section{Main results on $L^p_s(\Omega, \M, \mu, \mathcalb B(X, Y))$ spaces}\label{MAIN}

In this section we present key results, Theorems \ref{ACmuPb}, \ref{UCA} and \ref{vecmes}. Main point is that a family $\AAA$ in the space $ L_s^1(\Omega, \M, \mu, \BXY)$ generates countably additive $\BXY$-valued measure provided $X^*$ does not contain an isomorphic copy of $\mathfrak{c}_0$. This is achieved through intermediary results, among them Theorem \ref{l1sbuk} has independent interest.
The following proposition will be superseded by Theorem \ref{l1sbuk} below and used in its proof.

\begin{proposition}\label{limnula}
    Assume $X^*$ does not contain an isomorphic copy of $\mathfrak{c}_0$ and let $\AAA\in\ell_s^1(\mathcalb B(X, Y))$. Then $\lim_{n\to\infty} \| \AAA_n \| = 0$.
\end{proposition}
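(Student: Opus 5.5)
Assume, towards a contradiction, that $\|\AAA_n\|\not\to 0$. Passing to a subsequence $(n_k)_{k=1}^\iii$, we may take $\delta>0$ with $\|\AAA_{n_k}\|>\delta$ for all $k$. For each $k$ pick a unit vector $x_k\in X$ with $\|\AAA_{n_k}x_k\|_Y>\delta$, and a unit functional $y_k^*\in Y^*$ with $|\scal{\AAA_{n_k}x_k}{y_k^*}|>\delta$. Set
$$x_k^*=\AAA_{n_k}^*y_k^*\in X^*.$$
Then $\|x_k^*\|_{X^*}\geqslant|\scal{x_k}{x_k^*}|>\delta$, while $\|x_k^*\|_{X^*}\leqslant\|\AAA_{n_k}\|\leqslant\|\AAA\|_1$ by \eqref{normEst} applied to singletons. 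So $(x_k^*)$ is bounded and bounded away from zero.

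The key observation is that $(x_k^*)_{k=1}^\iii\in\ell^1_w(X^*)$. Indeed, for every $x\in X$,
$$\sum_{k=1}^\iii|\scal{x}{x_k^*}|=\sum_{k=1}^\iii|\scal{\AAA_{n_k}x}{y_k^*}|\leqslant\sum_{k=1}^\iii\|\AAA_{n_k}x\|_Y\leqslant\sum_{n=1}^\iii\|\AAA_nx\|_Y<+\iii,$$
and Lemma \ref{ell1slslz} then shows that this sequence is weakly unconditionally Cauchy in $X^*$.

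Now we use the Bessaga--Pe{\l}czy\'nski characterization: a Banach space $Z$ contains no copy of $\mathfrak c_0$ if and only if every weakly unconditionally Cauchy series in $Z$ is unconditionally convergent (see \cite{D}). Applied with $Z=X^*$, the hypothesis gives that $\sum_k x_k^*$ converges unconditionally, so in particular $\|x_k^*\|_{X^*}\to0$. This contradicts $\|x_k^*\|_{X^*}>\delta$.

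The main obstacle is this last step, since only Theorem \ref{c0l1} was stated earlier. If citing Bessaga--Pe{\l}czy\'nski directly is not allowed, we would prove the needed part by hand. Suppose $\sum_k x_k^*$ is weakly unconditionally Cauchy but its terms do not tend to zero in norm. First pass to a subsequence that is seminormalized. The map $T\colon\mathfrak c_0\to X^*$, $T\big((a_k)\big)=\sum a_kx_k^*$, is bounded because the series is weakly unconditionally Cauchy. A gliding hump argument then yields a further subsequence equivalent to the unit vector basis of $\mathfrak c_0$, giving an isomorphic copy of $\mathfrak c_0$ in $X^*$ and the desired contradiction.
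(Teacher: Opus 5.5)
Your proof is correct and follows the paper's argument. Both pull back norming functionals through $\AAA^*$ to produce a sequence in $\ell^1_w(X^*)$ (via Lemma \ref{ell1slslz}), and then use Bessaga--Pe{\l}czy\'nski to force $\|\AAA_n\|\to 0$. The differences are only cosmetic: you argue by contradiction along a subsequence, and you bound $|\scal{\AAA_n x}{y^*}|\leqslant\|\AAA_n x\|_Y$ directly, whereas the paper gets the same bound from the rank-one operators $\BBB_n=y_0\otimes\AAA_n^*y_n^*$.
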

\begin{proof}
We will prove the statement by reducing it to a special case of rank-$1$ operators. Let 
$\AAA\in \ell^1_s(\mathcalb B(X, Y))$ and let $y_n^*$ be unit vectors $Y^*$ such that $\|\AAA_n^* y^*_n\|_{X^*} \geqslant \frac{1}{2}\|\AAA_n^*\| = \frac{1}{2}\|\AAA_n\|$. Next, we choose a unit vector $y_0 \in Y$ and set, for every $n \in \N$, $\BBB_n = (y_0 \otimes y^*_n) \AAA_n$. First, we have $$\|\BBB_n x\|_Y \leqslant \|y_0 \otimes y^*_n\| \cdot \|\AAA_n x\|_Y = \|\AAA_n x\|_Y$$while, due to equality \eqref{tesor}, we have $\BBB_n = y_0\otimes \AAA_n^*y_n^*$ implying $$\|\BBB_n x\|_Y = |(\AAA_n^* y_n^*)(x)|\cdot \|y_0\|_Y = |(\AAA_n^* y_n^*) (x)|.$$ Combining these two facts, we get
\begin{equation}\label{l1wl1s}
    \sum_{n=1}^\iii |(\AAA_n^* y_n^*) (x)| = \sum_{n=1}^\iii \|\BBB_n x\|_Y \leqslant \sum_{n=1}^\infty \|\AAA_n x\|_Y <+\infty,
\end{equation}
for every $x\in X,$ as $\AAA\in \ell^1_s(\mathcalb B(X, Y))$. By taking supremum over $B_X$ in the \eqref{l1wl1s} we get that $\|(\AAA^*_ny^*_n)_{n=1}^\iii\|_{\ell^1_w(X^*)} \leqslant \|\AAA\|_1$. Hence, by Lemma \ref{ell1slslz}, the sequence $(\AAA_n^* y_n^*)_{n=1}^\iii$ belongs to $\ell_w^1 (X^*)$. Finally, due to Bessaga-Pelczynski Theorem (see \cite[Corollary 5, Page 22]{DU}), as $X^*$ does not contain isomorphic copy of $\mathfrak{c}_0$, the series $\sum_{n=1}^\infty \AAA_n^* y_n^*$ converges unconditionally. Hence $\lim_{n\to\infty}\|\AAA_n^* y_n^*\|_{X^*} = 0$ and by the choice of vectors $y_n^*$ we get $\lim_{n\to\infty}\|\AAA_n\|= 0$, as proclaimed.
\end{proof}

The next result shows that the passage from pointwise summability on vectors to unconditional convergence in the operator norm is governed precisely by the geometry of the dual space $X^*$. 
\begin{theorem}\label{l1sbuk} 
The series $\sum_{n=1}^\infty \AAA_n$ converges unconditionally in $\BXY$ for all $\AAA\in\ell^1_s(\BXY)$ if and only if $X^*$ does not contain isomorphic copy of $\mathfrak{c}_0$. 
\end{theorem}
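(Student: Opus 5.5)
We prove the two implications separately, with most of the work in the direction ``$X^*$ contains no copy of $\mathfrak{c}_0$ implies unconditional convergence''. We use the standard criterion that a series $\sum_n T_n$ in a Banach space converges unconditionally if and only if for every increasing sequence of pairwise disjoint finite sets $F_1<F_2<\cdots$ in $\N$ we have $\|\sum_{n\in F_k} T_n\|\to 0$. Equivalently, every block sequence of finite sums tends to zero.

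For the sufficiency, fix $\AAA\in\ell^1_s(\BXY)$ and disjoint finite blocks $F_k$ with $\max F_k<\min F_{k+1}$. Define $\BBB_k=\sum_{n\in F_k}\AAA_n\in\BXY$. For every $x\in X$ the triangle inequality gives
$$\sum_k\|\BBB_k x\|_Y\leqslant\sum_k\sum_{n\in F_k}\|\AAA_nx\|_Y\leqslant\sum_n\|\AAA_nx\|_Y<+\infty.$$
So $\BBB=(\BBB_k)_{k\in\N}$ belongs to $\ell^1_s(\BXY)$, with $\|\BBB\|_1\leqslant\|\AAA\|_1$. Proposition \ref{limnula} applies to $\BBB$ and gives $\|\BBB_k\|\to0$. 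This is exactly the block criterion, so $\sum_n\AAA_n$ converges unconditionally in operator norm. Ordinary convergence, and the identity of the limit with $\int_\N\AAA\,d\mu$, then follow from pointwise convergence on each $x$.

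For the necessity, assume $X^*$ contains a copy of $\mathfrak{c}_0$. We must build $\AAA\in\ell^1_s(\BXY)$ whose operator series is not unconditionally convergent. By the Bessaga--Pe{\l}czy\'nski theorem (\cite[Corollary 5, Page 22]{DU}, used in the contrapositive) there is a sequence $(x_n^*)$ in $X^*$ with $\sum_n|\scal{x}{x_n^*}|<+\infty$ for all $x\in X$ but $\sum_n x_n^*$ not unconditionally convergent. For instance, one can take the images of the unit vectors of $\mathfrak{c}_0$ under the isomorphic embedding, normalized so that $\inf_n\|x_n^*\|>0$. By Lemma \ref{ell1slslz} this weak$^*$ summability on $X$ is the same as membership in $\ell^1_w(X^*)$.

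Fix a unit vector $y_0\in Y$ and set $\AAA_n=y_0\otimes x_n^*$. Then $\|\AAA_nx\|_Y=|\scal{x}{x_n^*}|$, so $\AAA\in\ell^1_s(\BXY)$. On the other hand, $\|\AAA_n\|=\|x_n^*\|$, and more generally $\|\sum_{n\in F}\AAA_n\|=\|\sum_{n\in F}x_n^*\|$ for every finite $F$. Hence unconditional convergence of $\sum_n\AAA_n$ would force unconditional convergence of $\sum_n x_n^*$, a contradiction. In the concrete choice we even have $\|\AAA_n\|\not\to0$.

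The main obstacle is in the necessity direction: making sure the $\mathfrak{c}_0$-copy in $X^*$ yields a weakly unconditionally Cauchy series in the \emph{weak$^*$} sense, i.e.\ tested against $X$ rather than against $X^{**}$. This holds simply because each $x\in X$ defines an element of $X^{**}$, and restricted to the copy of $\mathfrak{c}_0$ that element lies in $\mathfrak{c}_0^*=\ell^1$. Once this is in place, both directions reduce to rank-one bookkeeping and the block reduction to Proposition \ref{limnula}.
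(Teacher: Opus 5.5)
Your proof is correct and follows essentially the same route as the paper. For necessity you use the same rank-one construction $\AAA_n = y_0\otimes x_n^*$, built from a weakly unconditionally Cauchy series in $X^*$ that does not converge unconditionally. For sufficiency you make the same observation that block sums of $\AAA$ again lie in $\ell^1_s(\BXY)$, so Proposition \ref{limnula} forces them to tend to zero. The only cosmetic difference is that you apply the finite-subset Cauchy criterion for unconditional convergence directly. The paper instead uses interval blocks and reduces to ordinary norm convergence via invariance of $\ell^1_s(\BXY)$ under permutations.
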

\begin{proof}
If $X^*$ contains isomorphic copy of $\mathfrak{c}_0$ then by Bessaga-Pelczynski Theorem 
there exists a series $\sum_{n=1}^\infty x_n^*$ in $X^*$ which is not unconditionally norm convergent but $\sum_{n=1}^\infty | \langle x_n^*, x^{**} \rangle | < +\infty$ for all $x^{**} \in X^{**}$. 
Let us fix a nonzero vector $y_0$ in $Y$ and set $\AAA_n = y_0 \otimes x_n^* \in \mathcalb B(X, Y)$ for $n \in \N$.
Since 
$$\sum_{n=1}^\infty \|\AAA_n x\|_Y = \sum_{n=1}^\infty \| \langle x, x_n^* \rangle y_0 \|_Y = \| y_0 \|_Y \cdot \sum_{n=1}^\infty |x_n^*(x)| < +\infty,$$
we conclude that $\AAA\in\ell^1_s(\mathcalb B(X, Y))$. Moreover, 
$(\AAA_{\pi(n)})_{n=1}^\infty$ belongs to $\ell^1_s(\mathcalb B(X, Y))$ for every permutation $\pi : \N \to \N$. Since 
$$\sum_{k=1}^n \AAA_{\pi(k)} = y_0 \otimes \sum_{k=1}^n x_{\pi(k)}^*,$$ 
unconditional norm convergence of $\sum_{n=1}^\infty \AAA_n$ would imply unconditional norm convergence of $\sum_{n=1}^\infty x_n^*$, which is not true.

Now we consider the converse, i.\,e. we assume $X^*$ contains no isomorphic copy of $\mathfrak{c}_0$.
Since for any permutation $\pi$ of $\N$ the family 
$(\AAA_{\pi(n)})_{n=1}^\iii$ is also in $\ell^1_s(\mathcalb B(X, Y))$, it suffices to prove that the series $\sum_{n=1}^{\infty}\AAA_n$ converges in the norm of $\mathcalb B(X, Y)$. Assume, to the contrary, that there exist $\varepsilon>0$ and strictly increasing sequences $(a_k)_{k=1}^\iii$ and $(b_k)_{k=1}^\iii$ of natural numbers such that 
$$a_k\leqslant b_k,\,\,b_k<a_{k+1}\,\,\,\text{and}\,\,\, \left\|\sum\limits_{n=a_k}^{b_k}\AAA_{n}\right\|\geqslant \varepsilon\,\,\,\text{for all}\,\,\,k\in\N.$$ 
Define $\CCC_k = \sum_{n=a_k}^{b_k}\AAA_{n}$ for $k \in \N$.
Then, $\CCC = (\CCC_k)_{k=1}^\infty \in \ell_s^1(\mathcalb B(X, Y))$ since
$$\sum_{k=1}^{\infty}\|\CCC_k x\|_Y = \sum_{k=1}^{\infty} \left\| \sum_{n=a_k}^{b_k}\AAA_n x \right\|_Y \leqslant\sum_{k=1}^{\infty} \sum_{n=a_k}^{b_k} \|\AAA_n x\|_Y \leqslant \sum_{n=1}^{\infty} \|\AAA_n x \|_Y < +\infty$$
for all $x\in X$.
Then, by Proposition \ref{limnula}, $\lim_{k\to\infty}\|\CCC_k\|= 0$
which is impossible since $\|\CCC_k\|\geqslant\varepsilon>0$, $k\in\N$. Thus, the series $\sum_{n=1}^{\iii} \AAA_n$ is norm convergent in $\mathcalb B(X, Y)$.
\end{proof} 

Since a Hilbert space does not contain an isomorphic copy of $\mathfrak{c}_0$ we obtain
\begin{corollary}
For every $\AAA\in\ell_s^1(\BH)$ the series $\sum_{n=1}^\infty\AAA_n$ is unconditionally convergent in the norm of $\BH$.
\end{corollary}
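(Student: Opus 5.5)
The corollary is an immediate specialization of Theorem \ref{l1sbuk} with $X=Y=\HH$. That theorem says that for all $\AAA\in\ell^1_s(\BXY)$ the series $\sum_{n=1}^\infty\AAA_n$ converges unconditionally in operator norm if and only if $X^*$ contains no isomorphic copy of $\mathfrak{c}_0$. So the only thing to verify is that $\HH^*$ contains no isomorphic copy of $\mathfrak{c}_0$.

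By the Riesz representation theorem, $\HH^*$ is (conjugate-linearly) isometric to $\HH$, and is in particular a Hilbert space. It therefore suffices to show that no Hilbert space contains an isomorphic copy of $\mathfrak{c}_0$. I would give the argument via reflexivity. Every Hilbert space is reflexive, and closed subspaces of reflexive spaces are reflexive. Reflexivity is preserved under isomorphisms, and $\mathfrak{c}_0$ is not reflexive, since $\mathfrak{c}_0^{**}=\ell^\infty\neq\mathfrak{c}_0$. Hence no closed subspace of $\HH^*$ is isomorphic to $\mathfrak{c}_0$.

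An alternative within the paper's own framework uses Theorem \ref{c0l1}. $\HH$ is reflexive, so it cannot contain a (complemented) copy of $\ell^1$, because $\ell^1$ is nonreflexive. Theorem \ref{c0l1} then gives that $\HH^*$ contains no copy of $\mathfrak{c}_0$.

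With this hypothesis checked, the sufficiency direction of Theorem \ref{l1sbuk} applies directly to any $\AAA\in\ell^1_s(\BH)$ and yields unconditional norm convergence of $\sum_{n=1}^\infty\AAA_n$ in $\BH$. There is no real obstacle here. The only point needing care is to apply the hypothesis to $X^*=\HH^*$ rather than to $X$ itself, and to note that the identification of $\HH^*$ with $\HH$ is conjugate-linear but still an isometry of real Banach spaces, so it preserves the relevant geometric property.
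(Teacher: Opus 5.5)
Your proposal is correct and matches the paper, which also obtains the corollary by applying Theorem~\ref{l1sbuk} with $X=Y=\HH$ after noting that a Hilbert space contains no isomorphic copy of $\mathfrak{c}_0$. Your reflexivity argument fills in the fact the paper leaves implicit, namely that $\HH^*$ contains no isomorphic copy of $\mathfrak{c}_0$.
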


Under an additional assumption on $X$, Proposition \ref{limnula} can be further refined. More precisely, the next theorem is an extension of Theorem \ref{kotipniz} to sequences of operators.

\begin{theorem}
Assume $X^*$ has cotype $q<+\infty$, 
and let $\AAA\in \ell^1_s (\mathcalb B(X, Y))$. Then $\AAA\in\ell^q (\mathcalb B(X,Y))$ and we have the following estimate, where $C_q = C_q(X^*)$:
\begin{equation}\label{cotype}
\left( \sum_{n=1}^\infty \| \AAA_n \|^q \right)^{1/q} \leqslant  C_q \cdot \left\| \AAA \right\|_1.
\end{equation}
\end{theorem}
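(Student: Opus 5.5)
We follow the reduction to rank-one operators used in the proof of Proposition \ref{limnula}, and then apply Theorem \ref{kotipniz} to the dual space $X^*$ instead of invoking Bessaga--Pelczy\'nski. Fix $\varepsilon\in(0,1)$. For each $n$ choose a unit vector $y_n^*\in Y^*$ with $\|\AAA_n^*y_n^*\|_{X^*}\geqslant(1-\varepsilon)\|\AAA_n^*\|=(1-\varepsilon)\|\AAA_n\|$. Put $x_n^*=\AAA_n^*y_n^*\in X^*$.

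Exactly as in \eqref{l1wl1s}, for every $x\in B_X$ we have
$$\sum_{n=1}^\infty|\scal{x}{x_n^*}|=\sum_{n=1}^\infty|\scal{\AAA_nx}{y_n^*}|\leqslant\sum_{n=1}^\infty\|\AAA_nx\|_Y\leqslant\|\AAA\|_1.$$
This bound only tests the functionals against elements of $X$. Lemma \ref{ell1slslz} shows that $(x_n^*)_{n=1}^\infty\in\ell^1_w(X^*)$. We also need the norm comparison
$$\|(x_n^*)_{n=1}^\infty\|_{\ell^1_w(X^*)}=\sup_{\|x^{**}\|\leqslant1}\sum_{n=1}^\infty|\scal{x_n^*}{x^{**}}|\leqslant\|\AAA\|_1,$$
where the supremum runs over $B_{X^{**}}$. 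To obtain it, reduce to finitely many terms: for fixed $N$ and scalars $|\lambda_n|=1$ we have $\sum_{n\le N}|\scal{x_n^*}{x^{**}}|=|\scal{\sum_{n\le N}\lambda_nx_n^*}{x^{**}}|\leqslant\|\sum_{n\le N}\lambda_nx_n^*\|_{X^*}$. The last norm is a supremum over $x\in B_X$, hence at most $\sup_{x\in B_X}\sum_{n\le N}|\scal{x}{x_n^*}|\leqslant\|\AAA\|_1$. Alternatively, one can use Goldstine's theorem. Letting $N\to\infty$ gives the claim. This is precisely the content of Lemma \ref{ell1slslz}, with norms included.

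Now apply Theorem \ref{kotipniz} to the Banach space $X^*$, which has cotype $q$. It gives
$$\Big(\sum_{n=1}^\infty\|x_n^*\|_{X^*}^q\Big)^{1/q}\leqslant C_q(X^*)\,\|(x_n^*)_{n=1}^\infty\|_{\ell^1_w(X^*)}\leqslant C_q\|\AAA\|_1.$$
By the choice of $y_n^*$ we get $(1-\varepsilon)\big(\sum_n\|\AAA_n\|^q\big)^{1/q}\leqslant C_q\|\AAA\|_1$, and letting $\varepsilon\to0$ yields \eqref{cotype}.

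It remains to check that $\AAA\in\ell^q(\BXY)$ in the Bochner sense. With counting measure on $\N$ every function is strongly measurable, as noted in Subsection \ref{Pgr}, so the finiteness of $\sum\|\AAA_n\|^q$ suffices. The only delicate point is the identification of the $\ell^1_w(X^*)$-norm, which is a supremum over $B_{X^{**}}$, with the supremum over $B_X$; the finite-sum argument above handles it.
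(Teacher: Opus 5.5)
Your proposal is correct and follows essentially the same route as the paper's proof. Both arguments pick norming functionals $y_n^*$, place $(\AAA_n^*y_n^*)_{n=1}^\infty$ in $\ell^1_w(X^*)$ with norm at most $\|\AAA\|_1$, apply Theorem \ref{kotipniz} to $X^*$, and let $\varepsilon\to0$. Your finite-sum argument, showing that the $\ell^1_w(X^*)$-norm (a supremum over $B_{X^{**}}$) is bounded by the supremum over $B_X$, makes explicit a step the paper uses tacitly when it writes $\|(\AAA_n^*y_n^*)_{n=1}^\infty\|_{\ell^1_w(X^*)}=\|\BBB\|_1$.
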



\begin{proof}
Let $\AAA\in\ell^1_s(\mathcalb B(X, Y))$ and $\varepsilon > 0$. We can choose unit vectors $y_n^*\in Y^*$ such that $\|\AAA_n^* y_n^*\|_{X^*} \geqslant \frac{1}{1+\varepsilon}\|\AAA_n\|$. As in the proof of Proposition \ref{limnula} select a unit vector $y_0$ in $Y$ and define operators 
$$\BBB_n = (y_0 \otimes y_n^*) \AAA_n = y_0 \otimes (\AAA_n^* y_n^*)\colon X \to Y.$$
Then $\BBB\in \ell^1_s (\mathcalb B(X,Y))$ with
$\| \BBB\|_1 \leqslant \| \AAA\|_1$. Moreover, using \eqref{l1wl1s} we again get $(\AAA_n^*y_n^*)_{n=1}^\iii \in \ell^1_w (X^*)$ where $\| (A_n^*y_n^*)_{n=1}^\iii \|_{\ell_w^1(X^*)} =  \|\BBB\|_1$. Due to Theorem \ref{kotipniz} we have $(\AAA_n^*y_n^*)_{n=1}^\iii\in\ell^q(X^*)$ and
\begin{equation}\label{comb2}
\left(\sum_{n=1}^\iii\|\AAA_n^*y_n^*\|^q_{X^*}\right)^{1/q}\leqslant C_q\cdot\|(\AAA_n^*y_n^*)_{n=1}^\iii\|_{\ell^1_w(X^*)}=C_q\cdot\|\BBB\|_1\leqslant C_q\cdot\|\AAA\|._1
\end{equation}
Now, as $\|\AAA_n^* y_n^*\| \geqslant \frac{1}{1+\varepsilon}\|\AAA_n\|$ for all $n\in\N$, by the \eqref{comb2} we  get
$$\frac{1}{1+\varepsilon} \left(\sum_{n=1}^\iii \|\AAA_n\|^q\right)^{1/q} \leqslant \left(\sum_{n=1}^\iii \|\AAA_n^* y_n^*\|_{X^*}^q\right)^{1/q} \leqslant C_q \cdot\left\|\AAA\right\|_1.$$
Finally, by letting $\varepsilon\to 0+$ we obtain \eqref{cotype}, completing the proof.
\end{proof}
\begin{remark}
    The previous theorem states that the inclusion $$j\colon\ell^1_s(\BXY)\rightarrow\ell^q(\BXY)$$ is continuous when $X^*$ has a finite cotype $q$. Moreover, due to Remark \ref{kotiptac} we can further estimate $\|j\|\leqslant \pi_{q,1}(I_X)$. Actually, we have $\|j\| = \pi_{q,1}(I_X)$. To see previous equality, for $\varepsilon > 0$ we can find sequence $(x_n^*)_{n=1}^\iii\in\ell^1_w(X^*)$ such that $$\|(x_n^*)_{n=1}^\iii\|_{\ell^1_w(X^*)} = 1,\qquad\left(\sum_{n=1}^\iii\|x_n^*\|^q_{X^*}\right)^{1/q}\geqslant \pi_{q,1}(I_X) -\varepsilon.$$ Next, for all $n\in\N$, we define operators $\AAA_n = x_n^*\otimes y_0$ where $y_0\in Y$ is arbitrary unit vector. Then $\|\AAA_n\| = \|x_n^*\|_{X^*}$ and similarly to \eqref{l1wl1s} we have $\|\AAA\|_1 = \|(x_n^*)_{n=1}^\iii\|_{\ell^1_w(X^*)} = 1$. Therefore, we get inequality
    $$\left(\sum_{n=1}^\iii \|\AAA_n\|^q\right)^{1/q} \geqslant (\pi_{q,1}(I_X) -\varepsilon) \cdot\|\AAA\|_1,$$
    implying that $\|j\|\geqslant\pi_{q,1}(I_X).$
\end{remark}

In the special case $X = Y= \HH$, due to Theorem \ref{Pi21}, the previous Theorem states that $\ell^1_s(\BH)$ is continuously embedded in $\ell^2(\BH)$. More precisely:

\begin{corollary}\label{l1sl2} 
If $\AAA\in\ell^1_s(\BH)$ then we have $$\AAA\in\ell^2(\BH)\,\,\,\text{and}\,\,\,\|\AAA\|_{\ell^2(\BH)}\leqslant\|\AAA\|_1.$$ 
\end{corollary}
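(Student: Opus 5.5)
The corollary follows by specializing the preceding theorem to $X = Y = \HH$ with $q = 2$. The only work is to identify the cotype constant of the relevant dual space and to check the membership claim.

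First, I identify $X^* = \HH^*$ with a Hilbert space. Via the Riesz representation it is conjugate-linearly isometric to $\HH$, and such a map preserves norms of Rademacher sums. So $\HH^*$ has cotype $2$ with $C_2(\HH^*) = 1$, as recorded before Theorem \ref{Pi21}.

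Rather than rely on the general constant $C_q$, I would rerun the short argument of the previous proof using Theorem \ref{Pi21} directly. Given $\varepsilon>0$, choose unit vectors $g_n\in\HH$ with
\[
\|\AAA_n^*g_n\|_\HH\geqslant \tfrac{1}{1+\varepsilon}\|\AAA_n\|.
\]
By \eqref{l1wl1s}, applied with $y_0$ any unit vector, we get
\[
\sum_n|\scal{f}{\AAA_n^*g_n}|\leqslant\sum_n\|\AAA_nf\|_\HH.
\]
Hence $(\AAA_n^*g_n)_{n=1}^\iii\in\ell^1_w(\HH)$ with $\ell^1_w$-norm at most $\|\AAA\|_1$. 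Here Lemma \ref{ell1slslz} is not even needed, since $\HH$ is reflexive and the weak-star and weak conditions coincide.

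Theorem \ref{Pi21} then gives
\[
\Bigl(\sum_n\|\AAA_n^*g_n\|^2\Bigr)^{1/2}\leqslant\|\AAA\|_1,
\]
and so
\[
\Bigl(\sum_n\|\AAA_n\|^2\Bigr)^{1/2}\leqslant(1+\varepsilon)\|\AAA\|_1.
\]
Letting $\varepsilon\to0^+$ yields the stated inequality.

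It remains to confirm that $\AAA\in\ell^2(\BH)$ genuinely holds and not merely that the sum of squared norms is finite. On $\Omega=\N$ with counting measure, every function is strongly measurable, as noted in the preliminaries. So $\ell^2(\BH)$ is exactly the set of sequences with $\sum_n\|\AAA_n\|^2<+\iii$, and the bound just proved establishes membership.

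There is no real obstacle. The only subtle point is the conjugate-linear identification of $\HH^*$ with $\HH$, which must be checked to preserve the $\ell^1_w$ and $\ell^2$ norms; it does, because it is an isometry and only absolute values are involved.
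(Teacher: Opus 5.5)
Your proof is correct and takes the same route as the paper. The paper obtains the corollary by specializing the preceding cotype theorem to $X=Y=\HH$, with the constant $1$ supplied by Theorem \ref{Pi21}; your argument is that proof rerun in the Hilbert setting (near-norming $g_n$, then $|\scal{\AAA_n f}{g_n}|\leqslant\|\AAA_n f\|_\HH$ to land in $\ell^1_w(\HH)$, then Theorem \ref{Pi21}, then $\varepsilon\to0^+$), and your justification of membership in $\ell^2(\BH)$ via strong measurability on $\N$ is also sound.
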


One expects that exponent $2$ is optimal in Corollary \ref{l1sl2}, as cotype of a space is always bounded below by $2$. This is confirmed by the following modification of Example \ref{NeZatvorenost}. 

\begin{example}
Define $\AAA\colon\N\rightarrow\BH$ by the expression 
$$\AAA_n=\frac{1}{\sqrt{n}\ln(n+1)}\cdot e_n\otimes e_n^*, \qquad n \in \N.$$  
Then, for every $f\in\HH$ we have 
$$\sum_{n=1}^\iii\|\AAA_n f\|_\HH = \sum_{n=1}^\iii\frac{|\scal{f}{e_n}|}{\sqrt{n}\ln(n+1)}\leqslant\sqrt{\sum_{n=1}^\iii\frac{1}{n\ln^2(n+1)}}\cdot\|f\|_\HH < +\iii,$$
    implying that $\AAA\in\ell^1_s(\BH)$. On the other hand, for every $p\in[1,2)$ we have
$$\sum_{n=1}^\iii\|\AAA_n\|^p=\sum_{n=1}^\iii\frac{1}{n^\frac{p}{2}\ln^p(n+1)}=+\iii,$$
    since $\frac{p}{2}<1$, proving that $\AAA\notin\ell^p(\BH).$ \hfill$\triangle$
 \end{example}

\subsection{Induced vector measures}

By Proposition \ref{wkstcameasure}, every $\AAA$ in the space $\GG(\WWW,\M,\mu,\BH)$ induces a weakly$^*$ countably additive $\BH$-valued measure $\mu_\AAA$ defined by \eqref{meranu}. 
Also, $\mu_\AAA$ is countably additive if $\AAA\in L^p_s(\WWW,\M,\mu,\mathcalb{B}(X,Y))$, provided $\mu(\WWW)<+\iii$ and $p>1$, see \cite[Proposition~3.2]{BJN}. 
Moreover, the same is true for $p=1$ with the additional assumption of uniform integrability of the family $\{\AAA x:\|x\|_X\leqslant1\}$ in $L^1(\WWW,\M,\mu,Y)$, see \cite[Remark~3.3]{BJN}. 

Let us recall that a subset $K$ of $L^1(\WWW,\M,\mu,X)$ is called \textit{uniformly integrable} (see \cite[Page~101]{DU}) if 
$$\lim_{\mu(E)\rightarrow0}\sup_{f\in K}\int_E\|f\|_X\,d\mu=0,$$
i.e. if for every $\varepsilon>0$ there exists $\delta>0$ such that for every $\mu(E)<\delta$ and $f\in K$ holds $\int_E\|f\|d\mu<\varepsilon$. Note, that this is equivalent to uniform integrability of family of scalar functions $\{\|f\|:f\in K\}$.

Theorem \ref{ACmuPb} below shows that countable additivity of $\nu_\AAA$ can be concluded if the finiteness of measure $\mu$ and uniform integrability are replaced with a mild condition on $X^*$. 
Moreover, the condition of uniform integrability of the family $\{\AAA x : \| x \|_X \leqslant 1\}$ is apriori satisfied for $\AAA \in L^1_s(\Omega, \mu, \BXY)$ if $X$ does not contain isomorphic copy of $\ell^1$, see Corollary \ref{UINT} below.

\begin{theorem}\label{ACmuPb}
Assume $X^*$ does not contain an isomorphic copy of $\mathfrak{c}_0$. Then, for every element $\AAA$ in the space $ L_s^1(\Omega,\M,\mu,\BXY)$, the induced vector measure $\nu_\AAA:\M\to\BXY$ defined by the expression \begin{equation}\label{nuA}\nu_\AAA(E) = \int_{E}\AAA\,d\mu\,\,\,\text{for all}\,\,\, E\in\M\end{equation} is countably additive.
\end{theorem}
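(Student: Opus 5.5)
The plan is to reduce countable additivity of $\nu_\AAA$ to the discrete result, Theorem \ref{l1sbuk}. Finite additivity of $\nu_\AAA$ is clear from \eqref{Ulazakx}, since the Bochner integral of each $\AAA x$ is additive in the set. So we only have to show that for every sequence $(E_n)_{n=1}^\iii$ of disjoint sets in $\M$ with $E=\bigsqcup_{n=1}^\iii E_n$ we get
$$\nu_\AAA(E)=\sum_{n=1}^\iii \nu_\AAA(E_n)$$
in the operator norm of $\BXY$.

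Fix such a sequence and define the discrete family $\CCC=(\CCC_n)_{n=1}^\iii$ by $\CCC_n=\nu_\AAA(E_n)=\int_{E_n}\AAA\,d\mu\in\BXY$. We first check that $\CCC\in\ell^1_s(\BXY)$. For each $x\in X$, the estimate \eqref{normEst}, applied vectorwise, gives $\|\CCC_n x\|_Y\leqslant\int_{E_n}\|\AAA_t x\|_Y\,d\mu(t)$. Summing over $n$,
$$\sum_{n=1}^\iii\|\CCC_nx\|_Y\leqslant\int_E\|\AAA_tx\|_Y\,d\mu(t)\leqslant \|\AAA\|_1\|x\|_X<+\iii.$$
Here pointwise measurability on $\N$ is automatic, since every function on $\N$ is strongly measurable for the counting measure. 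Because $X^*$ contains no copy of $\mathfrak{c}_0$, Theorem \ref{l1sbuk} now shows that $\sum_{n=1}^\iii\CCC_n$ converges unconditionally in operator norm to some $S\in\BXY$.

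It remains to identify the limit as $S=\nu_\AAA(E)$. Norm convergence implies strong convergence, so $Sx=\sum_{n=1}^\iii\CCC_nx$ in $Y$ for every $x$. On the other hand, the Bochner indefinite integral of $\AAA x\in L^1(\Omega,\mu,Y)$ is countably additive, as recalled in Subsection \ref{Pgr}. Hence $\sum_{n=1}^\iii\CCC_n x=\prescript{\mathrlap{B}}{}{\int_{E}}\AAA x\,d\mu=\nu_\AAA(E)x$. Therefore $S=\nu_\AAA(E)$ and the series converges in norm, which is the claimed countable additivity.

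The only substantial step is the passage from strong to norm convergence, and Theorem \ref{l1sbuk} already does that work. What needs care is that the assumption on $X^*$ is used precisely there, and that neither finiteness nor $\sigma$-finiteness of $\mu$ enters anywhere: the argument touches only the disjoint sets $E_n$ and integrability of each $\AAA x$.
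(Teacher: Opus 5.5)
Your proposal is correct and matches the paper's proof essentially step for step. You show $(\nu_\AAA(E_n))_{n=1}^\iii\in\ell^1_s(\BXY)$ via the vectorwise estimate, invoke Theorem \ref{l1sbuk} to get norm convergence of the series, and identify the limit with $\nu_\AAA(E)$ through strong convergence, which follows from countable additivity of the Bochner integrals of each $\AAA x$.
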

\begin{proof}
Let $(E_n)_{n=1}^\iii$ be arbitrary sequence of mutually disjoint sets in the $\sigma$-algebra $\M$ and let $E$ be the union of these sets. Now, for all $x\in X$, 
using \eqref{Ulazakx} we obtain
\begin{equation*}
\begin{split}
\sum_{n=1}^{\infty} \|\nu_\AAA(E_n) x \|_Y & = \sum_{n=1}^{\infty} \left\| \left( \int_{E_n} \AAA\,d\mu \right) x \right\|_Y = \sum_{n=1}^{\infty} \left\| \prescript{\mathrlap{B}}{}{\int_{E_n}} \AAA_t x \,d\mu(t) \right\|_Y\\ &
\leqslant \sumn\int_{E_n} \| \AAA_t x \|_Y\,d\mu(t) \leqslant \int_\WWW \| \AAA_t x \|_Y \,d\mu(t) <+\iii,
\end{split}
\end{equation*}
as $\AAA$ is strongly integrable.
Thus $(\nu_\AAA(E_n))_{n=1}^\iii\in \ell_s^1(\BXY)$, so by Theorem \ref{l1sbuk} the series $\sum_{n=1}^{\infty}\nu_\AAA(E_n)$ converges (unconditionally) in  norm of $\BXY$. 
However, the same series  converges strongly to the operator $\nu_\AAA (E)$ in $\BXY$
because $\AAA x\in L^1(\Omega,\mu,Y)$ for every $x\in X$. From the uniqueness of limit we get  
$$\nu_\AAA\left(\bigsqcup\limits_{n=1}^{\infty}E_n\right)=\sum\limits_{n=1}^{\infty}\nu_\AAA(E_n),$$
where the last series converges in the norm of $\BXY$.
\end{proof}

\begin{corollary}\label{AzvezdaMera}
Let $\AAA\in L^1_s(\WWW,\M,\mu,\BXY)$, where
$X^*$ does not contain $\mathfrak{c}_0$. Then, the vector measure $\nu_{\AAA^*}\colon\M\rightarrow\mathcalb{B}(Y^*,X^*)$ defined as in \eqref{nuA} is countably additive.
\end{corollary}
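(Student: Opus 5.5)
The plan is to reduce the statement to Theorem \ref{ACmuPb} through the adjoint identity of Lemma \ref{adjungovani}, and then to use that taking adjoints is an isometry from $\BXY$ into $\mathcalb B(Y^*,X^*)$. We do not try to show that $\AAA^*$ lies in $L^1_s(\WWW,\mu,\mathcalb B(Y^*,X^*))$; Example \ref{NeZatvorenost} shows this can fail. Instead we work only with the Gelfand integrals of $\AAA^*$.

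First, since $\AAA\in L^1_s(\WWW,\M,\mu,\BXY)$, Lemma \ref{adjungovani} gives $\AAA^*\in\GG(\WWW,\mu,\mathcalb B(Y^*,X^*))$. Hence $\nu_{\AAA^*}(E)=\int_E\AAA^*\,d\mu$ is well defined for every $E\in\M$, and
$$\nu_{\AAA^*}(E)=\left(\int_E\AAA\,d\mu\right)^*=\nu_\AAA(E)^*,\qquad E\in\M.$$
So $\nu_{\AAA^*}=J\circ\nu_\AAA$, where $J\colon\BXY\to\mathcalb B(Y^*,X^*)$ is $J(T)=T^*$. The map $J$ is linear, because it is the Banach-space adjoint with no conjugation. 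It is also isometric, since $\|T^*\|=\|T\|$.

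Second, we use the hypothesis that $X^*$ contains no copy of $\mathfrak{c}_0$. By Theorem \ref{ACmuPb}, $\nu_\AAA$ is countably additive in the operator norm. Take a disjoint sequence $(E_n)_{n=1}^\iii$ in $\M$ with union $E$. Then
$$\left\|\nu_{\AAA^*}(E)-\sum_{n=1}^N\nu_{\AAA^*}(E_n)\right\|=\left\|\Big(\nu_\AAA(E)-\sum_{n=1}^N\nu_\AAA(E_n)\Big)^*\right\|=\left\|\nu_\AAA(E)-\sum_{n=1}^N\nu_\AAA(E_n)\right\|\to0.$$
This is exactly norm countable additivity of $\nu_{\AAA^*}$. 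Finite additivity and $\nu_{\AAA^*}(\emptyset)=0$ are immediate, because $J$ is linear.

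No step is a real obstacle. The one point to watch is that $\nu_{\AAA^*}$ must be understood through Gelfand integrals and not through \eqref{Ulazakx}, since $\AAA^*$ need not be strongly integrable. Lemma \ref{adjungovani} is exactly what makes this interpretation legitimate and identifies these integrals with the adjoints of $\nu_\AAA(E)$.
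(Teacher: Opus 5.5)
Your proposal is correct and follows essentially the same route as the paper: identify $\nu_{\AAA^*}(E)=\nu_\AAA(E)^*$ via Lemma \ref{adjungovani}, invoke Theorem \ref{ACmuPb} for norm countable additivity of $\nu_\AAA$, and transfer norm convergence through the isometric linear map $T\mapsto T^*$. Your remark that $\int_E\AAA^*\,d\mu$ has to be read as a Gelfand integral, not through \eqref{Ulazakx}, is exactly the interpretation the paper relies on.
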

\begin{proof}
Let $E=\bigsqcup_{n=1}^\iii E_n$ for some sequence $(E_n)_{n=1}^\iii$ of mutually disjoint elements in the $\sigma$-algebra $\M$. Now, due to Lemma \ref{adjungovani} we have
\begin{multline}\label{AzvezdaPRBR}
\nu_{\AAA^*}\left(\bigsqcup_{n=1}^\iii E_n\right)=\int_E\AAA^*\,d\mu=\left(\int_E\AAA\, d\mu\right)^*=(\nu_\AAA(E))^*=\left(\sumn\nu_\AAA(E_n)\right)^*\\=\sumn(\nu_\AAA(E_n))^*=\sumn\left(\int_{E_n}\AAA \,d\mu\right)^*=\sumn\int_{E_n}\AAA^*\,d\mu=\sumn\nu_{\AAA^*}(E_n),
\end{multline}
where the countable additivity of $\AAA$ follows from Theorem \ref{ACmuPb}.  The first equality in the second line of    \eqref{AzvezdaPRBR} follows from the fact that conjugation preserves the norm convergence.
\end{proof}

Whenever $X^*$ contains an isomorphic copy of $\mathfrak{c}_0$ there is a family in $\ell^1_s(\BXY)$ which does not generate countably additive o. v. measure. This follows from  the proof of Theorem \ref{l1sbuk}.

Theorem \ref{ACmuPb} is one of the main results of the paper. We will outline another proof under somewhat stronger assumptions, based on different techniques, at the end of this subsection. First, since $\nu_\AAA$ is countably additive and vanishes on sets of measure zero, we can apply Theorem \ref{petismu}. This gives the following corollary.
\begin{corollary}
Under the assumptions of Theorem \ref{ACmuPb}, we have $\nu_\AAA \ll \mu$.
\end{corollary}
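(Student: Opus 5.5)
The plan is to apply Theorem \ref{petismu} to the vector measure $\nu_\AAA\colon\M\to\BXY$. That theorem says a countably additive vector measure is $\mu$-continuous exactly when it vanishes on $\mu$-null sets, and the remark preceding it notes that finiteness of $\mu$ is not needed. So the proof reduces to two checks: countable additivity of $\nu_\AAA$ in the operator norm, and vanishing of $\nu_\AAA$ on null sets.

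Countable additivity is precisely the content of Theorem \ref{ACmuPb}. The hypothesis that $X^*$ contains no copy of $\mathfrak{c}_0$ is used only here.

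For vanishing on null sets, I will argue pointwise. Take $E\in\M$ with $\mu(E)=0$ and any $x\in X$. By \eqref{Ulazakx}, $\nu_\AAA(E)x$ is the Bochner integral of $\AAA x$ over $E$, so its norm is at most $\int_E\|\AAA_t x\|_Y\,d\mu(t)=0$. Hence $\nu_\AAA(E)x=0$ for every $x$, that is, $\nu_\AAA(E)=0$. Equivalently, one can apply the estimate \eqref{normEst} with $\AAA$ replaced by $\chi_E\AAA$, which gives $\|\nu_\AAA(E)\|\leqslant\|\chi_E\AAA\|_1=0$.

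With both conditions in hand, Theorem \ref{petismu} yields the following: for every $\varepsilon>0$ there is $\delta>0$ such that $\mu(E)<\delta$ implies $\|\nu_\AAA(E)\|<\varepsilon$, which is $\nu_\AAA\ll\mu$. I expect no real obstacle. The only delicate point is that $\mu$ may be infinite or non-$\sigma$-finite, and the paper has already stated that Theorem \ref{petismu} holds without assuming $\mu(\Omega)<+\infty$.

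If one wanted a self-contained check of that point, I would argue by contradiction. Suppose there are sets $E_n$ with $\mu(E_n)<2^{-n}$ and $\|\nu_\AAA(E_n)\|\geqslant\varepsilon$. Set $F_k=\bigcup_{n\geqslant k}E_n$, which decrease to a null set $F$. Since every subset of a set $F_k$ has measure less than $2^{1-k}$, all the relevant sets have finite measure. I would then use the Rybakov/Bartle–Dunford–Schwartz control-measure argument, or uniform countable additivity over the subsets of $F_1$, to reach a contradiction. In the write-up, however, I would simply cite Theorem \ref{petismu}.
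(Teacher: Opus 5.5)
Your proposal is correct and is the paper's argument: countable additivity from Theorem \ref{ACmuPb}, vanishing of $\nu_\AAA$ on $\mu$-null sets, and then Theorem \ref{petismu}, which holds without assuming $\mu(\Omega)<+\infty$. The paper states the vanishing on null sets without detail, and your pointwise Bochner estimate $\|\nu_\AAA(E)x\|_Y\leqslant\int_E\|\AAA_t x\|_Y\,d\mu(t)=0$ is a valid justification of it.
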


The next two theorems give deeper insight into families $\AAA$ in the normed space $L^1_s(\Omega, \mu, \mathcalb B (X,Y))$, under assumption that $X$ contains no isomorphic copy of $\ell^1$.
The first one (Theorem \ref{UCA}) shows that family $\{ \| \AAA_t x \|_Y \, d\mu(t) : x \in B_X \}$ of measures is uniformly countably additive.  
The second one (Theorem \ref{vecmes}), which follows from the first one, states that $E \mapsto \chi_E \AAA$ is an $L^1_s(\Omega, \M, \mu, \BXY)$-valued vector measure, and its Corollary  \ref{UINT} tells us that $\{ \| \AAA x \|_Y : x \in B_X \}$ is a uniformly integrable family. This corollary allows us to give a different proof of Theorem \ref{ACmuPb} for spaces $X$ that contain no isomorphic copy of $\ell^1$.


For every $x\in X$ we have a finite measure $\mu_x\colon\M\rightarrow[0,+\iii)$ defined by \begin{equation}\label{mux}\mu_x(E)=\int_E\|\AAA_tx\|_Y\,d\mu(t)\,\,\,\text{for all}\,\,\, E\in\M.\end{equation}

\begin{theorem}\label{UCA}
Let $\AAA\in L^1_s(\Omega, \mu, \mathcalb B (X,Y))$, where $X$ does not contain a copy of $\ell^1$. Then, the family of measures $\mathcalb{F}=\{\mu_x:\|x\|_X = 1\}$ is uniformly countably additive.
\end{theorem}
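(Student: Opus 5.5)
We argue by contradiction, using the reformulation \eqref{UCAdef}. If $\mathcalb F$ is not uniformly countably additive, there are $\varepsilon>0$, a disjoint sequence $(E_k)_{k=1}^\iii$ in $\M$ and unit vectors $x_n$ with $\mu_{x_n}\bigl(\bigsqcup_{k>n}E_k\bigr)\geqslant\varepsilon$. Since $\mu_{x_n}$ is a finite measure, each of these tails can be truncated to a finite union with mass still $\geqslant\varepsilon/2$. Passing to a subsequence, we obtain pairwise disjoint sets $F_j$ (finite unions of blocks of the $E_k$, lying further and further out) and unit vectors $x_j$ with $\mu_{x_j}(F_j)\geqslant\varepsilon/2$.

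By Theorem \ref{CWC} ($X$ contains no $\ell^1$) we may assume that $(x_j)$ is weakly Cauchy. For each fixed $x$, $\mu_x$ is a finite countably additive measure and the $F_j$ are disjoint, so $\mu_x(F_j)\to0$. By a gliding hump we choose $j_1<j_2<\cdots$ such that
$$\mu_{x_{j_{2i-1}}}(F_{j_{2i}})<\varepsilon/8 .$$
Set $z_i=x_{j_{2i}}-x_{j_{2i-1}}$ and $G_i=F_{j_{2i}}$. Then $(z_i)$ is weakly null, $\|z_i\|_X\leqslant2$, the sets $G_i$ are disjoint, and by the triangle inequality
$$\int_{G_i}\|\AAA_tz_i\|_Y\,d\mu(t)\geqslant \frac{\varepsilon}{2}-\frac{\varepsilon}{8}\geqslant\frac{\varepsilon}{4}.$$

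Now consider the matrix $a_{ik}=\int_{G_i}\|\AAA_tz_k\|_Y\,d\mu(t)\geqslant0$. Its column sums are bounded: $\sum_i a_{ik}\leqslant\int_\Omega\|\AAA_t z_k\|_Y\,d\mu\leqslant 2\|\AAA\|_1$ by Theorem \ref{CGT}. Hence $\nu_k(S)=\sum_{i\in S}a_{ik}$ is a uniformly bounded sequence of positive measures on $\mathcal P(\N)$. Rosenthal's disjointification lemma then yields, for $\delta=\varepsilon/8$, an infinite set $M\subset\N$ with
$$\sum_{i\in M,\,i\neq k}a_{ik}<\delta \quad\text{for all } k\in M .$$
This is the step I expect to be the main obstacle. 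The naive gliding hump only controls the interaction of a given $z_k$ with the \emph{later} sets $G_i$, not with the earlier ones. Rosenthal's lemma is exactly what removes both kinds of cross terms simultaneously.

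With $M$ in hand, take finitely many scalars $(c_k)_{k\in M}$. By \eqref{normEst}-type estimates and disjointness of the $G_i$,
$$\|\AAA\|_1\Bigl\|\sum_k c_kz_k\Bigr\|_X\geqslant\sum_{i\in M}\int_{G_i}\Bigl\|\AAA_t\sum_k c_kz_k\Bigr\|_Y d\mu\geqslant\sum_{k}|c_k|\Bigl(a_{kk}-\sum_{i\in M,\,i\neq k}a_{ik}\Bigr)\geqslant\frac{\varepsilon}{8}\sum_k|c_k|.$$
Together with $\|\sum_k c_kz_k\|_X\leqslant2\sum_k|c_k|$, this shows that $(z_k)_{k\in M}$ is equivalent to the unit vector basis of $\ell^1$. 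But the $\ell^1$ basis is not weakly null, whereas $(z_k)$ is weakly null. This contradiction proves that $\mathcalb F$ is uniformly countably additive.
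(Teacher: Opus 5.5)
Your proof is correct and is essentially the paper's argument: negate uniform countable additivity to obtain disjoint sets on which unit vectors carry mass $\geqslant\varepsilon$, use Rosenthal's disjointification lemma to control the off-diagonal terms, and integrate over the disjoint sets to get a lower $\ell^1$-estimate. Your detour through a weakly Cauchy subsequence and the differences $z_i$ is unnecessary, though. Applying Rosenthal's lemma directly to $(x_j)$ and $(F_j)$, where $\mu_{x_j}(F_j)\geqslant\varepsilon/2$, already gives a subsequence of $(x_j)$ equivalent to the unit vector basis of $\ell^1$, which contradicts the hypothesis on $X$ outright, as in the paper.
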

\begin{proof}
We argue by contradiction. As $\mathcalb{F}$ is not uniformly countably additive, due to \cite[Proposition~17, Page~8]{DU}, there exist $\varepsilon > 0$, a sequence $(E_n)_{n=1}^\iii$ of disjoint measurable sets and unit vectors $(x_n)_{n=1}^\iii$ in $X$ such that
$$\mu_{x_n}(E_n) = \int_{E_n}\|\AAA_tx_n\|_Y d\mu(t)\geqslant\varepsilon, \qquad n \in \N.$$
The family $\mathcalb{F}$ is uniformly bounded, as $\mu_x(\WWW)\leqslant\|\AAA\|_1,$ for every $\|x\|=1$. Hence, by Rosenthal's Lemma \cite[Lemma~1, Page 18]{DU}, there is a subsequence $(n_k)_{k=1}^\iii$ such that
$$\int_{\bigsqcup_{j\neq k} E_{n_j}}\|\AAA_tx_{n_k}\|_Y d\mu(t) = \mu_{x_{n_k}} \left(\bigsqcup_{j\neq k} E_{n_j}\right)<\frac{\varepsilon}{2}, \qquad k \in \N.$$
Let $\gamma\in\ell^1$ be arbitrary. The same calculation as in the proof of \cite[Theorem 4, Page 104]{DU}  (where $f_k(t)=\AAA_tx_{n_k}$, for $k\in\N$) provides us with an estimate
    \begin{equation}\label{epspola}
        \int_\WWW\left\|\sum_{k=1}^\iii \gamma_k\AAA_t x_{n_k} \right\|_Yd\mu(t) \geqslant\frac{\varepsilon}{2} \sum_{k=1}^\iii |\gamma_k|.
    \end{equation}
    On the other hand, as $\AAt$ is bounded and $\sum_{n=1}^\iii \gamma_k x_{n_k}$ (absolutely) converges in $X$, we also get $\sum_{k=1}^\iii \gamma_k \AAt x_{n_k} = \AAt (\sum_{k=1}^\iii \gamma_k x_{n_k})$ for every $t\in\WWW$. Next, 
    $$\int_\WWW \left\|\AAA_t \left(\sum_{k=1}^\iii \gamma_k x_{n_k}\right)\right\|_Y d\mu(t) \leqslant \|\AAA\|_1\cdot \left\|\sum_{k=1}^\iii \gamma_k x_{n_k}\right\|_X,$$
 and \eqref{epspola} imply that
 \begin{equation}\label{below}
     \left\|\sum_{k=1}^\iii \gamma_k x_{n_k}\right\|_X \geqslant\frac{\varepsilon}{2\|\AAA\|_1}\cdot\sum_{k=1}^\iii |\gamma_k|.
 \end{equation}
Furthermore, we have
\begin{equation}\label{above}
    \left\|\sum_{k=1}^\iii \gamma_k x_{n_k}\right\|_X\leqslant\sum_{k=1}^\iii \|\gamma_k x_{n_k}\|_X=\sum_{k=1}^\iii |\gamma_k|,
\end{equation}
   Estimates \eqref{below} and \eqref{above} produce a basic sequence $(x_{n_k})_{k=1}^\infty$ in $X$ which spans an isomorphic copy of $\ell^1$, which is a contradiction, as $X$ does not contain $\ell^1$.
\end{proof}

Theorem \ref{vecmes} below provides a curious phenomenon: a countably additive vector measure with values in a normed space which is not necessarily complete. 

\begin{theorem}\label{vecmes}
    Let $\AAA \in L^1_s(\Omega, \mu, \mathcalb B (X,Y))$, where $X$ does not contain a copy of $\ell^1$. 
    Then the mapping $\lambda_\AAA : \M \to L^1_s(\Omega, \mu, \mathcalb B (X,Y))$ defined by $\lambda_\AAA (E) = \chi_E \AAA$ is a countably additive $\mu$-continuous vector measure with values in $L^1_s(\Omega, \mu, \mathcalb B (X,Y))$.
\end{theorem}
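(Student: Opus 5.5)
First I check that $\lambda_\AAA$ is well defined and finitely additive. For $E\in\M$ the family $\chi_E\AAA$ is pointwise $\mu$-measurable. It is strongly integrable, since $\int_\WWW\|\chi_E(t)\AAA_t x\|_Y\,d\mu(t)=\mu_x(E)\leqslant\|\AAA\|_1\|x\|_X$ with $\mu_x$ as in \eqref{mux}. Hence $\lambda_\AAA(E)\in L^1_s(\Omega,\mu,\BXY)$ and $\|\lambda_\AAA(E)\|_1\leqslant\|\AAA\|_1$. Moreover $\lambda_\AAA(\emptyset)=0$, and $\chi_{A\sqcup B}\AAA=\chi_A\AAA+\chi_B\AAA$ pointwise for disjoint $A,B$, so $\lambda_\AAA$ is a vector measure. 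The key identity, valid for every $F\in\M$, is
$$\|\lambda_\AAA(F)\|_1=\sup_{\|x\|_X=1}\int_F\|\AAA_t x\|_Y\,d\mu(t)=\sup_{\|x\|_X=1}\mu_x(F).$$
It reduces all further norm estimates for $\lambda_\AAA$ to the scalar family $\mathcalb F=\{\mu_x:\|x\|_X=1\}$.

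For countable additivity, let $(E_n)_{n=1}^\iii$ be disjoint sets in $\M$ with union $E$, and set $F_N=\bigsqcup_{n>N}E_n$. By finite additivity, $\lambda_\AAA(E)-\sum_{n=1}^N\lambda_\AAA(E_n)=\lambda_\AAA(F_N)$, and by the identity above its norm is $\sup_{\|x\|_X=1}\mu_x(F_N)$. Theorem \ref{UCA} states that $\mathcalb F$ is uniformly countably additive, so by \eqref{UCAdef} this supremum tends to $0$ as $N\to\iii$. Hence $\sum_n\lambda_\AAA(E_n)$ converges to $\lambda_\AAA(E)$ in the norm $\|\cdot\|_1$. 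No completeness of $L^1_s$ is needed, because the limit $\chi_E\AAA$ is exhibited explicitly; this is exactly the phenomenon mentioned before the statement.

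The remaining and main point is $\mu$-continuity. Theorem \ref{petismu} cannot be quoted directly, since its standard proof lives in Banach spaces. I would therefore either embed $L^1_s$ isometrically into its completion $Z$, where $\lambda_\AAA$ stays countably additive and still vanishes on $\mu$-null sets (if $\mu(F)=0$ then $\chi_F\AAA x=0$ a.e. for all $x$, so $\chi_F\AAA\sim0$), and then apply Theorem \ref{petismu} in $Z$, since $\mu$-continuity only involves norms. Or I would argue directly, by contradiction. Suppose there are $\varepsilon>0$, sets $G_n$ with $\mu(G_n)<2^{-n}$ and unit vectors $x_n$ with $\mu_{x_n}(G_n)\geqslant\varepsilon$. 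Put $H_m=\bigcup_{n\geqslant m}G_n$; these decrease to a null set $H$. Then $F_m=H_m\setminus H$ decreases to $\emptyset$ and satisfies $\sup_x\mu_x(F_m)\geqslant\mu_{x_m}(G_m\setminus H)=\mu_{x_m}(G_m)\geqslant\varepsilon$, using $\mu_x(H)=0$. This contradicts \eqref{UCAekv} for $\mathcalb F$, which again comes from Theorem \ref{UCA}. The direct route is self-contained, so it is the one I would write out. It also shows directly that $\sup_{\|x\|_X=1}\mu_x(G)\to0$ as $\mu(G)\to0$, which is the uniform integrability behind Corollary \ref{UINT}.
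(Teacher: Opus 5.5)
Your proposal is correct and takes essentially the same route as the paper: countable additivity comes from Theorem~\ref{UCA} via \eqref{UCAdef}, and $\mu$-continuity is proved by contradiction using the tails $\bigcup_{n\geqslant m}G_n$ and \eqref{UCAekv}. Removing the null set $H=\bigcap_m H_m$ is a genuine small improvement, because the paper applies \eqref{UCAekv} to sets that only decrease to a $\mu$-null set rather than to $\emptyset$.
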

\begin{proof}
    Clearly $\lambda_\AAA$ is finitely additive. Assume $E$ is a disjoint union of sets $E_n$ in $\M$. From the definition of norm in $L^1_s(\Omega, \mu, \mathcalb B (X,Y))$, Theorem \ref{UCA} and \eqref{UCAdef} we have the following equality:
    \begin{equation*}
        \begin{split}
            \lim_{n\to\infty} \left\| \lambda_\AAA (E) \!-\! \sum_{k=1}^n \lambda_\AAA (E_k) \right\|_{1}\!\!& = \lim_{n\to\infty} \sup_{\| x \|_X = 1} \int_{\bigsqcup\limits_{k=n}^\iii E_k} \!\| \AAA_t x \|_Y \, d\mu (t) = 0.
        \end{split}  
    \end{equation*}

Moreover, if $\lambda_\AAA$ is not $\mu$-continuous, then there exist $\varepsilon > 0$ and a sequence of sets $E_n$ in $\M$ such that $\mu(E_n) \leqslant\frac{1}{2^n}$ and $\|\lambda(E_n)\|_1 \geqslant \varepsilon$. 
Set $F_n = \bigcup_{k=n}^\iii E_k$, then we have $\mu(F_n) \leqslant \frac{1}{2^{n-1}}$ and $\|\lambda_\AAA(F_n)\|_1\geqslant \|\lambda_\AAA(E_n)\|_1\geqslant\varepsilon$. But, Theorem \ref{UCA} and \eqref{UCAekv} imply $\lim_{n\to\infty}\|\lambda_\AAA(F_n)\|_1 = 0$ and we arrived at a contradiction.
 \end{proof}

\begin{corollary}\label{UINT}
Let $\AAA \in L^1_s(\Omega, \mu, \mathcalb B (X,Y))$, where $X$ does not contain a copy of $\ell^1$. Then the family of scalar functions $\{\|\AAA x\|_Y:\|x\|_X\leqslant R\}$ is uniformly integrable, for every $R>0$.
\end{corollary}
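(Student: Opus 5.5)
The plan is to derive the corollary directly from the $\mu$-continuity of the vector measure $\lambda_\AAA$ in Theorem \ref{vecmes}. A first reduction uses homogeneity: for $\|x\|_X\leqslant R$ we have $\|\AAA_t x\|_Y = \|x\|_X\,\|\AAA_t (x/\|x\|_X)\|_Y\leqslant R\,\|\AAA_t u\|_Y$ with $\|u\|_X=1$ (the case $x=0$ is trivial). It therefore suffices to treat $R=1$ and unit vectors, since multiplying by $R$ preserves uniform integrability.

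For unit vectors, fix $\varepsilon>0$. By Theorem \ref{vecmes}, $\lambda_\AAA$ is $\mu$-continuous, so there is $\delta>0$ such that $\mu(E)<\delta$ implies $\|\lambda_\AAA(E)\|_1<\varepsilon$. By the definition \eqref{pBnorm} of the norm with $p=1$,
$$\|\lambda_\AAA(E)\|_1=\|\chi_E\AAA\|_1=\sup_{\|x\|_X=1}\int_E\|\AAA_t x\|_Y\,d\mu(t).$$
Hence $\mu(E)<\delta$ gives $\sup_{\|x\|_X=1}\int_E\|\AAA_t x\|_Y\,d\mu(t)<\varepsilon$, which is exactly the definition of uniform integrability recalled before Theorem \ref{ACmuPb}. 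Combined with the scaling step, the same $\delta$ works for the ball of radius $R$ with $\varepsilon$ replaced by $R\varepsilon$.

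Each function $\|\AAA x\|_Y$ is $\mu$-measurable and integrable, since $\AAA x\in L^1(\Omega,\mu,Y)$. Consequently the family lies in $L^1(\mu)$, and the definition applies. If one also wants uniform $L^1$-boundedness, it is immediate: $\int_\Omega\|\AAA_t x\|_Y\,d\mu\leqslant R\|\AAA\|_1$. No real obstacle is expected, because all the substance sits in Theorem \ref{UCA} and Theorem \ref{vecmes}. The only point needing care is confirming that the norm of $\chi_E\AAA$ really is this supremum over $E$, which follows directly from the definitions.
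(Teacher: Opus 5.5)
Your proposal is correct and matches the paper's own argument. The paper also uses the $\mu$-continuity of $\lambda_\AAA$ from Theorem \ref{vecmes}, unpacks $\|\chi_E\AAA\|_1$ as $\sup_{\|x\|_X=1}\int_E\|\AAA_t x\|_Y\,d\mu(t)$, and passes to the ball of radius $R$ via $\|\AAA_t x\|_Y\leqslant R\,\|\AAA_t(x/\|x\|_X)\|_Y$; the only cosmetic difference is that the paper picks $\delta$ for $\varepsilon/R$ rather than rescaling $\varepsilon$ afterwards.
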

\begin{proof}
By Theorem \ref{vecmes}, $\lambda_\AAA \ll \mu$, hence for every $\varepsilon > 0$ there is $\delta > 0$ such that $\mu(E) < \delta$ implies $\|\AAA\chi_E\|_1 <\frac{\varepsilon}{R}$ i.\,e. $\int_E\|\AAt x\|\, d\mu(t) < \frac{\varepsilon}{R}$ for all vectors $x$ in $B_X$. Therefore, if $\mu(E)<\delta$ we have
$$\int_E \|\AAt x\|_Y \,d\mu(t) \leqslant R \int_E \left\|\AAt\left(\frac{x}{\|x\|_X}\right)\right\|_Y d\mu(t) < \varepsilon,$$
for all $\|x\|_X\leqslant R$, implying proclaimed uniform integrability.
\end{proof}

\begin{proof}[Another proof of Theorem \ref{ACmuPb}.]

Since integration is a linear and bounded operator from $L^1_s(\Omega, \mu, \BXY)$ to $\BXY$ (see \eqref{normEst}), Theorem \ref{vecmes} implies Theorem \ref{ACmuPb} for spaces $X$ that contain no copy of $\ell^1$.
\end{proof}

We conclude this subsection with another application of Theorem \ref{UCA} that provides an alternative to Lemma \ref{sigfin}. 
For a family $\AAA$ in $L^1_s(\Omega, \mu, \mathcalb B (X,Y))$ the set $\{ t \in \Omega : \AAt\neq 0\}$ is not necessarily $\sigma$-finite even for Hilbert spaces, see Example \ref{longline}. Nevertheless, in the case where $X$ does not contain copy of $\ell^1$, it has to be $\sigma$-finite with respect to $L_s^1$ norm, as the following corollary shows.

\begin{corollary}\label{sigfinl1}
Let $\AAA \in L^1_s(\Omega, \mu, \mathcalb B (X,Y))$, where $X$ does not contain a copy of the space $\ell^1$. Then, there are disjoint sets $S,N\in\M$ such that $\WWW = S\sqcup N$, where the set $S$ is $\sigma$-finite and $\|\AAA \chi_N\|_1 = 0$.
\end{corollary}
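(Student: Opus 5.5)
The plan is to exhaust $\Omega$ by sets of finite measure, using the uniform countable additivity from Theorem \ref{UCA} in the equivalent form \eqref{UCAekv}. I define the set function
$$\rho(E)=\|\AAA\chi_E\|_1=\sup_{\|x\|_X=1}\int_E\|\AAA_t x\|_Y\,d\mu(t)=\sup_{\|x\|_X=1}\mu_x(E),\qquad E\in\M.$$
It is monotone and countably subadditive, and $\rho(\Omega)\leqslant\|\AAA\|_1<+\iii$. Let $\mathcal S$ be the collection of $\sigma$-finite sets in $\M$, which is closed under countable unions. I set
$$\alpha=\sup\{\rho(S):S\in\mathcal S\}\leqslant\|\AAA\|_1$$
and choose $S_n\in\mathcal S$ with $\rho(S_n)\to\alpha$. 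Then $S=\bigcup_n S_n\in\mathcal S$ satisfies $\rho(S)=\alpha$ by monotonicity. I put $N=\Omega\setminus S$. It remains to prove $\rho(N)=0$, that is, $\int_N\|\AAA_tx\|_Y\,d\mu=0$ for every unit $x$.

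Suppose instead that $\rho(N)>0$. Then there is a unit $x$ with $\mu_x(N)>0$. Since $\AAA x\in L^1(\Omega,\mu,Y)$, its support $\{t:\AAA_tx\neq0\}$ is $\sigma$-finite, so $F=N\cap\{\AAA x\neq0\}$ is $\sigma$-finite and $\mu_x(F)=\mu_x(N)>0$. The set $S\sqcup F$ then belongs to $\mathcal S$. This alone gives no contradiction, because $\rho$ is only a supremum and need not be additive, so I cannot conclude $\rho(S\sqcup F)>\rho(S)$.

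This is the main obstacle. I would resolve it by not working with the single number $\alpha$, and instead arguing by contradiction with \eqref{UCAdef}. Suppose that for every $S\in\mathcal S$ we have $\rho(\Omega\setminus S)>0$. I would aim for a stronger dichotomy: either there is $S\in\mathcal S$ with $\rho(\Omega\setminus S)=0$, or there exist $\varepsilon>0$ and, for every $S\in\mathcal S$, some $\sigma$-finite $F\subset\Omega\setminus S$ with $\rho(F)\geqslant\varepsilon$. To get this, I let $\beta=\inf_{S\in\mathcal S}\rho(\Omega\setminus S)$. If $\beta>0$, take $\varepsilon=\beta/2$. Given $S$, pick a unit $x$ with $\mu_x(\Omega\setminus S)>\beta/2$ and let $F$ be the support of $\AAA x$ inside $\Omega\setminus S$. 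Then I iterate. Starting from $S_0=\emptyset$, choose a disjoint $\sigma$-finite $F_1$ with $\rho(F_1)\geqslant\varepsilon$, then $F_2\subset\Omega\setminus F_1$, and so on, producing disjoint $F_n\in\mathcal S$ with $\sup_x\mu_x(F_n)\geqslant\varepsilon$. Then
$$\sup_{\|x\|=1}\mu_x\Bigl(\bigsqcup_{k>n}F_k\Bigr)\geqslant\varepsilon\qquad\text{for all }n,$$
which contradicts \eqref{UCAdef} for the uniformly countably additive family $\mathcalb F$ of Theorem \ref{UCA}.

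So $\beta=0$. I then pick $S_n\in\mathcal S$ with $\rho(\Omega\setminus S_n)<1/n$ and set $S=\bigcup_n S_n\in\mathcal S$. By monotonicity $\rho(\Omega\setminus S)\leqslant 1/n$ for every $n$, hence $\rho(\Omega\setminus S)=0$. Taking $N=\Omega\setminus S$ gives $\|\AAA\chi_N\|_1=0$, which completes the argument.
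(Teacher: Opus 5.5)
Your proof is correct, but it takes a different route from the paper.

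The paper follows the argument of \cite[Theorem~1.2.4]{DU}. It uses uniform countable additivity of $\{\mu_x:\|x\|_X=1\}$ to find, for each $\varepsilon=\frac1n$, finitely many unit vectors $x^n_1,\dots,x^n_{k_n}$ such that $\mu_{x^n_i}(E)=0$ for all $i$ forces $\mu_x(E)<\frac1n$ for every unit $x$. It then takes $S$ to be the union of the countably many $\sigma$-finite supports of the functions $\AAA x^n_i$. The complement $N=\Omega\setminus S$ is null for every $\mu_{x^n_i}$, hence for every $\mu_x$.

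You instead run a direct exhaustion. You set $\rho(E)=\|\AAA\chi_E\|_1$ and $\beta=\inf\rho(\Omega\setminus S)$ over $\sigma$-finite $S\in\M$. If $\beta>0$, you peel off disjoint $\sigma$-finite pieces $F_n$, each the support of a suitable $\AAA x$ inside the remaining set, with $\rho(F_n)>\beta/2$. This contradicts \eqref{UCAdef}. So $\beta=0$, and a countable union of $\sigma$-finite sets then gives the required $S$.

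What your route buys: it is more self-contained. It uses Theorem \ref{UCA} only through the elementary consequence $\sup_{\|x\|_X=1}\mu_x(F_n)\to0$ for disjoint $(F_n)$, and it avoids the finite-domination lemma. You also correctly saw why the naive approach of maximizing $\rho(S)$ fails, namely that $\rho$ is a supremum and not additive.

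What the paper's route buys: it yields more structure. It produces a countable family $\{x^n_i\}$ such that every $\mu_x$ is absolutely continuous with respect to the single finite measure $\sum_{n}2^{-n}k_n^{-1}\sum_{i=1}^{k_n}\mu_{x^n_i}$. This is a control-measure statement of Bartle--Dunford--Schwartz type. Your argument produces only the set $S$, which is exactly what the corollary requires.
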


\begin{proof}
Since the measures $\mu_x$ from \eqref{mux}
are uniformly countably additive for $\|x\|_X=1$, the same arguments as in the proof of \cite[Theorem 1.2.4, Page 12]{DU} shows that for every $\varepsilon > 0$ there exists unit vectors $x_1,\ldots,x_n\in X$ such that $\mu_{x_k}(E) = 0$ for every $k\in\{1,\ldots,n\}$ implies $\mu_x(E)<\varepsilon$ for every unit vector $x\in X$.
Therefore, as in the proof of the mentioned theorem, for $\varepsilon = \frac{1}{n}$ there exists $x_1^n,\ldots, x^n_{k_n}\in X$ such that $\mu_{x^n_i}(E) = 0$ for every $i\in\{1,\ldots,k_n\}$ implies $\mu_x(E)<\frac{1}{n}$ for every unit vector $x\in X$. Hence, if $E\in\M$ satisfies $\mu_{x_i^n} (E) = 0$ for every $n\in\mathbb{N}$ and every $i\in\{1,\ldots,k_n\}$ then $\mu_x(E)< \frac{1}{n}$ for every $n\in\mathbb{N}$ and every $x\in B_X$, implying that $\mu_x (E) = 0$. We set
$$S_i^n = \{t\in\WWW:\|\AAt x_i^n\|_Y\neq 0\},\qquad n\in\mathbb{N},\,\, i\in\{1,\ldots,k_n\}.$$
As the scalar function $\Omega\ni t\to\|\AAt x\|_Y\in\R$ is $\mu$-integrable for every vector $x$ in $X$, the set $S_i^n$ is measurable and $\sigma$-finite for every $n\in\mathbb{N}$ and every $i$ in the set $\{1,\ldots,k_n\}$. Hence, the set
$$S = \bigcup_{n=1}^\infty\bigcup_{i=1}^{k_n}
S_i^n$$
is measurable and $\sigma$-finite since the union is countable. Finally, letting $N=\WWW\setminus S$, by the construction of the set $S$ we obtain
$$\int_N \left\|\AAt x_i^n\right\|_Y\, d\mu(t) = \mu_{x^n_i}(N) = 0,\qquad n\in\N,\,\,i\in\{1,\ldots,k_n\}.$$
Therefore, we have the equality $$\mu_{x}(N) = \int_N \|\AAt x\|_Y\,d\mu(t) = 0$$ for every unit vector $x\in X$ implying that $\|\AAA\chi_N\|_1 = 0$.
\end{proof}
If we do not assume that $X$ does not contain $\ell^1$ and also do not assume that $X$ is separable, then the conclusion of the above Corollary does not hold. In fact, take $\Omega = \mathbb R$, $\M = \mathcal P(\mathbb R)$ with counting measure $\mu$ and set $X = \ell^1(\mathbb R)$. 
Define $\AAA = (\AAA_t)_{t \in \mathbb R}$ by $\AAA_t x = x_t e_t$. Then $\AAA \in L^1_s(\mathbb R, \mathcal P(\mathbb R), \mu, X)$ and in fact $\| \AAA \chi_E \|_1 = 1$ for every nonempty $E \subset \mathbb R$. Therefore, there is no countable subset $S \subset \mathbb R$ such that $\| \AAA \chi_{S^c} \|_1 = 0$.

\subsection{Approximation results}
In this subsection we consider approximation problems for elements of $L_s^1(\Omega,\mu,\BXY)$ by families of bounded elements from $L_s^1(\Omega,\mu,\BXY)$, as well as by sequences of simple families from this space. We provide sufficient conditions under which this is possible, and we also present an example showing that approximation in the norm of $L_s^1(\Omega,\mu,\BXY)$ by simple families cannot always be achieved.

Theorem \ref{AproxP=1} below is an approximation result for the space $L_s^1(\Omega,\mu,\BXY)$ by bounded families. 
A similar theorem was recently obtained for functions with values in spaces of measures (see \cite[Theorem 4.1]{ABK2}).

\begin{theorem}\label{AproxP=1} Assume $X$ is separable and contains no isomorphic copy of $\ell^1$. Then, bounded families in $L_s^1(\Omega,\mu,\BXY)$ are dense in $L_s^1(\Omega,\mu,\BXY)$. 
\end{theorem}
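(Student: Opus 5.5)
The natural approach is to truncate $\AAA$ where its operator norm is large. Since $X$ is separable, the remark in Subsection \ref{OV} shows that $t\mapsto\|\AAA_t\|$ is $\mu$-measurable. Hence the sets
$$F_n=\{t\in\WWW:\|\AAA_t\|>n\}, \qquad n\in\N,$$
belong to $\M$ and decrease in $n$. Define the truncations $\AAA^{(n)}=\chi_{\WWW\setminus F_n}\AAA$. Each $\AAA^{(n)}$ is a bounded family, with $\|\AAA^{(n)}_t\|\leqslant n$. It is pointwise $\mu$-measurable, and it is strongly integrable because $\|\AAA^{(n)}_tx\|_Y\leqslant\|\AAA_tx\|_Y$. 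So $\AAA^{(n)}\in L^1_s(\WWW,\mu,\BXY)$. Moreover
$$\left\|\AAA-\AAA^{(n)}\right\|_1=\|\chi_{F_n}\AAA\|_1=\|\lambda_\AAA(F_n)\|_1,$$
in the notation of Theorem \ref{vecmes}. It therefore suffices to show $\|\lambda_\AAA(F_n)\|_1\to0$.

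The key observation is that $F_n\downarrow F_\iii:=\bigcap_n F_n=\{t:\|\AAA_t\|=+\iii\}=\emptyset$, since every $\AAA_t$ is a bounded operator. By Theorem \ref{UCA} the family $\{\mu_x:\|x\|_X=1\}$ is uniformly countably additive. The equivalent form \eqref{UCAekv}, applied to the decreasing sequence $F_n\downarrow\emptyset$, then gives
$$\sup_{\|x\|_X=1}\int_{F_n}\|\AAA_tx\|_Y\,d\mu(t)\to0.$$
This is exactly $\|\lambda_\AAA(F_n)\|_1\to0$. Equivalently, one can invoke the countable additivity of $\lambda_\AAA$ from Theorem \ref{vecmes} on the disjoint decomposition $F_1=\bigsqcup_{k}(F_k\setminus F_{k+1})$.

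The only delicate point is measurability of $F_n$, and this is where separability of $X$ is used. For it I take a countable dense $S\subset B_X$ and write $\|\AAA_t\|=\sup_{x\in S}\|\AAA_tx\|_Y$. This is a countable supremum of $\mu$-measurable functions, since each $\AAA x$ is strongly $\mu$-measurable. One should also make sure this is compatible with the pointwise-equivalence identification. To do so, I fix a representative; truncation of a representative gives a representative of the truncated class, and the norm $\|\cdot\|_1$ does not depend on the choice. Some care is needed that modifying a representative on null sets $S_x$ depending on $x$ does not destroy the conclusion, but the estimate above is computed for the fixed representative, so it suffices. The main analytical input is thus entirely carried by Theorem \ref{UCA}, where the hypothesis that $X$ contains no copy of $\ell^1$ enters.
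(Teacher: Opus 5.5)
Your proposal is correct and is essentially the paper's proof. The paper also truncates $\AAA$ on the sets $\{t:\|\AAA_t\|\leqslant n\}$, which are measurable because $X$ is separable. It then gets $\|\AAA-\chi_{\Omega_n}\AAA\|_1\to0$ from the countable additivity of $\lambda_\AAA$ in Theorem \ref{vecmes}; this is the alternative you mention, and it is equivalent to your direct use of Theorem \ref{UCA} in the form \eqref{UCAekv}.
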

\begin{proof}
We fix $\AAA$ in $L_s^1(\Omega,\mu,\BXY)$. Since $X$ separable, $\Omega \ni t \mapsto \| \AAA_t \| \in \R$ is a $\mu$-measurable function. Hence we have an increasing sequence of measurable sets $\Omega_n =
\{t\in\Omega:\|\AAA_t\|\leqslant n\} $ whose union is $\Omega$.
We define bounded functions $\AAA_n=\chi_{\Omega_n}\cdot\AAA$ in $L_s^1(\Omega,\mu,\BXY)$ for all $n\in\N$. By Theorem \ref{vecmes} we have

\begin{equation*}
  \lim_{n\to\infty}\|\AAA-\AAA_n\|_1 = \lim_{n\to\infty} \| \lambda_\AAA (\Omega) - \lambda_\AAA(\Omega_n)\|_1 = 0. \qedhere  
\end{equation*}
\end{proof}

Every $\AAA$ in $L_s^1(\Omega,\mu,\BXY)$ has the property that for every $x \in X$ the family $\AAA x$ in $Y$ can be approximated by a sequence of simple $Y$-valued functions, which follows directly from Bochner integrability of the family $\AAA x$ (see Definition \ref{OsnovnaDef}). 
Theorem \ref{SchAppr} provides sufficient conditions under which the approximating simple family of operators can be chosen independently of the vector $x$ in $X$. For this we need the following notion.

It is said that a sequence $(X_n)_{n=1}^\infty$ of finite dimensional subspaces of a Banach space $X$ form \textit{F.\,D.\,D.} (\textit{finite dimensional Schauder decomposition}) of $X$ if for every $x$ in $X$ there is a unique sequence $(x_n)_{n=1}^\infty$ such that $x_n \in X_n$ for all $n\in\N$ and $x = \sum_{n=1}^\infty x_n$. 
In that case, we have a sequence of projections $(P_n)_{n=1}^\infty$ which map $x = \sum_{n=1}^\infty x_n$ to $\sum_{k=1}^n x_k$ and one shows that $M = \sup_{n\in\N} \| P_n \| < + \infty$, the range of $P_n$ is the direct sum $X_n'=\bigoplus_{k=1}^nX_k$. 
We have natural inclusions $j_n \colon X_n' \rightarrow X$ defined by $j_n(x)=x$ for $x\in X_n'$, as well as natural projections $\pi_n\colon X\rightarrow X_n'$  defined by $\pi_n(x)=P_n(x)$ for $x\in X$. Then we have $\pi_n\circ j_n=id_{X_n'}$, $j_n\circ\pi_n=P_n$ and $\| \pi_n \| = \| P_n \| \leqslant M$ for every $n\in\N$. Whenever we have an F.\,D.\,D. $(X_n)_{n=1}^\infty$ for $X$ we use the above notation.

The decomposition $(X_n)_{n=1}^\infty$ is called \textit{shrinking} if $\lim_{n\to\infty} \|P_n^* x^* - x^*\|_{X^*} = 0$ for every $x^* \in X^*$. For more details, see \cite[Section~1.g]{LTz}.

\begin{proposition}\label{MerljivAprox}
      Assume $(X_n)_{n=1}^\infty$ is a shrinking F.\,D.\,D. for $X$ and $\AAA\in\mathcal{M}_s(\WWW,\M,\mu,\BXY)$ such that $\AAA_t\in\KXY$ for $\mu$-almost all $t\in\Omega$. Then, $\AAA\in\mathcal{M}(\WWW,\M,\mu,\KXY)$. 
\end{proposition}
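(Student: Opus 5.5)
The plan is to approximate $\AAA$ in operator norm by the families $\AAA P_n = (\AAA_t P_n)_{t\in\Omega}$ and show each of these is strongly $\mu$-measurable as a $\BXY$-valued (indeed $\KXY$-valued) function. Strong measurability is preserved under pointwise a.\,e. limits, and $\KXY$ is a closed subspace of $\BXY$. So the statement follows once we know that $\AAA P_n$ is strongly $\mu$-measurable for each $n$ and that $\|\AAA_t P_n - \AAA_t\|\to 0$ for $\mu$-almost every $t$.

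\textbf{Step 1 (measurability of the truncations).} Write $\AAA_t P_n = (\AAA_t j_n)\circ \pi_n$. The family $(\AAA_t j_n)_{t\in\Omega}$ lies in $\mathcal M_s(\Omega,\M,\mu,\mathcalb B(X_n',Y))$, since $\AAA_t j_n x = \AAA_t x$ is strongly measurable for each $x\in X_n'$. As $X_n'$ is finite dimensional, Proposition \ref{BochnerFinite} gives that $t\mapsto \AAA_t j_n$ is strongly $\mu$-measurable in $\mathcalb B(X_n',Y)$. Now $X_n'$ is complemented in $X$ via $\pi_n$, so Lemma \ref{Complemented} (with $M=X_n'$ and $\pi=\pi_n$) shows that $t\mapsto \AAA_t j_n\pi_n = \AAA_t P_n$ is strongly $\mu$-measurable in $\BXY$. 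Each $\AAA_t P_n$ has finite rank, so these families in fact take values in $\KXY$.

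\textbf{Step 2 (norm convergence on compact operators).} Fix $t$ outside the null set where $\AAA_t$ fails to be compact. We want $\|\AAA_t - \AAA_t P_n\|\to 0$, and we pass to adjoints: $\|\AAA_t(I-P_n)\| = \|(I-P_n^*)\AAA_t^*\|$. By Schauder's theorem $\AAA_t^*\in \mathcalb K(Y^*,X^*)$, so $K=\AAA_t^*(B_{Y^*})$ is relatively compact in $X^*$. The decomposition is shrinking, so $P_n^* x^*\to x^*$ pointwise on $X^*$, and $\sup_n\|I-P_n^*\|\leqslant 1+M$. A uniformly bounded sequence of operators converging pointwise converges uniformly on relatively compact sets; we get this by covering $K$ with a finite $\varepsilon$-net and using an equicontinuity estimate. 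Hence $\sup_{x^*\in K}\|(I-P_n^*)x^*\|\to0$, which is exactly $\|\AAA_t-\AAA_tP_n\|\to0$.

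\textbf{Step 3 (conclusion).} The functions $\AAA P_n$ are strongly $\mu$-measurable, so each is an a.\,e. limit of $\mu$-simple functions, and $\AAA P_n\to\AAA$ in norm a.\,e. By the standard stability of strong measurability under a.\,e. limits (or by Theorem \ref{PMT}: the a.\,e. range of $\AAA$ lies in the closure of a countable union of separable sets, and weak measurability passes to pointwise limits), $\AAA$ is strongly $\mu$-measurable. Since it takes values in the closed subspace $\KXY$ a.\,e., we may regard it as an element of $\mathcal M(\Omega,\M,\mu,\KXY)$.

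The main obstacle is Step 2. This is where the shrinking hypothesis enters: pointwise convergence $P_n\to I$ on $X$ only gives $\|(I-P_n)\AAA_t^{**}\|$-type control on the wrong side, so the argument must be run for the adjoint $\AAA_t^*$, whose compactness comes from Schauder's theorem.
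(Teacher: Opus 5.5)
Your proposal is correct and follows the paper's proof essentially step by step. The paper also writes $\AAA_tP_n=\AAA_tP_nj_n\pi_n$, obtains strong measurability of these truncations from Proposition \ref{BochnerFinite} and Lemma \ref{Complemented}, uses the shrinking property through $\|\AAA_tP_n-\AAA_t\|=\|(P_n^*-I_{X^*})\AAA_t^*\|\to0$, and concludes from the stability of strong measurability under a.\,e.\ limits; your Step 2 just writes out the uniform-convergence-on-compact-sets argument that the paper leaves to a reference.
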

\begin{proof}
    Set $\Btn=\AAA_tP_nj_n\in\mathcalb{B}(X_n',Y)$ for all $t\in\WWW$ and $n\in\N$. Then, $\BBB^{(n)}\in\mathcal{M}_s(\WWW,\M,\mu,\mathcalb{B}(X_n',Y))$ and thus $\BBB^{(n)}\in\mathcal{M}(\WWW,\M,\mu,\mathcalb{B}(X_n',Y))$, due to Proposition \ref{BochnerFinite}, as $\dim X_n'<+\iii$. It follows that $\BBB^{(n)}\pi_n\in\mathcal{M}(\WWW,\M,\mu,\mathcalb{B}(X,Y))$, due to Lemma \ref{Complemented}. For every $t\in\WWW$ we have
    $$\Btn\pi_n=\AAA_tP_nj_n\pi_n=\AAA_tP_n^2=\AAA_tP_n\rightarrow\AAA_tI_X=\AAA_t,$$
    in strong operator topology. Next, as $\AAA_t\in\KXY$ (and thus $\AAA^*_t\in\mathcalb{K}(Y^*,X^*)$) for $\mu$-a.\,e. $t\in\WWW$ and $P_n^*\rightarrow I_{X^*}$ strongly (as $(X_n)_{n=1}^\iii$ is shrinking), it follows that $$\left\|\Btn\pi_n-\AAA_t\right\|=\|\AAA_tP_n-\AAA_t\|=\|(P^*_n-I_{X^*})\AAA_t^*\|\rightarrow0,$$
    for $\mu$-a.\,e. $t\in\WWW$ (see arguments\footnote{We warn the reader that there are misplaced parenthesis.} before Theorem 3.14 in \cite{MAMK}). Thus, $\AAA$ is $\mu$-a.\,e. limit of strongly $\mu$-measuarable functions. Final conclusion follows from \cite[Corollary~1.1.23]{ABS}.
\end{proof}

\begin{theorem}\label{SchAppr}
    Let $(X_n)_{n=1}^\infty$ be an F.\,D.\,D. for $X$ and assume also that $X$ does not contain an isomorphic copy of $\ell^1$. Then for every  
   $\AAA\in L^1_s(\Omega, \mu, \mathcalb B(X, Y))$
there is a sequence $\left(\mathcal S^{(n)}\right)_{n=1}^\infty$ of $\mu$-simple functions in $L^1_s(\Omega, \mu, \mathcalb B(X, Y))$ such that
   \begin{equation}\label{appsimple}
\displaystyle\lim_{n\to\infty} \int_\Omega \left\| \AAA_tx - \mathcal{S}_t^{(n)} x \right\|_Y d\mu(t) = 0\,\,\,\text{for all}\,\,\, x\in X.
\end{equation}
If, in addition, F.\,D.\,D. $(X_n)_{n=1}^\iii$ is shrinking, and $\AAA_t \in \KXY$ for every $t \in \Omega$,  then $\AAA$ is in the $L^1_s$-closure of the set of $\mu$-simple $\KXY$-valued functions, i.\,e. the limit in \eqref{appsimple} is uniform over $x \in B_X$. 
\end{theorem}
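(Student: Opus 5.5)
We first truncate $\AAA$ to the finite-dimensional pieces of the decomposition, and then approximate each truncated family by simple functions. For $n\in\N$ set $\BBB^{(n)}_t=\AAA_tP_nj_n\in\mathcalb B(X_n',Y)$. Since $\dim X_n'<+\infty$, Proposition \ref{BochnerFinite} gives $\BBB^{(n)}\in L^1(\Omega,\mu,\mathcalb B(X_n',Y))$. Indeed, $\int_\Omega\|\AAA_tP_nx\|_Y\,d\mu(t)\leqslant M\|\AAA\|_1\|x\|_X$, so $\|\BBB^{(n)}\|_1\leqslant M\|\AAA\|_1$ and $\BBB^{(n)}\in L^1_s$.

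Bochner integrable functions can be approximated in $L^1$ norm by $\mu$-simple functions. So for each $n$ we choose a $\mu$-simple $\mathcalb B(X_n',Y)$-valued function $\sigma^{(n)}$ with
$$\int_\Omega\|\BBB^{(n)}_t-\sigma^{(n)}_t\|\,d\mu(t)<\frac1n,$$
and we set $\mathcal S^{(n)}_t=\sigma^{(n)}_t\pi_n$. This is a $\mu$-simple $\BXY$-valued function, and if $\AAA_t\in\KXY$ then all its values have finite rank. Since $\BBB^{(n)}_t\pi_n=\AAA_tP_n$ and $\|\pi_n\|\leqslant M$, we get
$$\sup_{x\in B_X}\int_\Omega\|\AAA_tP_nx-\mathcal S^{(n)}_tx\|_Y\,d\mu(t)\leqslant\frac Mn.$$
Consequently both parts of the theorem reduce to controlling $\int_\Omega\|\AAA_t(x-P_nx)\|_Y\,d\mu(t)$. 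For a fixed $x$ this quantity tends to $0$: indeed $P_nx\to x$ in norm, and by Theorem \ref{CGT} the map $x\mapsto\AAA x$ is bounded from $X$ into $L^1(\Omega,\mu,Y)$. This proves \eqref{appsimple}. The hypothesis that $X$ contains no $\ell^1$ is not really needed for this part.

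The main obstacle is the uniform statement, where we must show that
$$\sup_{x\in B_X}\int_\Omega\|\AAA_t(I-P_n)x\|_Y\,d\mu(t)\to0.$$
The plan has three steps.

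\emph{Step 1: restrict to a set of finite measure.} By Corollary \ref{sigfinl1} the family $\AAA$ lives, in $\|\cdot\|_1$, on a $\sigma$-finite set $S$. Writing $S=\bigcup_k S_k$ with $S_k$ increasing and of finite measure, Theorem \ref{vecmes} gives $\|\AAA\chi_{\Omega\setminus S_k}\|_1\to0$. This tail costs at most $(1+M)\|\AAA\chi_{\Omega\setminus S_k}\|_1$, so we may assume $\mu(\Omega)<+\infty$.

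\emph{Step 2: remove the set where $\|\AAA_t\|$ is large.} Using separability of $X$ (the space has an F.\,D.\,D.), the function $t\mapsto\|\AAA_t\|$ is measurable. Corollary \ref{UINT}, or Theorem \ref{vecmes} applied to the sets $\{\|\AAA_t\|>R\}$ (which decrease to $\emptyset$), lets us remove $\{\|\AAA_t\|>R\}$ at small cost in $\|\cdot\|_1$.

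\emph{Step 3: the bounded part on finite measure.} On the remaining set we have $\|\AAA_t(I-P_n)\|\leqslant R(1+M)$. Moreover $\|\AAA_t(I-P_n)\|=\|(I-P_n^*)\AAA_t^*\|\to0$ for every $t$, by compactness of $\AAA_t^*$ together with the shrinking property, exactly as in the proof of Proposition \ref{MerljivAprox}. The functions $t\mapsto\|\AAA_t(I-P_n)\|$ are measurable. Hence dominated convergence on a set of finite measure gives
$$\int\|\AAA_t(I-P_n)\|\,d\mu(t)\to0,$$
and this dominates $\sup_{x\in B_X}\int\|\AAA_t(I-P_n)x\|_Y\,d\mu(t)$. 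Combining this with the $M/n$ estimate above yields the uniform limit.
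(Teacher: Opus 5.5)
Your proof is correct, and it differs from the paper's in both parts.

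\textbf{First assertion.} The paper proves $\int_\WWW\|\AAA_t(I_X-P_n)x\|_Y\,d\mu(t)\to0$ by a fairly heavy argument. It passes to a $\sigma$-finite measure via Lemma \ref{sigfin}, cuts off a tail using Theorem \ref{vecmes}, and applies Vitali's convergence theorem on a set of finite measure, with uniform integrability supplied by Corollary \ref{UINT}. Your bound
$$\int_\WWW\|\AAA_t(I_X-P_n)x\|_Y\,d\mu(t)\leqslant\|\AAA\|_1\,\|(I_X-P_n)x\|_X,$$
with $\|\AAA\|_1<+\infty$ by Theorem \ref{CGT}, makes all of this unnecessary. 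It also shows, as you say, that the first assertion holds for every $X$ with an F.\,D.\,D. In particular, the extra hypothesis $\int_\WWW\|\AAA_t\|\,d\mu(t)<+\infty$ in the Remark following the theorem is not needed either.

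\textbf{Uniform assertion.} Both arguments first pass to a set of finite measure through Theorem \ref{vecmes}, and then use $\psi_n(t)=\|\AAA_t(I_X-P_n)\|\to0$ pointwise. The paper applies Egorov's theorem to get $\psi_n\rightrightarrows0$ off a set $F_\delta$ of small measure, and controls $F_\delta$ by $\mu$-continuity of $\lambda_\AAA$. You instead remove $\{\|\AAA_t\|>R\}$, which is the device of Theorem \ref{AproxP=1}, and apply dominated convergence with the constant majorant $R(1+M)$. Your version trades Egorov for measurability of $t\mapsto\|\AAA_t\|$, which is available since $X$ is separable.

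Both routes rest on Theorem \ref{vecmes}, hence on $X$ containing no copy of $\ell^1$. In the shrinking case this is automatic, because $X^*$ is then separable. Combined with your first part, the $\ell^1$ hypothesis can therefore be dropped from the theorem altogether.

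A small point: the values $\sigma^{(n)}_t\pi_n$ of $\mathcal S^{(n)}$ have finite rank whether or not $\AAA_t$ is compact, since $\pi_n$ has finite-dimensional range.
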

\begin{proof}
Set $\BBB_t=\AAA_tP_nj_n\in\mathcalb{B}(X_n',Y)$ for all $t\in\WWW$ and $n\in\N$. 
Next, we have $$\left\|\Btn x'\right\|_Y = \|\AAA_t P_nj_nx'\|_Y = \|\AAA_tx'\|_Y$$ for every $x'\in X_n'$, implying $\BBB \in L^1_s(\WWW,\mu,\mathcalb{B}(X_n',Y))$ for all $n\in\N$. Due to Proposition \ref{BochnerFinite}, there exist  $\mu$-simple $\mathcalb{B}(X_n',Y)$-valued functions $\CCC^{(n)}$ such that 
$$\int_\WWW\left\|\Btn-\CCC^{(n)}_t\right\|d\mu(t)<\frac{1}{n}, \qquad n \in \N.$$
Finally, let $\Stn=\CCC^{(n)}_t\pi_n\in\BXY$ for all $t\in\WWW$. Then, for all $n\in\N$ we get
\begin{equation}\label{approx}
\begin{split}
\int_\WWW&\left\|\AAA_tx\!-\!\Stn \!x\right\|_Yd\mu(t)\!=\!\!\!\int_\WWW\left\|\!\left(\!\AAA_tx\!-\!\Btn\pi_nx\!\right)\!\!+\!\!\left(\!\Btn\pi_nx\!-\!\CCC^{(n)}_t\pi_nx\!\right)\!\right\|_Y d\mu(t)\\
&\leqslant\int_\WWW\left\|\AAA_tx-\Btn\pi_nx\right\|_Y d\mu(t) + \int_\WWW\left\|\Btn\pi_nx-\CCC^{(n)}_t\pi_nx\right\|_Y\,d\mu(t)\\
&\leqslant\int_\WWW\|\AAA_tx-\AAA_tP_nj_n\pi_nx\|_Y\,d\mu(t)+\int_\WWW\left\|\Btn-\CCC^{(n)}_t\right\|\|\pi_n\|\|x\|_X\,d\mu(t)\\
&\leqslant\int_\WWW\left\|\AAA_tx-\AAA_tP_n^2x\right\|_Y \,d\mu(t) + M \cdot \|x\|_X \cdot \int_\WWW\left\|\Btn-\CCC^{(n)}_t\right\|d\mu(t)\\
&<\int_\WWW\|\AAA_tx-\AAA_tP_nx\|_Y\,d\mu(t)+\frac{M\cdot\|x\|_X}{n}, \qquad x \in X.
\end{split}
\end{equation} 
For the proof of the first part of the theorem, it suffices to show that 
\begin{equation}\label{fpthm}
\lim_{n\to\infty}\int_\WWW\|\AAA_t(I_X-P_n)x\|_Y\,d\mu(t) = 0, \qquad \forall x \in X.
\end{equation}
Let us fix $\varepsilon > 0$. By Theorem \ref{vecmes}, there is $\delta > 0$ such that $\| \chi_E \AAA \|_1 < \varepsilon$ whenever $\mu(E) < \delta$, also implying that (as $\|P_n x\|_X\leqslant M\|x\|_X$)
\begin{equation}\label{chiEAAA}
\begin{split}
\|\chi_E \AAA P_n \|_1 & = \sup_{\|x\|_X\leqslant1} \int_E \|\AAA_t P_nx\|_Y\,d\mu(t) \leqslant \sup_{\|x\|_X\leqslant M} \int_E \|\AAA_t x\|\,d\mu(t)\\
&  = M\cdot \|\chi_E\AAA\|_1 < M\cdot\varepsilon, \qquad \mu(E) < \delta.   
\end{split}
\end{equation}
Since $P_n \rightarrow I_X$ strongly, the sequence of scalar functions $\varphi_n(t)=\|\AAA_t(I_X-P_n)x\|_Y$ converges to $0$ pointwise. 
As $\|(I_X-P_n)x\|_X\leqslant(M+1)\cdot\|x\|_X$ for all $n\in\N$, we have
\begin{equation*}
\{\varphi_n:n\in\N\}\subseteq\{\|\AAA y\|_Y:\|y\|_X\leqslant(M+1)\cdot\|x\|_X\},
\end{equation*}
and Corollary \ref{UINT} implies uniform integrability of $\{\varphi_n:n\in\N\}$. As $X$ has an F.\,D.\,D, it is separable, hence by Lemma \ref{sigfin} we can assume that $\mu$ is $\sigma$-finite.  Thus, there are $\WWW_k\in\M$ such that $\mu(\WWW_k)<+\iii$ and $\WWW = \bigsqcup_k \WWW_k$. 
By Theorem \ref{vecmes}, there exists $K \in \mathbb{N}$ such that $\|\chi_{\bigsqcup_{k>K}\WWW_k}\AAA\|_1 < \varepsilon$ and by \eqref{chiEAAA}, we also have $\|\chi_{\bigsqcup_{k>K}\Omega_k}\AAA P_n\|_1 < M\cdot\varepsilon$ for every $n\in\mathbb{N}$. Combining previous estimates we get
\begin{multline*}
\int_\WWW \| \AAA_t x - \AAA_tP_nx \|_Y\,d\mu(t) = \int_{\bigsqcup\limits_{k\leqslant K} \!\Omega_k} \|\AAA x -\AAA P_nx\|_Y\,d\mu + \\
+ \int_{\bigsqcup\limits_{k>K}\!\Omega_k} \|\AAA x - \AAA P_nx\|_Y\,d\mu
\leqslant \int_{\bigsqcup\limits_{k\leqslant K}\! \Omega_k} \|\AAA x-\AAA P_nx\|_Y\,d\mu + \varepsilon \|x\|_X + M \varepsilon \|x\|_X,
\end{multline*}
for every $x \in X$. As $\bigsqcup_{k\leqslant K} \Omega_k$ is a set of finite measure, uniform integrability of $\{\varphi_n :n\in\N\}$ allows us to apply Vitali Convergence Theorem, yielding
$$\lim_{n\to\infty}\int_{\bigsqcup_{k\leqslant K} \Omega_k}\|\AAA_tx-\AAA_tP_nx\|_Y\,d\mu(t)= 0$$
and finally by taking $\varlimsup_{n\to\iii}$ in the above estimates we get
$$\varlimsup_{n\to\iii} \int_\WWW\|\AAA_tx-\AAA_tP_nx\|_Y\,d\mu(t) \leqslant (M+1)\varepsilon\|x\|_X.$$
As $\varepsilon>0$ was arbitrary, we obtain \eqref{fpthm}. 

Next we discuss the case of compact operators. Since $(X_n)_{n=1}^\infty$ is shrinking, $X^*$ is separable and therefore $X$ does not contain a copy of $\ell^1$. By taking the supremum over $x \in B_X$ in \eqref{approx} we obtain
\begin{equation}\label{AtStn}
\begin{split}
\left\| \AAA - \mathcal S^{(n)} \right\|_{1}& \leqslant \sup_{\|x\|_X\leqslant1} \int_\Omega \|\AAA_tx-\AAA_tP_nx\|_Y \,d\mu(t) + \frac{M}{n}.
\end{split}
\end{equation}
We choose $\varepsilon > 0$. By Theorem \ref{vecmes}, there is a $\delta > 0$ such that $\| \chi_E \AAA\|_1 < \varepsilon$ whenever $\mu(E) < \delta$. As in the noncompact case, there is a set $F\in\M$ of finite measure such that $\| \AAA \chi_{\WWW\setminus F} \|_1 < \varepsilon$. Moreover, by \eqref{chiEAAA}, the estimate $\|\chi_{\WWW\setminus F} \AAA P_n\|_1 < M\cdot\varepsilon$, holds for every $n\in\mathbb{N}$. If we apply previous estimates to \eqref{AtStn} we get 
\begin{align}\label{AtStn1}
&\!\!\!\left\| \AAA - \mathcal S^{(n)} \right\|_1 \leqslant \sup_{\|x\|_X\leqslant1} \int_F \| \AAA_t x - \AAA_t P_n x \|_Y \,d\mu(t) \notag\\
&+ \sup_{\| x \|_X \leqslant 1} \int_{\WWW\setminus F}\| \AAA_t x- \AAA_t P_n x\|_Y \,d\mu(t)\!+\!\frac{M}{n}\notag\\
&\leqslant \sup_{\|x\|_X\leqslant1}\int_F \| \AAA_t x - \AAA_t P_nx \|_Y \,d\mu(t)  + \|\AAA P_n\chi_{\WWW\setminus F}\|_1+ \|\AAA\chi_{\WWW\setminus F}\|_1 + \frac{M}{n}\notag\\
&\leqslant \sup_{\|x\|_X\leqslant1}\int_F \| \AAA_t x - \AAA_t P_nx \|_Y \,d\mu(t) + (M+1)\cdot\varepsilon + \frac{M}{n}.
\end{align}
Next, for $n\in\N$ define $\mu$-measurable functions $\psi_n\colon\WWW\rightarrow[0,+\iii)$ by $$\psi_n(t)=\| \AAA_t(I_X -  P_n)\|,\qquad t\in\Omega.$$These function are $\mu$-measurable since $X$ is separable.
As F.\,D.\,D. $(X_n)_{n=1}^\iii$ is shrinking, we have that $P_n^*\rightarrow I_{X^*}$ strongly. Thus, we have $$\lim_{n\to\infty}\psi_n(t)=\lim_{n\to\infty}\|(I_{X^*}-P_n^*)\AAA_t^*\|=0,$$because $\AAA_t\in\mathcalb{K}(Y^*,X^*)$ for all $t\in\WWW$.
Thus, using Egorov's theorem on a set $F$, there is a set $F_\delta \in \M$ such that $\mu(F_\delta) < \delta$ and $\psi_n \rightrightarrows 0$ on $F \setminus F_\delta$. Finally, if we apply previous estimates to \eqref{AtStn1}, for all $n\in\N$, we get
\begin{equation*}
\begin{split}
&\left\| \AAA - \mathcal S^{(n)} \right\|_1 \leqslant \sup_{\|x\|_X\leqslant1}\int_{F\setminus F_\delta} \| \AAA_t x - \AAA_t P_nx \|_Y \,d\mu(t)\! \\  + &\sup_{\|x\|_X\leqslant1}\int_{F_\delta} \| \AAA_t x - \AAA_t P_nx \|_Y \,d\mu(t)+ (M+1)\cdot\varepsilon+\!\frac{M}{n}\\
\leqslant &\sup_{\|x\|_X\leqslant1}\int_{F\setminus F_\delta}\!\!\| \AAA_t x - \AAA_t P_nx \|_Y \,\!d\mu(t)\!+\! \|\AAA\chi_{F_\delta}\|_1 \!+ \!\|\AAA P_n\chi_{F_\delta}\|_1+ (M+1)\cdot\varepsilon + \frac{M}{n}\\
\leqslant &\,\mu(F) \cdot \sup_{t\in F\setminus F_\delta}\psi_n(t) + 2(M+1)\cdot\varepsilon+\frac{M}{n},
\end{split}
\end{equation*}
where the last inequality follows from the \eqref{chiEAAA} as $\mu(F_\delta) < \delta$. Next, similarly to noncompact case, we get
\begin{equation*}
    \begin{split}
       \varlimsup_{n \to \infty} \left\| \AAA - \mathcal S^{(n)} \right\|_1 
       \leqslant2(1+M)\cdot\varepsilon, 
    \end{split}
\end{equation*}
completing the proof.
\end{proof}
\begin{remark}
    It is still possible to obtain a result if we drop assumption that $X$ does not contain a copy of $\ell^1$, but in that case we have to impose a fairly strong condition: $\int_\Omega \| \AAA_t \| \, d\mu(t) < +\infty$. First, note that $\| \AAA_t \|$ is a measurable function due to separability of $X$. 
    Then, we can use Dominated Convergence Theorem in \eqref{fpthm} with integrable majorant $g(t) = (1+M)\cdot\| \AAA_t \|$.

    In the case of compact operators, condition $\int_\Omega \| \AAA_t \| \, d\mu(t) < +\infty$ implies, in view of Proposition \ref{MerljivAprox}, that $\AAA$ is in $L^1(\Omega, \M, \mu, \KXY)$.
\end{remark}

Note that Theorem \ref{SchAppr} does not give approximation in $L^1_s$ norm, in general the approximation is not uniform over $x \in B_X$. Example \ref{Rademacher} shows that one can not get approximation in $L^1_s$-norm by simple functions even if we restrict to Hilbert spaces (which certainly do not contain $\ell^1$) and additionally assume that $\int_\Omega \| \AAA_t \| \, d\mu(t) < +\infty$ and $\mu(\Omega) < + \infty$. Of course, the operators appearing in Example \ref{Rademacher} are not compact.

\begin{example}\label{Rademacher}
We will define $\AAA \in L^1_s([0,1],\mathcalb{B}(\ell^2))$ which is not in the $L^1_s$-norm closure of the set $S$ of simple functions. 
Let $r_n : [0,1]\to \{-1,1\}$ be the $n$-th Rademacher function, each $r_n$ is $m$-measurable, takes alternatively values $\pm 1$ on dyadic intervals of length $2^{-n}$.
Furhter, let $(e_n)_{n=1}^\infty$ be the standard orthonormal basis of $\ell^2$.
Define, for every $0 \leqslant t \leqslant 1$, a self-adjoint operator $\AAA_t\in \mathcalb{B}(\ell^2)$ by
$$\AAA_t e_n = r_n(t)e_n, \qquad n\in\N. $$
Equivalently, for $x = (x_n)_{n=1}^\infty \in \ell^2$, we have $\AAA_t x = (r_n(t)x_n)_{n=1}^\infty$.
Thus $\AAA_t$ is a diagonal operator with diagonal entries $r_n(t)\in\{-1,1\}$, hence it is unitary and self-adjoint for all $0 \leqslant t \leqslant 1$.
It is easily seen that $\AAA = (\AAA_t)_{0 \leqslant t \leqslant 1}$ is pointwise $m$-measurable. Since $\int_0^1 \| \AAA_t x \|_{\ell^2} \, dm(t) = \| x \|_{\ell^2}$ it is immediate that $\AAA$ belongs to $L^1_s([0,1],\mathcalb{B}(\ell^2))$ and in fact $\| \AAA \|_1 = 1$. We prove that
\begin{equation}\label{dist}
    \textup{dist}(\AAA, S) = 1.
\end{equation}
Since $\| \AAA \|_1 = 1$ clearly $\textup{dist}(\AAA, S) \leqslant 1$. Let $\BBB_t=\sum_{k=1}^m \chi_{A_k}(t)\,T_k$
be an arbitrary $m$-simple o.\,v. function. This means the sets $A_1,\dots,A_m$ are Lebesgue measurable of positive measure, pairwise disjoint, satisfy
$[0,1]=\bigsqcup_{k=1}^m A_k$ and $T_1, \ldots, T_m$ are operators in $\mathcalb B(\ell^2)$. Let us fix $\varepsilon > 0$.
For $k\in\{1,\dots,m\}$ and $n\in  \N$ we have
$$\|(\AAA_t - \BBB_t )e_n\|_{\ell^2} = \|r_n(t)e_n - T_ke_n\|_{\ell^2}, \qquad t \in A_k.$$
For every $n \in \N$ we can split \(A_k\) into two measurable parts
$$ A_{k,n}^+ := A_k\cap \{t\in[0,1]: r_n(t)=1\},
\,\,\,
A_{k,n}^- := A_k\cap \{t\in[0,1]: r_n(t)=-1\}. $$

Since for all $n,k\in\N$ we have the equalities
$$\int_{A_k} r_n(t)\,dm(t) = m(A_{k,n}^+) - m(A_{k,n}^-), \,\,\, m(A_{k,n}^+) + m(A_{k,n}^-)=m(A_k),$$
and $\lim\limits_{n \to \infty} \int_{A_k} r_n(t)\,dm(t) = 0$ for every $k\in\{1,\ldots,m\}$, it follows that
$$
\lim_{n\to\infty}m(A_{k,n}^+) = \frac{m(A_k)}{2}, \qquad  \lim_{n\to\infty}m(A_{k,n}^-) = \frac{m(A_k)}{2}, \qquad k\in\{1,\ldots,m\}.
$$
Hence, there is $N \in \N$ such that
$$ m(A_{k,N}^+), m(A_{k,N}^-)>\frac{m(A_k)}{2}-\frac{\varepsilon}{2m},
\qquad k\in\{1,\ldots,m\},
$$
For such an \(N\), we estimate:
\begin{multline*}
\int_{A_k}\|(\AAA_t-\BBB_t)e_N\|_{\ell^2}\,dm(t) = \|e_N-T_ke_N\|_{\ell^2}\cdot m(A_{k,N}^+) + \|-e_N-T_ke_N\|_{\ell^2} \cdot m(A_{k,N}^-) \\
> \bigl( \|e_N - T_ke_N\|_{\ell^2} + \|-e_N-T_ke_N\|_{\ell^2} \bigr) \cdot \left( \frac{m(A_k)}{2}-\frac{\varepsilon}{2m}\right)\geqslant m(A_k)-\frac{\varepsilon}{m},
\end{multline*}
where we used the fact that $$\|e_n-T_ke_n\|_{\ell^2}+\|-e_n-T_ke_n\|_{\ell^2}\geqslant \|e_n-(-e_n)\|_{\ell^2} = 2.$$
Summing over \(k=1,\dots,m\), we obtain
\begin{align*}
\int_{[0,1]} \|(\AAA_t-\BBB_t)e_N\|_{\ell^2}\,dm(t)
& = \sum_{k=1}^m \int_{A_k}\|(\AAA_t-\BBB_t)e_N\|_{\ell^2}\,dm(t) \\
& \geqslant \sum_{k=1}^m \left(m(A_k)-\frac{\varepsilon}{m}\right) = \sum_{k=1}^m m(A_k) - \varepsilon =
1-\varepsilon.
\end{align*}
Since $\|e_N\|_{\ell^2} = 1$, it follows that
$$ \|\AAA-\BBB\|_1
\geqslant \int_{[0,1]} \|(\AAA_t-\BBB_t)e_N\|_{\ell^2}\,dm(t) \geqslant 1 - \varepsilon.$$
Since \(\varepsilon>0\) was arbitrary, we conclude that
$\|\AAA-\BBB\|_1\geqslant 1$.
\hfill{$\triangle$}
\end{example}
We point out another property of the family $\AAA$ from the above example. 
Namely, the set $\{ \int_E \AAA \, d\mu : E \subset [0,1] \; \mbox{is measurable}\}$ is not relatively compact subset of $\mathcalb B(\ell^2)$. To see this, consider $T_n = \int_{E_n} \AAA \, dm$ where 
$$E_n = \bigsqcup_{j=0}^{2^{n-1}-1} \left[ \frac{2j}{2^n}, \frac{2j+1}{2^n} \right], \qquad n \in \N.$$
Then one checks, using definition of Rademacher's functions, that $T_n = 2^{-1} e_n \otimes e_n^*$ and therefore $\| T_n - T_m \| = 1/2$ whenever $n \not= m$ and this suffices.

In view of Proposition \ref{ClSub}, this shows that $\AAA$ can not be approximated by simple functions, however without delivering sharp estimate \eqref{dist}.

\begin{remark}
The point in the above example is that the operators \(\AAA_t\) oscillate independently in infinitely many coordinates, which is provided by Rademacher functions. A simple function involves only finitely many operator values, so it cannot capture this infinite oscillatory behavior in the \(L^1_s\)-norm.
\end{remark}

\subsection{Compactness results}

Compactness properties of Gelfand integrals were studied for the first time in \cite{MMS}, where operators considered are assumed to belong to certain ideals of compact operators, see also \cite{Matija}. Banach space case was considered in a recent paper \cite{MAMK}, see also \cite{MS} where integration of functions whose values are $(q,p)$-summing operators was investigated. Here we deal with Banach space case and use tensor products and duality as tools.

\begin{theorem}\label{Cpt}
Let $X$ be a Banach space that does not contain a copy of $\ell^1$ and let $\AAA\in L^1_s(\WWW,\mu,\mathcalb{B}(X,Y))$. Assume $\AAA_t\in\KXY$ for a.\,e. $t\in\WWW$. Then, $\int_E\AAA\, d\mu\in\KXY$ for every $E\in\M$.
\end{theorem}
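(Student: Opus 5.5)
The plan is to use Proposition \ref{CCjeK}: since $X$ contains no copy of $\ell^1$, it suffices to show that $T=\int_E\AAA\,d\mu$ is completely continuous. That is, for every weakly null sequence $(x_n)_{n=1}^\infty$ in $X$ we must show $\|Tx_n\|_Y\to0$. Replacing $\AAA$ by $\chi_E\AAA$ (still in $L^1_s$, still compact-valued a.e.), we may take $E=\Omega$. Fix such a weakly null sequence; by the uniform boundedness principle it is bounded, say $\|x_n\|_X\leqslant R$.

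By \eqref{Ulazakx},
$$\|Tx_n\|_Y=\left\|\prescript{\mathrlap{B}}{}{\int_{\Omega}}\AAA_tx_n\,d\mu(t)\right\|_Y\leqslant\int_\Omega\|\AAA_tx_n\|_Y\,d\mu(t).$$
Pointwise, for a.e. $t$ the operator $\AAA_t$ is compact, hence completely continuous, so $\varphi_n(t):=\|\AAA_tx_n\|_Y\to0$ for a.e. $t$. It remains to pass the limit under the integral, which is a Vitali-type argument exactly as in the proof of Theorem \ref{SchAppr}. The family $\{\varphi_n\}$ is contained in $\{\|\AAA y\|_Y:\|y\|_X\leqslant R\}$, which is uniformly integrable by Corollary \ref{UINT}.

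The main obstacle is that $\mu$ need not be finite or even $\sigma$-finite, and $X$ is not assumed separable, so Lemma \ref{sigfin} is unavailable. I would handle this with Corollary \ref{sigfinl1}: write $\Omega=S\sqcup N$ with $S$ $\sigma$-finite and $\|\AAA\chi_N\|_1=0$. Then $\int_N\varphi_n\,d\mu=0$ for all $n$, so only $S=\bigsqcup_k\Omega_k$ with $\mu(\Omega_k)<+\infty$ matters. Given $\varepsilon>0$, Theorem \ref{vecmes} (countable additivity of $\lambda_\AAA$ in $L^1_s$-norm) gives $K$ such that $\|\chi_{\bigsqcup_{k>K}\Omega_k}\AAA\|_1<\varepsilon$, hence $\int_{\bigsqcup_{k>K}\Omega_k}\varphi_n\,d\mu\leqslant R\varepsilon$ for all $n$. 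On the finite-measure set $F=\bigsqcup_{k\leqslant K}\Omega_k$, the Vitali Convergence Theorem, using the uniform integrability and the a.e. convergence $\varphi_n\to0$, gives $\int_F\varphi_n\,d\mu\to0$. (Alternatively one can use Egorov on $F$ plus $\mu$-continuity of $\lambda_\AAA$.) Therefore $\varlimsup_n\|Tx_n\|_Y\leqslant R\varepsilon$. Since $\varepsilon$ is arbitrary, $T$ is completely continuous, and Proposition \ref{CCjeK} yields $T\in\KXY$.

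One measurability point needs checking: each $\varphi_n$ is $\mu$-measurable, because $\AAA x_n$ is strongly measurable. No measurability of $t\mapsto\|\AAA_t\|$ is needed, so the argument avoids any separability assumption.
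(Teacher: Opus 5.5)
Your proof is correct. Its core matches the paper's proof. You reduce to complete continuity via Proposition \ref{CCjeK}, observe that $\|\AAA_t x_n\|_Y\to 0$ a.e., and pass to the limit with Vitali's theorem on a set of finite measure, using the uniform integrability from Corollary \ref{UINT}. Beforehand you discard the $\|\cdot\|_1$-null part via Corollary \ref{sigfinl1}.

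Where you differ is the infinite-measure case. The paper first proves that $\int_{E_n}\AAA\,d\mu$ is compact for pieces $E_n$ of finite measure. It then invokes Theorem \ref{ACmuPb} to get $\int_E\AAA\,d\mu=\sum_n\int_{E_n}\AAA\,d\mu$ in operator norm. That step requires passing from ``$X$ contains no $\ell^1$'' to ``$X^*$ contains no $\mathfrak{c}_0$'' via the Bessaga--Pe{\l}czy\'nski result, Theorem \ref{c0l1}. The paper concludes by norm-closedness of $\KXY$.

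You instead cut off the tail $\bigsqcup_{k>K}\Omega_k$ uniformly over the whole weakly null sequence, using the $L^1_s$-countable additivity of $\lambda_\AAA$ (Theorem \ref{vecmes}). This gives complete continuity of $\int_E\AAA\,d\mu$ on all of $E$ at once, with no separate compactness-of-limits step.

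Your route stays entirely within the consequences of Theorem \ref{UCA} and avoids Theorem \ref{ACmuPb} and the $\mathfrak{c}_0$ step. The paper's route is more modular: it reduces to the finite-measure case and reuses the operator-valued measure result. Under the no-$\ell^1$ hypothesis, Theorem \ref{ACmuPb} is itself a consequence of Theorem \ref{vecmes}. So both arguments rest on the same uniform countable additivity, which you apply before integrating and the paper applies after.
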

\begin{proof}
By Corollary \ref{sigfinl1} we can assume that $(\WWW,\mu)$ is $\sigma$-finite measure space. We begin with the case $\mu(\WWW)<+\iii$. As $X$ does not contain a copy of $\ell^1$, by Proposition \ref{CCjeK} it suffices to show that $\int_E\AAA d\mu$ is completely continuous. Thus, let $(x_n)_{n=1}^\iii$ be a sequence in $X$ that weakly converges to $0$. As $\AAt$ is completely continuous we have $\varphi_n(t):=\|\AAA_tx_n\|_Y\rightarrow 0$ as $n\to\iii$ for a.\,e. $t\in\WWW$. Moreover, since $\sup_{n\in\N}\|x_n\|_X<+\iii$, the family $\{\varphi_n:n\in\N\}$ is uniformly integrable due to Corollary \ref{UINT} yielding
$$\lim_{n\to\infty}\left\|\left(\int_E\AAA\, d\mu\right)x_n\right\|_Y\leqslant\lim_{n\to\infty}\int_E\|\AAA_tx_n\|_Y\,d\mu(t)=0,$$
by the Vitali's Convergence Theorem, proving that $\int_E\AAA d\mu\in\KXY$.

Next, let $\mu(E)=+\iii$. As $X^*$ does not contain a copy of $\mathfrak{c}_0$ (see, for example \cite[Theorem 10, Page 48]{D}), o. v. measure $\nu_\AAA$ is countably additive by Theorem \ref{ACmuPb}. Since $\mu$ is $\sigma$-finite there is a decomposition $E = \bigsqcup_{n=1}^\infty E_n$ where $\mu(E_n) < +\iii$ for all $n\in\N$. Finally, as $\nu_\AAA (E) = \sum_{n=1}^\infty \nu_\AAA (E_n)$, where the series is norm convergent and every summand is a compact operator, it follows that $\int_E \AAA \, d\mu = \nu_\AAA (E)$ is a compact operator.
\end{proof}
The condition on $X$ in Theorem \ref{Cpt} is essential, even in the case of finite measure. In fact, we take $X = Y = \ell^1$, $\Omega = \N$, $\mu(\{ n \}) = 2^{-n}$  and $\AAA_n = 2^ne_n \otimes e_n^*$. 
Then we have $\mu(\Omega) = 1$, and the generated o.\,v. measure is not countably additive and the integral over $\Omega$ is not compact although $\AAA_n$ is compact for every $n \in \N$. Moreover these operators commute.

If in addition $X=Y=\HH$ is a Hilbert space we get $\cci$-Pettis integrability.

\begin{corollary}\label{petisCii}
Let $\AAA\in L_s^1(\Omega,\M,\mu,\BH)$ be such that $\AAA_t\in\cci$ for all $t\in\Omega$. Then $\AAA\in\PP(\WWW,\M,\mu,\cci)$. 
\end{corollary}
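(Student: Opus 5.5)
The plan is to verify the definition of Pettis integrability in the Banach space $\cci$, using the duality $\cci^*\cong\ccj$ via trace. Three things have to be checked: that $\AAA$ is weakly $\mu$-measurable as a $\cci$-valued function, that it is Dunford integrable, and that each Dunford integral $\prescript{\mathrlap{D}\,\,}{}{\int_{E}}\AAA\,d\mu\in\cci^{**}\cong\BH$ actually lies in $\cci$. The last point is where Theorem \ref{Cpt} enters.

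\emph{Measurability and Dunford integrability.} Fix $T\in\ccj$ and write it in Schmidt form $T=\sum_n s_n\, f_n\otimes g_n^*$, where $(s_n)\in\ell^1$ and $(f_n)$, $(g_n)$ are orthonormal. Then
\[
\tr(\AAA_t T)=\sum_n s_n\scal{\AAA_t f_n}{g_n}.
\]
Each summand is $\mu$-measurable, since $\AAA f_n$ is strongly measurable. The series is dominated by
\[
\sum_n |s_n|\,\|\AAA_t f_n\|_\HH .
\]
By monotone convergence,
\[
\int_\Omega \sum_n |s_n|\,\|\AAA_t f_n\|_\HH\,d\mu(t)\leqslant \sum_n |s_n|\,\|\AAA\|_1=\|T\|_{\ccj}\,\|\AAA\|_1<+\infty,
\]
so the dominating function is integrable. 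Hence $t\mapsto\tr(\AAA_tT)$ is measurable and integrable, and $\AAA$ is Dunford integrable into $\cci$. Dominated convergence also justifies exchanging sum and integral, which gives
\[
\int_E\tr(\AAA_tT)\,d\mu(t)=\sum_n s_n\Big\langle\Big(\int_E\AAA\,d\mu\Big)f_n,g_n\Big\rangle=\tr\Big(\Big(\int_E\AAA\,d\mu\Big)T\Big),
\]
where $\int_E\AAA\,d\mu$ is the strong integral from \eqref{Ulazakx}. So the Dunford integral, viewed as an element of $\cci^{**}=\BH$, is exactly $\int_E\AAA\,d\mu$.

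\emph{Membership in $\cci$.} A Hilbert space contains no copy of $\ell^1$, so Theorem \ref{Cpt} applies and shows that $\int_E\AAA\,d\mu\in\cci$ for every $E\in\M$. Therefore $\AAA$ is Pettis integrable in $\cci$.

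The main care is needed in the exchange of sum and integral above, but the bound $\int_\Omega\|\AAA_tf\|\,d\mu\leqslant\|\AAA\|_1$ for unit vectors $f$ makes this routine. If one also wants strong measurability in $\cci$, it follows from the Pettis Measurability Theorem \ref{PMT} because $\cci$ is separable; alternatively, one can invoke Proposition \ref{MerljivAprox} with the orthonormal-basis F.D.D., which is shrinking.
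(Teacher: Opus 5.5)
Your proposal is correct and follows the same route as the paper. Strong integrability gives integrability of $t\mapsto\tr(\AAA_tT)$ for every $T\in\ccj\cong\cci^*$, and Theorem \ref{Cpt} places each $\int_E\AAA\,d\mu$ in $\cci$. The only difference is that you spell out, via the Schmidt decomposition of $T$, that the Dunford integral in $\cci^{**}\cong\BH$ coincides with the strong integral, a step the paper leaves implicit.
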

\begin{proof}
As $\HH$ is a Hilbert space, it does not contain $\ell^1$, thus $\int_E\AAA d\mu\in\cci$ for all $E\in\M$ due to Theorem \ref{Cpt}. Strong integrability of $\AAA$ implies weak$^*$ $\BH$-integrability which is the same as weak $\cci$-integrability. Therefore, $\AAA$ is $\cci$-Pettis integrable.
\end{proof}
The converse of the previous Corollary does not hold, even in the case of finite measure space. Namely, there is a $\cci$-valued Pettis integrable function $\AAA$ that is not strongly integrable, where the operators $\AAA_t$ are positive and mutually commuting.

\begin{example}\label{ContEx} 
Let $(e_n)_{n=1}^\iii$ be an o.\,n.\,b. of $\HH$ and define $\AAA\colon[0,1]\rightarrow\BH$ by 
$$\AAA_t=\sum\limits_{n=1}^{\infty} n^\frac{3}{2}\chi_{\left(\frac{1}{n+1},\frac{1}{n}\right)}(t)\,e_n\otimes e_n^*,\qquad t\in[0,1].$$
Since for each $0 \leqslant t \leqslant 1$ the above sum reduces to one summand each operator $\AAA_t$ is compact, since they are diagonalized by the same o.\,n.\,b. they commute and clearly they are positive.
It is easy to verify that $\AAA\in\GG([0,1],m,\BH)$ and 
$$\int_{[0,1]}\AAA \,dm=\sum\limits_{n=1}^{\infty}\frac{\sqrt{n}}{n+1}\cdot e_n\otimes e_n^*\in\cci.$$
It follows from the proof of \cite[Theorem~2.3, (b)]{MMS} that the family $\AAA$ belongs to $\PP([0,1],m,\cci)$.
Let us prove that the family $\AAA$ is not in $L_s^1([0,1],m,\BH)$. To prove this, note that for all $x\in \HH$
we have the equalities
\begin{equation*}
    \begin{split}
 \int_{[0,1]} \|\AAA_tx\|_\HH \,dm(t) &= \sum_{n=1}^\infty \int_{\left(\frac{1}{n+1},\frac{1}{n}\right)}\|\AAA_tx\|_\HH \, dm(t) =
\sum_{n=1}^\infty \int_{\frac{1}{n+1}}^{\frac{1}{n}} \left\| n^{\frac{3}{2}}( e_n \otimes e_n^*) x\right\|_\HH  dt\\
=   &\sum_{n=1}^\infty \int_{\frac{1}{n+1}}^{\frac{1}{n}} n^{\frac{3}{2}} \cdot |\langle x, e_n \rangle | \,dt = \sum_{n=1}^\infty 
\frac{\sqrt{n}}{n+1}\cdot |\langle x, e_n \rangle |. 
    \end{split}
\end{equation*}
Since $\left(\frac{\sqrt{n}}{n+1}\right)_{n=1}^\iii\notin\ell^2$, the above sum can not be finite for all $x \in \HH$. 
\end{example}

Now, we will generalize the result in Corollary \ref{petisCii} to some classes of Banach spaces that are not necessarily Hilbert. We will focus on the case where the dual of $\KXY$ is representable as a projective tensor product. The following proposition gives sufficient conditions for such representation to be valid. It can be found in \cite[16.7]{DF}. For more details about approximation property and Radon-Nikodym property see, for example \cite{DU}.

\begin{proposition}\label{duality}
Assume $X$ and $Y$ are Banach spaces such that $X^{**}$ or $Y^*$ has approximation property and $X^{**}$ or $Y^*$ has the Radon-Nikodym property. Then, $(\KXY)^*\cong\XzzYz$, where the duality pairing is given by the expression 
$\scal{A}{T}=\sumn\scal{A^*y_n^*}{x_n^{**}}$ for $A\in\KXY$ and $T=\sumn x^{**}_n\otimes y^*_n\in\XzzYz$
where $\sum_{n=1}^\infty \| x_n^{**} \|_{X^{**}} \cdot \| y^*_n \|_{Y^*} < +\infty$.
\end{proposition}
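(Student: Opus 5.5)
The plan is to realize $\KXY$ as an injective tensor product and then identify its dual as a space of integral forms, which the Radon--Nikod\'ym and approximation hypotheses reduce to a projective tensor product.

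\emph{Step 1: $\KXY$ as an injective tensor product.} First I would show that $\KXY$ coincides with the norm closure of the finite rank operators. By Grothendieck's theorem, this holds as soon as $X^*$ or $Y$ has the approximation property. If $X^{**}$ has the approximation property, then so does $X^*$. If $Y^*$ has it, then so does $Y$. So the hypothesis suffices, and it yields an isometric identification
$$\KXY\cong X^*\widehat{\otimes}_\varepsilon Y,$$
where $x^*\otimes y$ corresponds to the rank-one operator $y\otimes x^*$ in the notation of the paper.

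\emph{Step 2: the dual as integral forms.} Next I would use the standard description of the dual of an injective tensor product. The dual $(X^*\widehat{\otimes}_\varepsilon Y)^*$ is the space of integral bilinear forms on $X^*\times Y$ with the integral norm. Equivalently, it is the space of Grothendieck integral operators $Y\to X^{**}$, or $X^*\to Y^*$. Concretely, by Pietsch's representation, a functional $\varphi$ on $\KXY$ is given by a regular Borel measure on the weak$^*$-compact set $B_{X^{**}}\times B_{Y^*}$, and
$$\varphi(A)=\int \scal{A^*y^*}{x^{**}}\,d\eta(x^{**},y^*).$$

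\emph{Step 3: integral forms are nuclear, then projective.} This is the main obstacle. Here I would invoke the Grothendieck--Lewis--Stegall theorem: if $E^*$ or $F^*$ has the Radon--Nikod\'ym property, then every integral form on $E\times F$ is nuclear, with equal integral and nuclear norms. I apply it with $E=X^*$ and $F=Y$, whose duals are $X^{**}$ and $Y^*$; this uses the RNP hypothesis. The proof idea is to represent the integral operator through a vector measure of bounded variation and differentiate it by the RNP. This produces a Bochner density, and a Bochner integral of rank-one tensors is a nuclear representation. The nuclear forms are then the image of the canonical map $X^{**}\widehat{\otimes}_\pi Y^*\to \mathcal N(X^*,Y^*)$. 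The approximation property of $X^{**}$ or $Y^*$ makes this map injective, hence isometric, since nuclear norm equals projective norm under AP.

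\emph{Step 4: the pairing.} Finally I would trace the pairing through these identifications. For $T=\sum_n x^{**}_n\otimes y^*_n$ with $\sum_n\|x^{**}_n\|_{X^{**}}\|y^*_n\|_{Y^*}<+\infty$, the value on $A\in\KXY$ is $\sum_n\scal{A^*y_n^*}{x_n^{**}}$. I would check this on rank-one $A=y\otimes x^*$, where $A^*y_n^*=\scal{y}{y_n^*}x^*$, and extend by density of finite rank operators from Step 1.
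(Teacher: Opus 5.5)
Your argument is correct, and it is the standard proof of this result; the paper gives no proof of its own but quotes it from \cite[16.7]{DF}. The chain you describe is the usual one: the approximation property gives $\KXY\cong X^*\widehat{\otimes}_\varepsilon Y$, the dual is the space of integral forms on $X^*\times Y$, the Radon--Nikod\'ym property (Lewis--Stegall) makes these forms nuclear with equal norms, and the approximation property of $X^{**}$ or $Y^*$ identifies the nuclear norm with the norm of $\XzzYz$. Your check of the pairing on rank-one operators, extended by density using the bound $|\sum_n\scal{A^*y_n^*}{x_n^{**}}|\leqslant\|A\|\sum_n\|x_n^{**}\|\,\|y_n^*\|$, completes the argument.
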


\begin{lemma}\label{fatuzvezda}
Let $\AAA\in L^1_s(\WWW,\M,\mu,\BXY)$, where $X$ is separable and does not contain $\ell^1$. Then, for all $x^{**}\in B_{X^{**}}$ and $y^*\in Y^*$, the function $\scal{\AAA^*y^*}{x^{**}}$ is integrable and and the following equality holds
\begin{equation}\label{XiXstarstar}
\sup_{\| x^{**} \|_{X^{**}} \leqslant 1} \int_\WWW|\scal{\AAA^*_ty^*}{x^{**}}|\,d\mu(t) = \sup_{\|x\|_X\leqslant1}\int_\WWW|\scal{\AAA_tx}{y^*}|\,d\mu(t).
\end{equation}
\end{lemma}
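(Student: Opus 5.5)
The plan is to reduce everything to vectors of $X$ by means of the Odell--Rosenthal Theorem (Theorem \ref{OdellRosenthal}). This is exactly where both hypotheses on $X$, separability and the absence of $\ell^1$, are used.

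Fix $y^*\in Y^*$ and $x^{**}\in B_{X^{**}}$. By Theorem \ref{OdellRosenthal} there is a sequence $(x_n)_{n=1}^\iii$ in $B_X$ such that $\scal{x_n}{x^*}\to\scal{x^*}{x^{**}}$ for every $x^*\in X^*$. Apply this for each fixed $t\in\WWW$ with $x^*=\AAA_t^*y^*$. This gives
$$\scal{\AAA_t^*y^*}{x^{**}}=\lim_{n\to\iii}\scal{x_n}{\AAA_t^*y^*}=\lim_{n\to\iii}\scal{\AAA_tx_n}{y^*},\qquad t\in\WWW.$$
Each function $t\mapsto\scal{\AAA_tx_n}{y^*}$ is $\mu$-measurable, because $\AAA x_n$ is strongly $\mu$-measurable. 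The function $t\mapsto\scal{\AAA^*_ty^*}{x^{**}}$ is therefore a pointwise limit of $\mu$-measurable functions, hence $\mu$-measurable. I expect this measurability step to be the only genuinely delicate point: without Odell--Rosenthal, $x^{**}$ need not be a weak$^*$ \emph{sequential} limit of elements of $B_X$, and measurability could fail.

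Next I will apply Fatou's lemma to $|\scal{\AAA_tx_n}{y^*}|$:
$$\int_\WWW|\scal{\AAA^*_ty^*}{x^{**}}|\,d\mu(t)\leqslant\liminf_{n\to\iii}\int_\WWW|\scal{\AAA_tx_n}{y^*}|\,d\mu(t)\leqslant\sup_{\|x\|_X\leqslant1}\int_\WWW|\scal{\AAA_tx}{y^*}|\,d\mu(t).$$
The right-hand side is at most $\|y^*\|_{Y^*}\cdot\|\AAA\|_1<+\iii$, by the definition of the norm in \eqref{pBnorm}. This proves integrability of $\scal{\AAA^*y^*}{x^{**}}$, and taking the supremum over $x^{**}\in B_{X^{**}}$ gives the inequality ``$\leqslant$'' in \eqref{XiXstarstar}.

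For the reverse inequality I will use the canonical embedding $X\hookrightarrow X^{**}$, which maps $B_X$ into $B_{X^{**}}$. For $x\in B_X$ regarded as an element of $X^{**}$ we have $\scal{\AAA_t^*y^*}{x}=\scal{x}{\AAA_t^*y^*}=\scal{\AAA_tx}{y^*}$. Hence the supremum on the left of \eqref{XiXstarstar} is taken over a larger set than the one on the right, which yields ``$\geqslant$'' and completes the proof of \eqref{XiXstarstar}.
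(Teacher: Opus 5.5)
Your proposal is correct and follows essentially the same route as the paper: Odell--Rosenthal gives $(x_n)$ in $B_X$ with $\scal{\AAA_tx_n}{y^*}\to\scal{\AAA_t^*y^*}{x^{**}}$ for every $t$, measurability follows as a pointwise limit, and Fatou's lemma with the bound $\|y^*\|_{Y^*}\|\AAA\|_1$ gives the nontrivial inequality. Your treatment of the reverse inequality through the canonical embedding $X\hookrightarrow X^{**}$ spells out the step the paper leaves implicit as the trivial direction.
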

\begin{proof} 
Let $(x^{**}, y^*) \in B_{X^{**}} \times Y^*$ and select an approximating sequence $(x_n)_{n=1}^\iii$ as in Theorem \ref{OdellRosenthal}. Since
$\langle \AAA_t^* y^*, x^{**} \rangle = \lim_{n \to \infty} 
\langle x_n, \AAA_t^* y^* \rangle = \lim_{n \to \infty} \langle \AAA_t x_n, y^* \rangle$ for all $t\in\WWW$, the function $\scal{\AAA^*y^*}{x^{**}}$ is $\mu$-measurable as the pointwise limit of $\mu$-measurable functions $\scal{\AAA x_n}{y^*}$. Moreover, using Fatou lemma we get
\begin{equation*}
\begin{split}&\int_\WWW|\scal{\AAA^*_ty^*}{x^{**}}|\,d\mu(t)=\\ &=\int_\WWW\lim_{n\rightarrow\iii}|\scal{x_n}{\AAA^*_ty^*}|\,d\mu(t) \leqslant\varliminf_{n\rightarrow\iii}\int_\WWW|\scal{x_n}{\AAA^*_ty^*}|\,d\mu(t)\\&\leqslant\sup_{n\in\N}\int_\WWW|\scal{\AAA_tx_n}{y^*}|d\mu(t)\leqslant\sup_{\|x\|_X\leqslant1}\int_\WWW|\scal{\AAA_tx}{y^*}|\,d\mu(t) \leqslant \| y^* \|_{Y^*} \cdot \| \AAA \|_1 < \infty,
\end{split}
\end{equation*}
proving the nontrivial inequality needed to establish \eqref{XiXstarstar}.
\end{proof}

The next lemma is also a technical result which, under appropriate conditions, gives an expression for $\scal{\left(\int_E\AAA^* d\mu\right)y^*}{x^{**}}$. 

\begin{lemma}\label{IntA*y*x**}
    Let $\AAA\in L^1_s(\WWW,\M,\mu,\BXY)$, where $X$ is separable and does not contain $\ell^1$. Then,  $\scal{\AAA^* y^*}{x^{**}}\in L^1(\Omega,\mu)$ for all $(x^{**}, y^*) \in X^{**} \times Y^*$ and 
    \begin{equation*}
\scal{\left(\int_E\AAA^* d\mu\right)y^*}{x^{**}}=\int_E\scal{\AAA^*_ty^*}{x^{**}}d\mu(t), \qquad E \in \M.
    \end{equation*}
\end{lemma}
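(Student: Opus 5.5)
The proof splits into two parts: integrability of $\scal{\AAA^*y^*}{x^{**}}$, and then the identity for the integral, obtained by passing to the limit along an Odell--Rosenthal approximating sequence.

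\emph{Integrability.} This is essentially Lemma \ref{fatuzvezda}. For $x^{**}\neq 0$ we rescale to $x^{**}/\|x^{**}\|_{X^{**}}\in B_{X^{**}}$ and apply that lemma. This gives measurability and the bound
$$\int_\WWW|\scal{\AAA^*_ty^*}{x^{**}}|\,d\mu(t)\leqslant \|x^{**}\|_{X^{**}}\|y^*\|_{Y^*}\|\AAA\|_1 .$$

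\emph{The identity.} Fix $E\in\M$ and assume again $x^{**}\in B_{X^{**}}$. By Theorem \ref{OdellRosenthal} (using that $X$ is separable and contains no $\ell^1$), choose $x_n\in B_X$ with $x_n\to x^{**}$ weak$^*$. For each $n$, Lemma \ref{adjungovani} together with \eqref{Ulazakx} gives
$$\scal{x_n}{\Big(\int_E\AAA^*\,d\mu\Big)y^*}=\int_E\scal{\AAA_tx_n}{y^*}\,d\mu(t).$$
On the left, $(\int_E\AAA^*d\mu)y^*\in X^*$, so weak$^*$ convergence of $(x_n)$ to $x^{**}$ sends the left side to $\scal{(\int_E\AAA^*d\mu)y^*}{x^{**}}$. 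On the right, the integrands $\scal{\AAA_tx_n}{y^*}=\scal{x_n}{\AAA_t^*y^*}$ converge pointwise for every $t$ to $\scal{\AAA^*_ty^*}{x^{**}}$.

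The main step is interchanging limit and integral on the right. I would use Vitali's theorem. The integrands are dominated by $\|y^*\|_{Y^*}\|\AAA_tx_n\|_Y$. By Corollary \ref{UINT}, the family $\{\|\AAA x\|_Y:\|x\|_X\leqslant 1\}$ is uniformly integrable. To control the tails when $\mu$ is not finite, I would use Theorem \ref{vecmes} (equivalently Theorem \ref{UCA}). Since $X$ is separable, Lemma \ref{sigfin} lets us assume $\mu$ is $\sigma$-finite with $\WWW=\bigsqcup_k\WWW_k$ and $\mu(\WWW_k)<+\iii$. Countable additivity of $\lambda_\AAA$ in $L^1_s$ then gives $\sup_{\|x\|_X\leqslant1}\int_{\bigsqcup_{k>K}\WWW_k}\|\AAA_tx\|_Y\,d\mu(t)\to 0$ as $K\to\iii$, and the same bound holds for the limit integrand by the Fatou estimate in Lemma \ref{fatuzvezda}. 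On the finite-measure piece $E\cap\bigsqcup_{k\leqslant K}\WWW_k$ we apply Vitali. Letting $\varepsilon\to0$ then yields
$$\int_E\scal{\AAA_tx_n}{y^*}\,d\mu(t)\longrightarrow\int_E\scal{\AAA^*_ty^*}{x^{**}}\,d\mu(t),$$
and the identity follows. For general $x^{**}$ it follows by homogeneity.
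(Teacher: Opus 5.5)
Your proof is correct. It differs from the paper's in how it handles sets of infinite measure.

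The paper first proves the identity for $\mu(E)<+\infty$, using exactly your Odell--Rosenthal/Vitali argument with uniform integrability from Corollary \ref{UINT}. For $\mu(E)=+\infty$ it then writes $E=\bigsqcup_n E_n$ with $\mu(E_n)<+\infty$. It invokes Corollary \ref{AzvezdaMera}, which gives countable additivity of $\nu_{\AAA^*}$ in operator norm and rests on Theorem \ref{ACmuPb}, and hence on Theorem \ref{l1sbuk} and the fact that $X^*$ contains no $\mathfrak{c}_0$. This yields $\nu_{\AAA^*}(E)=\sum_n\nu_{\AAA^*}(E_n)$ in norm, so the pairing with $x^{**}$ passes through the sum. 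Finally, the integrability from Lemma \ref{fatuzvezda} justifies interchanging the sum with the integral.

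You instead run a single $\varepsilon$-argument. Theorem \ref{vecmes}, equivalently the uniform countable additivity of Theorem \ref{UCA}, gives the uniform tail bound
$$\sup_{\|x\|_X\leqslant 1}\int_{\bigsqcup_{k>K}\WWW_k}\|\AAA_t x\|_Y\,d\mu(t)\to 0,$$
so the tails are small uniformly in $n$, and Vitali handles the finite-measure part. This is the same splitting the paper itself uses in the proof of Theorem \ref{SchAppr}.

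Your route never needs the operator-valued measure $\nu_{\AAA^*}$ or the chain Theorem \ref{l1sbuk} $\Rightarrow$ Theorem \ref{ACmuPb} $\Rightarrow$ Corollary \ref{AzvezdaMera}. It also dispenses with the final sum--integral interchange. The paper's route confines Vitali to sets of finite measure and reuses Corollary \ref{AzvezdaMera}, which it has already established.

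One small point: the Fatou bound in Lemma \ref{fatuzvezda} is stated over all of $\WWW$. For the tail of the limit integrand you should either rerun that Fatou argument on $\bigsqcup_{k>K}\WWW_k$, or note that $\scal{\AAA^*y^*}{x^{**}}\in L^1(\WWW,\mu)$, so its tails tend to $0$ by dominated convergence.
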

\begin{proof} 
The existence of $\int_E\AAA^* d\mu$ is provided by Lemma \ref{adjungovani}. The functions
$\AAA^*$ and $\AAA$ have the same support, hence by Lemma \ref{sigfin} we can assume $\mu$ is a $\sigma$-finite measure. Assume $\mu(E)<+\iii$
    and let $(x^{**}, y^*) \in X^{**} \times Y^*$. Then, we have
\begin{equation*}
\begin{split}\scal{\left(\int_E\AAA^*\, d\mu\right)y^*}{x^{**}}&=\lim_{n\rightarrow\iii} \scal{x_n}{\left(\int_E\AAA^*\, d\mu\right)y^*}=\lim_{n\rightarrow\iii}\int_E\scal{x_n}{\AAA^*_ty^*}d\mu(t)\\&=\int_E\lim_{n\rightarrow\iii}\scal{x_n}{\AAA^*_ty^*}\,d\mu(t)=\int_E\scal{\AAA^*_ty^*}{x^{**}}d\mu(t),
\end{split}
\end{equation*}
where the interchange of limit and integral is provided by Vitali's Convergence Theorem. Indeed, the sequence of functions $\psi_n(t)=\scal{x_n}{\AAA^*_ty^*}=\scal{\AAA_tx_n}{y^*}$ is uniformly integrable in $L^1(E,\mu)$, because $|\psi_n(t)|\leqslant\varphi_n(t)=\|\AAA_t x_n\|_Y$ and the family $\{\varphi_n:n\in\N\}$ is uniformly integrable in $L^1(E,\mu)$, due to Corollary \ref{UINT}.
    
If $\mu(E)=+\iii$ we have $E=\bigsqcup_{n=1}^\iii E_n$, where $\mu(E_n)<+\iii$ for all $n\in\N$. Then
\begin{equation*}
\begin{split}
&\scal{\left(\int_E\AAA^* \,d\mu\right)y^*}{x^{**}}=\scal{(\nu_{\AAA^*}(E))y^*}{x^{**}}=\scal{\left(\sumn\nu_{\AAA^*}(E_n)\right)y^*}{x^{**}}\\&=\sumn\scal{(\nu_{\AAA^*}(E_n))y^*}{x^{**}}=\sumn \scal{\left(\int_{E_n}\AAA^* \,d\mu\right)y^*}{x^{**}}\\&=\sumn\int_{E_n}\scal{\AAA^*_ty^*}{x^{**}}\,d\mu(t)
=\int_E\scal{\AAA^*_ty^*}{x^{**}}d\mu(t),
        \end{split}
    \end{equation*}
where the second and third equality follow from Corollary \ref{AzvezdaMera}. The fifth equality holds due to already proven part of the Lemma in the finite measure case. The function $\scal{\AAA^* y^*}{x^{**}}$ belongs to the space $L^1(\Omega,\mu)$, due to the Lemma \ref{fatuzvezda}, which allows us to interchange infinite sum and Lebesgue integral in the last step of the above computation.
\end{proof}

\begin{theorem}\label{DualK}
Assume $X$ and $Y$ are Banach spaces such that $X^{**}$ or $Y^*$ has approximation property and $X^{**}$ or $Y^*$ has the Radon-Nikodym property. In addition, assume $X$ is separable and does not contain a copy of $\ell^1$. If $\AAA$ belongs to $L^1_s(\WWW,\mu,\mathcalb{B}(X,Y))$ and $\AAA_t\in \mathcalb K (X, Y)$ for a.\,e. $t\in\WWW$ then $\AAA \in \PP(\WWW,\mu, \mathcalb K (X, Y))$ and Pettis norm of $\AAA$ is equal to 
\begin{equation}\label{PnormA}
\|\AAA\|_{\PP} = \sup_{\substack{\|x\|_X\leqslant1\\\|y^*\|_{Y^*}\leqslant1}} \int_\Omega 
|\scal{\AAA_t x}{y^*}|\,d\mu(t).
\end{equation}
\end{theorem}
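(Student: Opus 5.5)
The plan is to verify the three ingredients of Pettis integrability in $\KXY$ directly, using the duality of Proposition \ref{duality}.

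\textbf{Weak measurability and integrability.} By Proposition \ref{duality}, every functional on $\KXY$ is given by some $T=\sum_n x_n^{**}\otimes y_n^*\in\XzzYz$ with $\sum_n\|x_n^{**}\|\|y_n^*\|<\infty$, acting as $\scal{A}{T}=\sum_n\scal{A^*y_n^*}{x_n^{**}}$. Since $\AAA_t\in\KXY$ for a.e.\ $t$, we may redefine $\AAA$ to be $0$ on the exceptional null set; $\mu$ is complete, so nothing changes. For each $n$, Lemma \ref{fatuzvezda} shows that $t\mapsto\scal{\AAA_t^*y_n^*}{x_n^{**}}$ is $\mu$-measurable and that
\[\int_\WWW|\scal{\AAA_t^*y_n^*}{x_n^{**}}|\,d\mu\leqslant \|x_n^{**}\|\,\|y_n^*\|\,\|\AAA\|_1 .\]
Summing over $n$ and applying monotone convergence shows that $\sum_n|\scal{\AAA_t^*y_n^*}{x_n^{**}}|$ is integrable. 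Hence $t\mapsto\scal{\AAA_t}{T}$ is a.e.\ an absolutely convergent series of measurable functions, so it is measurable. It also lies in $L^1$ with norm at most $\|T\|_\pi\|\AAA\|_1$. Thus $\AAA$ is Dunford integrable as a $\KXY$-valued function.

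\textbf{The Dunford integral lies in $\KXY$.} Fix $E\in\M$. By Theorem \ref{Cpt}, $\int_E\AAA\,d\mu\in\KXY$, and by Lemma \ref{adjungovani} its adjoint is $\int_E\AAA^*\,d\mu$. Lemma \ref{IntA*y*x**} gives
\[\scal{\Big(\int_E\AAA\,d\mu\Big)^*y_n^*}{x_n^{**}}=\int_E\scal{\AAA_t^*y_n^*}{x_n^{**}}\,d\mu(t)\]
for each $n$. Summing over $n$, and using dominated convergence for the interchange, which is justified by the integrable majorant from the first step, yields
\[\Big\langle\int_E\AAA\,d\mu,T\Big\rangle=\int_E\scal{\AAA_t}{T}\,d\mu .\]
So the Dunford integral over $E$ is the compact operator $\int_E\AAA\,d\mu$, and therefore $\AAA\in\PP(\WWW,\mu,\KXY)$.

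\textbf{Pettis norm.} By definition, $\|\AAA\|_\PP=\sup_{\|T\|_\pi\leqslant1}\int_\WWW|\scal{\AAA_t}{T}|\,d\mu$.

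For the lower bound, take the elementary tensors $T=x\otimes y^*$ with $x\in B_X\subset B_{X^{**}}$ and $y^*\in B_{Y^*}$. These give $\scal{\AAA_t}{T}=\scal{\AAA_tx}{y^*}$, so $\|\AAA\|_\PP$ is at least the right-hand side of \eqref{PnormA}.

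For the upper bound, first write $T=\sum_n x_n^{**}\otimes y_n^*$ with $\sum_n\|x_n^{**}\|\|y_n^*\|<1+\varepsilon$. Then
\[\int|\scal{\AAA_t}{T}|\,d\mu\leqslant\sum_n\|x_n^{**}\|\,\|y_n^*\|\sup_{\|x^{**}\|\leqslant1,\|y^*\|\leqslant1}\int|\scal{\AAA_t^*y^*}{x^{**}}|\,d\mu .\]
By \eqref{XiXstarstar} of Lemma \ref{fatuzvezda}, applied for each fixed $y^*$ and then taking the supremum over $y^*$, this last supremum equals the right-hand side of \eqref{PnormA}. Letting $\varepsilon\to0$ finishes the norm identity.

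The main obstacle is the second step: Lemma \ref{IntA*y*x**} handles a single elementary tensor, and passing to a general element of $\XzzYz$ requires the summable majorant. That majorant comes from the uniform bound in Lemma \ref{fatuzvezda}, which is exactly where separability of $X$ and the absence of $\ell^1$ (through Odell–Rosenthal) enter.
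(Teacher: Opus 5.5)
Your proposal is correct and follows essentially the same route as the paper. The steps match: weak integrability comes from the uniform bound in Lemma \ref{fatuzvezda}, compactness of $\int_E\AAA\,d\mu$ from Theorem \ref{Cpt}, and the Dunford integral is identified via Lemmas \ref{adjungovani} and \ref{IntA*y*x**} with a term-by-term sum/integral interchange; the norm formula then follows from \eqref{XiXstarstar}. The only difference is that you prove the middle equality of the Pettis-norm computation directly, using elementary tensors $x\otimes y^*$ for the lower bound and a near-optimal projective representation for the upper bound, where the paper instead cites an analogous argument from \cite{MS}.
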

\begin{proof}
 For arbitrary $T$ in $X^{**} \widehat{\otimes}_\pi Y^*$ there is a representation $T = \sumn x_n^{**}\otimes y_n^*$ with $\sumn\|x_n^{**}\|_{X^{**}}\cdot\|y^*_n\|_{Y^*} < +\iii$. Then, using Lemma \ref{fatuzvezda}, we get
 \begin{equation*}
 \begin{split}
\int_\WWW|\scal{\AAA_t}{T}|\,d\mu(t) & = \int_\WWW\left|\sumn\scal{\AAA_t^*y_n^*}{x_n^{**}} \right|d\mu(t) \leqslant \sumn \int_\WWW|\scal{\AAA_t^*y_n^*}{x_n^{**}}|\,d\mu(t) \\ & \leqslant \sumn \|x_n^{**}\|_{X^{**}} 
\sup_{\| x^{**} \|_{X^{**}} \leqslant 1} \int_\WWW |\scal{\AAA_t^*y_n^*}{x^{**}}|\, d\mu(t)\\ & \leqslant \sumn \|x_n^{**}\|_{X^{**}} 
\sup_{\| x \|_X \leqslant 1}
\int_\WWW |\scal{\AAA_t x}{y_n^*}|\, d\mu(t) \\ 
& \leqslant \|\AAA\|_1 \sumn\|x_n^{**}\|_{X^{**}}\|y^*_n\|_{Y^*}<+\iii,
\end{split}
\end{equation*}
and this proves weak integrability of $\AAA$. Since $X$ is separable, support of $\AAA$ has $\sigma$-finite measure, by Lemma \ref{sigfin}. Thus, for every $E\in\M$ the operator $\int_E\AAA \,d\mu$ (in $L_s^1$ sense) is compact, due to  Theorem \ref{Cpt}. In order to prove that this $L^1_s$-integral is also Pettis integral we choose 
$T = \sumn x_n^{**} \otimes y_n^*$ in $\XzzYz$ and, using Proposition \ref{duality} and Lemma \ref{adjungovani}, we obtain
\begin{multline*}
    \scal{\int_E\AAA\,d\mu}{T}=\sumn\scal{\left(\int_E\AAA\, d\mu\right)^*y_n^*}{x_n^{**}}=\sumn\scal{\left(\int_E\AAA^*\, d\mu\right)y_n^*}{x_n^{**}}\\=\sumn\int_E\scal{\AAA^*_ty^*_n}{x^{**}_n}d\mu(t)=\int_E\sumn\scal{\AAA^*_ty^*_n}{x^{**}_n}d\mu(t)=\int_E\scal{\AAA_t}{T}d\mu(t).
\end{multline*}
Hence, $\AAA\in\PP(\WWW,\M,\mu,\KXY)$.

Proof of formula \eqref{PnormA} follows from the following equalities:
\begin{equation}\label{PetisTensor}
\begin{split}
\|\AAA\|_\PP & = \sup\left\{\int_\WWW|\langle\AAA_t,T\rangle|\,d\mu(t)\,:\,\|T\|_{X^{**}\widehat{\otimes}_\pi Y^*}\leqslant1\right\} \\
& = \sup_{\substack{\|x^{**}\|_{X^{**}} \leqslant1 \\ \|y^*\|_{Y^*} \leqslant1}} \int_\WWW 
|\scal{\AAA^*_ty^*}{x^{**}}|\,d\mu(t) = \sup_{\substack{\|x\|_{X} \leqslant1 \\ \|y^*\|_{Y^*}\leqslant1}} \int_\WWW|\scal{\AAA_tx}{y^*}| \,d\mu(t).
\end{split}
\end{equation}
The first equality in \eqref{PetisTensor} is definition of the Pettis norm $\|\AAA\|_{\PP}$ of the function $\AAA:\Omega\to\KXY$, see \eqref{Pnorm}, proof of the second one is analogous to the proof of \cite[Proposition~1.12]{MS}. The last equality in \eqref{PetisTensor} follows from Lemma \ref{fatuzvezda}. 
\end{proof}

\section{Subadditivity of the spectral radius for strongly integrable commuting families}

It is well known that for a commuting finite family $(A_j)_{j=1}^n$ in $\mathcalb{B}(X)$ we have:
$$r \left( \sum_{j=1}^n A_j \right) \leqslant \sum_{j=1}^n r(A_j).$$
This is spectral radius inequality.
Commutativity assumption is essential, the above is not true in general even for $n=2$ and $\dim X = 2$. The above inequality remains true if we have a commuting infinite family of operators $(A_n)_{n\in\N}$ such that the series $\sum_{n=1}^{\infty}A_n$ converges in the norm of $\mathcalb{B}(X)$: 
\begin{lemma}\cite[Lemma 2.7]{HMspectralr}\label{sprad}
    If $\sum_{n=1}^\infty A_n$ is a norm convergent sum of mutually commuting operators in $\mathcalb{B} (X)$, then 
    $$r \left( \sum_{n=1}^\infty A_n \right) \leqslant \sum_{n=1}^\infty r(A_n).$$
\end{lemma}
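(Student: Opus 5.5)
The plan is to reduce the infinite case to the finite spectral radius inequality for commuting operators and then pass to the limit. Let $S_N=\sum_{n=1}^N A_n$ and $S=\sum_{n=1}^\infty A_n$. If $\sum_{n=1}^\infty r(A_n)=+\infty$ there is nothing to prove, so assume the series of spectral radii converges. Since the $A_n$ commute, the finite inequality gives
$$r(S_N)\leqslant \sum_{n=1}^N r(A_n)\leqslant \sum_{n=1}^\infty r(A_n), \qquad N\in\N.$$
It therefore suffices to show $r(S)\leqslant \varliminf_{N\to\infty} r(S_N)$.

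Spectral radius is only upper semicontinuous in general, so norm convergence $S_N\to S$ alone does not give $r(S)\leqslant \varliminf r(S_N)$. Commutativity rescues this. The operators $S$ and $S_N$ commute, because $S$ is a norm limit of sums of the $A_n$ and each $A_n$ commutes with every $A_m$. For commuting $A,B$ one has $r(A+B)\leqslant r(A)+r(B)$. This follows from $\|(A+B)^k\|\leqslant\sum_j\binom{k}{j}\|A^j\|\|B^{k-j}\|$: fix $\varepsilon>0$, choose $C$ with $\|A^j\|\leqslant C(r(A)+\varepsilon)^j$ and $\|B^j\|\leqslant C(r(B)+\varepsilon)^j$, take $k$-th roots, and let $\varepsilon\to0$.

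Applying this to $S=S_N+(S-S_N)$ gives
$$r(S)\leqslant r(S_N)+r(S-S_N)\leqslant r(S_N)+\|S-S_N\|.$$
Letting $N\to\infty$ yields $r(S)\leqslant\varliminf_N r(S_N)\leqslant\sum_{n=1}^\infty r(A_n)$.

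The finite commuting inequality is the same two-operator estimate, iterated by induction on the number of summands, since partial sums commute with later terms. The only step needing care is the binomial estimate and its $\varepsilon$-argument, and that is routine.
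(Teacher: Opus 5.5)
Your proof is correct and complete. The paper does not prove this lemma itself. It quotes it from \cite{HMspectralr} and uses it as a black box in Corollary~\ref{JakaBrojacka} and Theorem~\ref{thradijus}, so there is no in-paper argument to match.

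Each step you use checks out:
\begin{itemize}
\item The binomial expansion needs only $AB=BA$.
\item The constant $C$ exists because $\|A^j\|^{1/j}\to r(A)$ and $r(A)+\varepsilon>0$.
\item $S$ commutes with $S_N$ by norm continuity of multiplication, hence so do $S_N$ and $S-S_N$.
\item Consequently $r(S)\leqslant r(S_N)+\|S-S_N\|$, and the limit $N\to\infty$ finishes the argument.
\end{itemize}

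For comparison, the usual alternative is Gelfand theory. Put all $A_n$ into a maximal commutative subalgebra $\mathcal C\subseteq\mathcalb B(X)$. It is closed, unital and inverse-closed, so it contains $S$ and $\sigma_{\mathcal C}(T)=\sigma(T)$. Hence $r(T)=\sup_\phi|\phi(T)|$, the supremum taken over the characters $\phi$ of $\mathcal C$. Characters are continuous linear functionals, so $|\phi(S)|=\bigl|\sum_n\phi(A_n)\bigr|\leqslant\sum_n r(A_n)$. This handles the infinite sum in one stroke, without partial sums or a limit argument.

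Your route trades that machinery for the spectral radius formula and the binomial theorem. It also makes the underlying mechanism explicit. On commuting pairs $r$ is subadditive, so $|r(S)-r(S_N)|\leqslant\|S-S_N\|$. This is exactly the continuity that mere upper semicontinuity of $r$ fails to provide.
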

In the case of counting measure $\mu$ on $\Omega=\N$ strong integral $\int_\N\AAA_n\,d\mu(n)$ is the strong (absolutely convergent) sum $\prescript{\raisebox{-1.5ex}{$\scriptstyle s$}}{}{\sum_{n=1}^\infty}\AAA_n$ of the sequence $(\AAA_n)_{n=1}^\infty$ of operators in $\mathcalb B(X)$. 
The above lemma and Theorem \ref{l1sbuk} give us the following corollary.
\begin{corollary}\label{JakaBrojacka}
If $X^*$ does not contain $\mathfrak{c}_0$ and $\AAA=(\AAA_n)_{n\in\N}$ is a commuting family in $\ell_s^1(\mathcalb B(X))$ then
\begin{equation}\label{rsum}
r\left({\sum_{n=1}^\infty}\AAA_n\right)\leqslant\sum_{n=1}^\infty r(\AAA_n).
\end{equation}
\end{corollary}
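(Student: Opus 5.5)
The plan is to combine Theorem \ref{l1sbuk} with Lemma \ref{sprad}. Since $X^*$ contains no isomorphic copy of $\mathfrak{c}_0$ and $\AAA\in\ell^1_s(\mathcalb B(X))$, Theorem \ref{l1sbuk} (with $Y=X$) shows that $\sum_{n=1}^\infty\AAA_n$ converges unconditionally, and in particular in the operator norm of $\mathcalb B(X)$.

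The first thing to check is that this norm sum is the operator written on the left of \eqref{rsum}, namely the strong sum $\int_\N \AAA_n\,d\mu(n)$. Norm convergence of partial sums implies strong convergence, and for each $x\in X$ the partial sums $\sum_{n=1}^N\AAA_n x$ converge absolutely to the Bochner integral $\prescript{\mathrlap{B}}{}{\int_{\N}} \AAA x\,d\mu$. By uniqueness of strong limits the two operators agree. This is also exactly the argument used in the proof of Theorem \ref{ACmuPb}, applied to the singletons $E_n=\{n\}$.

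Given the norm convergence and the mutual commutativity of the $\AAA_n$, Lemma \ref{sprad} applies directly and gives $r\left(\sum_{n=1}^\infty\AAA_n\right)\leqslant\sum_{n=1}^\infty r(\AAA_n)$. If the right-hand side is infinite there is nothing to prove. Otherwise it is a convergent series of nonnegative terms, since $r(\AAA_n)\leqslant\|\AAA_n\|$.

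There is no real obstacle here. The only point needing care is the identification of the strong sum with the norm sum, and that is immediate from the uniqueness of limits as above.
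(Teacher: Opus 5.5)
Your proposal is correct and follows essentially the same route as the paper: Theorem \ref{l1sbuk} gives norm convergence of $\sum_{n=1}^\infty\AAA_n$, this norm sum coincides with the strong sum by uniqueness of limits, and Lemma \ref{sprad} then yields \eqref{rsum}. One small point: your aside that $r(\AAA_n)\leqslant\|\AAA_n\|$ makes the right-hand side convergent is not justified, since norm convergence of $\sum_{n=1}^\infty\AAA_n$ does not imply $\sum_{n=1}^\infty\|\AAA_n\|<+\infty$; the remark is unnecessary anyway, because the inequality is trivial when the right-hand side equals $+\infty$.
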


In the case of strong sums, without condition $\AAA\in\ell_s^1(\mathcalb B(X))$ we can not obtain inequality \eqref{rsum}, as the next example shows.

\begin{example}
    Let $\HH = L^2(0,1)$ and let $V$ be the classical Volterra operator, i.\,e. $$Vf(x) = \int_{(0,x)} f(t)\,dm(t), \qquad \,0<x<1.$$ For all non negative integers $n$ we set $$\BBB_n = I - (I + nV)^{-1} = nV(I+nV)^{-1},\qquad\AAA_n = \BBB_n - \BBB_{n-1}.$$Note that the family $\AAA = (\AAA_n)_{n=1}^\infty$ consists of Volterra operators which commute with each other. We claim that 
\begin{equation}\label{Voltsum}
\prescript{\raisebox{-1.7ex}{$\scriptstyle s$}}{}{\sum_{n=1}^\infty}\AAA_n = I.
\end{equation}
Clearly, $\BBB_n$ is the $n$-th partial sums of the above series and therefore \eqref{Voltsum} is equivalent to strong convergence of $$\CCC_n = I - \BBB_n = (I + nV)^{-1}$$to the zero operator. Since the equality $\CCC_n f = g$ is equivalent to 
$$ f(t) = g(t) + n\int_{(0,t)} g(s) \, dm(s), \qquad 0 < t < 1, $$
one obtains, solving a Volterra integral equation of the second type, the following formula for $\CCC_n$:
$$\CCC_n f(t) = f(t) - n \int_{(0,t)} e^{-n(t-s)} f(s) \, dm(s), \qquad f \in L^2(0,1).$$
On the right hand side there is a convolution of $f$ with a positive function $\varphi_n$, where $\varphi_n(t) = n\varphi_1(nt)$ and 
$$\varphi_1(t) = \begin{cases}
0, & \text{if}\ t< 0, \\
e^{-t}, & \text{if}\,\,t \geqslant 0.
\end{cases}$$
Since $\|\varphi_1 \|_{L^1} = 1$, $(\varphi_n)_{n=1}^\infty$ is an approximation of identity. Hence, we have $L^2$ convergence of the corresponding integrals to $f$. Thus $\lim_{n\to\infty}\|\CCC_nf\|_{L^2(0,1)}=0$ for all $f\in L^2(0,1)$ i.\,e. the sequence of operators $(\CCC_n)_{n=1}^\infty$ strongly converges to zero and hence we have proven \eqref{Voltsum}. Now, we have
$$1=r(I)=r\left(\prescript{\raisebox{-1.5ex}{$\scriptstyle s$}}{}{\sum\limits_{n=1}^\infty}\AAA_n \right) >\sum_{n=1}^\infty r(\AAA_n)=0,$$
since $r(\AAA_n)=0$ for all $n\in\N$. Indeed, for all $n\in\N$ we have $$r(\AAA_n)\leqslant r((I + nV)^{-1})\cdot r(V)\cdot r((I + (n-1)V)^{-1})=0,$$ due to the mentioned commutativity and $r(V)=0$.
\hfill$\triangle$
\end{example}

It is natural to ask if there is analogous statement for integrals instead of sums of operators. An affirmative answer was given in \cite{HMspectralr}, where it was proved that
\begin{equation*}
r\left(\int_\Omega \AAA\,d\mu\right)\leqslant \int_\Omega r(\AAA_t)\,d\mu(t), 
\qquad \AAA \in L^1(\Omega, \mu, \mathcalb{B}(X)).
\end{equation*}
We note that the argument presented in \cite{HMspectralr} shows that the above inequality holds for Bochner integral of a commuting family of elements of a Banach algebra.

In this section we extend this inequality from the class of Bochner integrable functions to a subclass of strongly integrable functions, see Theorem \ref{thradijus} below.

We start with a simple lemma on commutativity of strong integrals.

\begin{lemma}\label{IntAB}
Let $\AAA,\BBB\in L_s^1(\Omega,\M,\mu,\mathcalb B(X))$ and assume $\AAA_t\BBB_s=\BBB_s\AAA_t$
for $\mu$-almost all $t,s\in\Omega$. Then, for all $E,F\in\M$ we have 
$$\int_E\AAA\,d\mu\int_F\BBB\,d\mu=\int_F\BBB\,d\mu\int_E\AAA\,d\mu.$$ 
\end{lemma}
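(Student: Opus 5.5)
The plan is to prove the identity by evaluating both sides on a fixed vector $x\in X$ and moving the operators inside the Bochner integrals from \eqref{Ulazakx}. Bounded operators commute with Bochner integrals, so this reduces the statement to an iterated integral of the commuting products $\AAA_t\BBB_s x$. Write $S=\int_E\AAA\,d\mu$ and $T=\int_F\BBB\,d\mu$. Since $T\in\BXY$ and $\BBB x\in L^1(\Omega,\mu,X)$, we get
$$STx = S\left(\prescript{\mathrlap{B}}{}{\int_{F}}\BBB_s x\,d\mu(s)\right)=\prescript{\mathrlap{B}}{}{\int_{F}} S\BBB_s x\,d\mu(s).$$
For each fixed $s$, \eqref{Ulazakx} applied to the vector $\BBB_s x$ gives $S\BBB_s x=\prescript{\mathrlap{B}}{}{\int_{E}}\AAA_t\BBB_s x\,d\mu(t)$. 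In the same way,
$$TSx=\prescript{\mathrlap{B}}{}{\int_{E}}T\AAA_t x\,d\mu(t),\qquad T\AAA_t x=\prescript{\mathrlap{B}}{}{\int_{F}}\BBB_s\AAA_t x\,d\mu(s).$$
Thus both $STx$ and $TSx$ are iterated Bochner integrals, taken in opposite orders, of the functions $(t,s)\mapsto\AAA_t\BBB_s x$ and $(t,s)\mapsto\BBB_s\AAA_t x$. These two functions agree $\mu\otimes\mu$-a.e. by the hypothesis, which I read as saying that the exceptional set of pairs $(t,s)$ is $\mu\otimes\mu$-null.

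The remaining step is a Fubini argument for vector-valued functions on $E\times F$, and I expect it to be the main obstacle. Two things are needed.

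\emph{Joint measurability.} Apply a functional $x^*\in X^*$ and work with scalars, so it suffices to show that $\Phi(t,s)=\scal{\AAA_t\BBB_s x}{x^*}$ is $\mu\otimes\mu$-measurable. The plan is to approximate $\BBB x$ on $F$ by $\mu$-simple functions $\sigma_k$ with $\sigma_k\to\BBB x$ a.e. and in $L^1$. Each function $\scal{\AAA_t\sigma_k(s)}{x^*}$ is a finite sum of products of a measurable function of $t$ and an indicator in $s$, hence jointly measurable. These functions converge to $\Phi$ pointwise for a.e. $s$, because each $\AAA_t$ is continuous. By Lemma \ref{sigfin}, or by restricting to the $\sigma$-finite supports of $\AAA x'$ for the countably many vectors involved, I may assume that the relevant pieces of $\Omega$ are $\sigma$-finite, so that $\mu\otimes\mu$ and Tonelli's theorem are available.

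\emph{Integrability.} Tonelli together with the estimate \eqref{normEst}-type bound gives
$$\int_F\int_E|\Phi(t,s)|\,d\mu(t)\,d\mu(s)\leqslant\|x^*\|\int_F\|\AAA\|_1\|\BBB_s x\|_X\,d\mu(s)\leqslant\|x^*\|\,\|\AAA\|_1\|\BBB\|_1\|x\|_X<+\iii.$$
With this bound, Fubini lets me exchange the order of integration.

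Putting these together, $\scal{STx}{x^*}$ is the double integral of $\Phi$ over $E\times F$. By the a.e. commutativity, $\Phi(t,s)=\scal{\BBB_s\AAA_t x}{x^*}$ a.e., and the double integral of this function, with the order exchanged, equals $\scal{TSx}{x^*}$. Since $x^*$ and $x$ were arbitrary, Hahn–Banach gives $ST=TS$.
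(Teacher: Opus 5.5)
Your proof is correct, but it takes a heavier route than the paper. The paper never forms a double integral. With $A=\int_E\AAA\,d\mu$ and $B=\int_F\BBB\,d\mu$, it first shows $A\BBB_t=\BBB_tA$ for a.e.\ $t$. It does this by moving the fixed operator $\BBB_t$ inside the integral defining $A$, once on the right (this is just \eqref{Ulazakx} applied to the vector $\BBB_tx$) and once on the left (a bounded operator commutes with a Bochner integral), using that $\AAA_s\BBB_t=\BBB_t\AAA_s$ for a.e.\ $s$. The same two facts then give $AB=\int_F A\BBB_t\,d\mu(t)=\int_F\BBB_tA\,d\mu(t)=BA$.

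Your own first display already contains this argument. For a.e.\ $s$, commutativity gives $S\BBB_sx=\int_E\AAA_t\BBB_sx\,d\mu(t)=\int_E\BBB_s\AAA_tx\,d\mu(t)=\BBB_sSx$. Hence $STx=\int_F\BBB_sSx\,d\mu(s)=TSx$ immediately. So the Fubini step you flagged as the main obstacle can be bypassed entirely, and with it the joint measurability of $\Phi$, the $L^1(\mu\otimes\mu)$ bound and the $\sigma$-finiteness reduction.

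What your route buys is that it uses the hypothesis literally in the form \emph{the exceptional set of pairs is $\mu\otimes\mu$-null}. The paper's argument needs the sectional form \emph{for a.e.\ $t$, commutation holds for a.e.\ $s$}. That form follows from yours by Tonelli once the supports are $\sigma$-finite, and trivially from the reading \emph{off a null set $N$ in each variable}.

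Two small points in your write-up:
\begin{itemize}
\item The operator you move inside $\int_F$ is $S$, not $T$, so what you need there is boundedness of $S$.
\item Lemma \ref{sigfin} is not available, since $X$ is not assumed separable here. Only your second option justifies Fubini: the $\sigma$-finite supports of $\AAA y$ for the countably many values $y$ of the simple approximants $\sigma_k$, together with the support of $\BBB x$, and symmetrically for $\scal{\BBB_s\AAA_tx}{x^*}$. That option does work.
\end{itemize}
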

\begin{proof}
Since $\AAA$ and $\BBB$ belong to $L_s^1(\Omega,\mu,\mathcalb B(X))$, we can define operators  
$$A:=\int_E\AAA\,d\mu\in\mathcalb B(X)\,\,\,\text{and}\,\,\,B:=\int_F\BBB\,d\mu\in\mathcalb B(X).$$For $\mu$-almost all $t\in\Omega$ we have the equality $A\BBB_t=\BBB_t A$. Indeed, for fixed $t\in\Omega$, using \cite[Proposition 2.25]{MAMK} we have

\begin{equation*}
    \begin{split}
A\BBB_t&=\left(\int_E\AAA\,d\mu\right)\BBB_t=\int_E\AAA_s\BBB_t\,d\mu(s)\\&=\int_E\BBB_t\AAA_s\,d\mu(s)=\BBB_t\left(\int_E\AAA\,d\mu\right)=\BBB_tA.
    \end{split}
\end{equation*}
Using this equality and again \cite[Proposition 2.25]{MAMK} we obtain: 

\begin{equation*}
\begin{split}
AB=A\left(\int_F\BBB\,d\mu\right)=\int_F A\BBB_t\,d\mu(t)=\int_F\BBB_tA\,d\mu(t)=\left(\int_F\BBB\,d\mu\right) A=BA,
\end{split}
 \end{equation*}
and this completes the proof.
\end{proof}
The above lemma can also be proved for families in $L_w^1(\Omega,\mu,\mathcalb B(X))$ using the same arguments, provided $X$ is reflexive. Indeed, under this assumption, the operators $A$ and $B$ appearing in the above proof belong to $\mathcalb B(X)$.

We need some measurability results and we start with the following observation: if $\AAA \in \mathcal M_w(\Omega, \M, \mu, \mathcalb B(Y, Z))$ and $(y_t)_{t\in\Omega}$ is a strongly $\mu$-measurable family in $Y$, then $(\AAA_t y_t)_{t \in \Omega}$ is a weakly $\mu$-measurable family in $Z$. 
To see that, choose a sequence of $\mu$-simple functions $s_n : \Omega \to Y$ such that $\lim_{n\to\infty} s_n(t) = y_t$ for $\mu$-almost all $t\in\Omega$. Then, for every $z^* \in Z^*$, we have $\langle \AAA_t y_t, z^* \rangle = \lim_{n \to \infty} \langle \AAA_t s_n(t), z^* \rangle$. Since the scalar function $\Omega\ni t\mapsto\langle \AAA_t s_n(t), z^* \rangle\in\C$ is $\mu$-measurable for every $n \in \N$, our claim is proven. 

\begin{lemma}
    Let $\AAA^{(1)} \in \mathcal M_w(\Omega, \M, \mu, \mathcalb B(X))$ and $\AAA^{(j)} \in \mathcal M_s(\Omega, \M, \mu, \mathcalb B(X))$ for $j\in\{2,\ldots,n\}$. 
    Then $\AAA^{(1)} \AAA^{(2)} \cdots \AAA^{(n)} \in \mathcal M_w(\Omega, \M, \mu, \mathcalb B(X))$.
\end{lemma}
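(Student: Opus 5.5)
We argue by induction on $n$, using the observation made just before the statement as the inductive step. For $n=1$ there is nothing to prove. Suppose $n\geqslant 2$. Set $\BBB=\AAA^{(2)}\cdots\AAA^{(n)}$. We must show that for every $x\in X$ the family $(\AAA^{(1)}_t\BBB_t x)_{t\in\Omega}$ is weakly $\mu$-measurable. By the observation (applied with $Y=Z=X$ and $\AAA=\AAA^{(1)}\in\mathcal M_w(\Omega,\M,\mu,\mathcalb B(X))$), it suffices to know that $(\BBB_t x)_{t\in\Omega}$ is a strongly $\mu$-measurable family in $X$, i.e.\ that $\BBB\in\mathcal M_s(\Omega,\M,\mu,\mathcalb B(X))$.

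So the real content is the auxiliary claim that a product of finitely many pointwise (strongly) measurable families is again pointwise strongly measurable. Again by induction, it reduces to two factors. Let $\CCC,\mathcal D\in\mathcal M_s(\Omega,\M,\mu,\mathcalb B(X))$ and fix $x\in X$. Then $y_t=\mathcal D_t x$ is strongly $\mu$-measurable, so there are $\mu$-simple $s_k\colon\Omega\to X$ with $s_k(t)\to y_t$ a.\,e. Writing $s_k=\sum_i x_{k,i}\chi_{A_{k,i}}$, we get $\CCC_t s_k(t)=\sum_i \chi_{A_{k,i}}(t)\,\CCC_t x_{k,i}$, a finite sum of strongly measurable functions, hence strongly measurable. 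Continuity of each $\CCC_t$ gives $\CCC_t s_k(t)\to \CCC_t y_t$ for a.\,e. $t$. An a.\,e. pointwise limit of strongly $\mu$-measurable functions is strongly $\mu$-measurable (e.g.\ via Theorem \ref{PMT}: weak measurability passes to limits, and the a.\,e. range lies in the closure of a countable union of separable sets; or cite \cite[Corollary~1.1.23]{ABS} as in the proof of Proposition \ref{MerljivAprox}). Thus $\CCC\mathcal D x$ is strongly measurable.

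Combining, $\BBB\in\mathcal M_s$, and the observation yields $\AAA^{(1)}\BBB x$ weakly measurable for each $x$. It remains only to remark that the conclusion does not depend on the chosen representatives. Pointwise equivalence of the $\AAA^{(j)}$ for $j\geqslant 2$ changes $\BBB_t x$ only on a null set; this requires the same style of simple-function argument, since equivalence is per vector. Weak equivalence of $\AAA^{(1)}$ is handled inside the limit formula $\langle\AAA^{(1)}_t y_t,x^*\rangle=\lim\langle \AAA^{(1)}_t s_k(t),x^*\rangle$, where only countably many vectors $x_{k,i}$ appear. The main (mild) obstacle is precisely this closure of strong measurability under a.\,e. limits, together with the well-definedness on equivalence classes. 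Both are handled by reducing to countably many vectors.
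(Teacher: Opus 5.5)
Your proof is correct, but it groups the product differently from the paper. The paper proves only the case $n=2$, i.e.\ $\mathcal M_w\cdot\mathcal M_s\subseteq\mathcal M_w$, directly from the observation with $y_t=\AAA^{(2)}_t x$. It then inducts by peeling off factors on the right:
\[
\AAA^{(1)}\cdots\AAA^{(n)}=\bigl(\AAA^{(1)}\cdots\AAA^{(n-1)}\bigr)\AAA^{(n)},
\]
where the bracket lies in $\mathcal M_w$ by the inductive hypothesis and $\AAA^{(n)}\in\mathcal M_s$. That route never needs any fact about products of strongly measurable families.

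You instead write $\AAA^{(1)}\bigl(\AAA^{(2)}\cdots\AAA^{(n)}\bigr)$. This forces you to prove the auxiliary claim that $\mathcal M_s(\Omega,\M,\mu,\mathcalb B(X))$ is closed under products, and hence to invoke closure of strong measurability under a.e.\ limits. That argument is sound: approximate $\mathcal D_t x$ by simple functions, use continuity of each $\CCC_t$, then pass to the limit. So nothing fails; it is simply more than the statement requires.

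Your route does buy a by-product. It gives $\AAA^k\in\mathcal M_s$ directly for $\AAA\in\mathcal M_s$. The paper extracts this later, in Lemma \ref{rismeas}, only by combining the present lemma with the identification $\mathcal M_w=\mathcal M_s$ for separable $X$.

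Your remarks on independence of representatives are also correct. Reducing to the countably many values taken by the approximating simple functions settles a point the paper leaves implicit.
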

\begin{proof}
It suffices to prove the statement for $n=2$, then the proof is completed by an inductive argument. So we assume $n=2$ and fix $x \in X$. Then weak $\mu$-measurability of $\AAA^{(1)} \AAA^{(2)} x$ follows from the above observation applied to the family $y_t = \AAA^{(2)}_t x$.
\end{proof}

In the preceding part of this section, separability of the Banach space X was not required. However, for the following statement, which partially solves Problem 2.11 from \cite{HMspectralr}, this assumption will be needed

\begin{lemma}\label{rismeas}
Assume $X$ is separable and $\AAA \in \mathcal M_w(\Omega, \M, \mu, \mathcalb B(X))$. Then the function $r_\AAA : \Omega \to \R$ defined by $r_\AAA (t) = r(\AAA_t)$ is $\mu$-measurable.
\end{lemma}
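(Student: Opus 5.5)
We will use Gelfand's formula $r(A)=\lim_{n\to\infty}\|A^n\|^{1/n}=\inf_{n}\|A^n\|^{1/n}$, which writes $r_\AAA$ as a pointwise limit (equivalently, an infimum) of countably many functions $t\mapsto\|\AAA_t^n\|^{1/n}$. It therefore suffices to show that for each $n\in\N$ the function $t\mapsto\|\AAA_t^n\|$ is $\mu$-measurable. The limit (or infimum) of a sequence of measurable functions is measurable, which gives the claim.

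To handle $\|\AAA_t^n\|$, fix a countable dense subset $S$ of $B_X$. Since $X$ is separable, $X^*$ contains a countable norming set $\{x_k^*\}\subset B_{X^*}$, i.e. $\|y\|_X=\sup_k|\scal{y}{x_k^*}|$ for all $y\in X$. Then
$$\|\AAA_t^n\|=\sup_{x\in S}\sup_{k\in\N}\left|\scal{\AAA_t^n x}{x_k^*}\right|,$$
a countable supremum. So we need the measurability of $t\mapsto\scal{\AAA_t^nx}{x^*}$ for fixed $x$ and $x^*$, i.e. that the power family $\AAA^n=(\AAA_t^n)_{t\in\Omega}$ belongs to $\mathcal M_w(\Omega,\M,\mu,\mathcalb B(X))$.

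This is where the preceding lemma on products enters. It requires all factors except the first to lie in $\mathcal M_s$. Since $X$ is separable, Theorem \ref{PMT} shows that weak measurability of $\AAA x$ implies strong measurability of $\AAA x$; this is exactly \eqref{Mws} with $Y=X$ separable. Hence $\AAA\in\mathcal M_s(\Omega,\M,\mu,\mathcalb B(X))$, and applying the product lemma with $\AAA^{(1)}=\dots=\AAA^{(n)}=\AAA$ gives $\AAA^n\in\mathcal M_w$. One subtlety is the equivalence classes. Measurability is required of the given representative $\AAA_t$, so we should work with a fixed representative. The product lemma as proved applies to actual functions (pointwise limits of simple approximations), so the conclusion holds off a null set. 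Completeness of $\mu$ then makes the resulting function measurable.

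The main obstacle I expect is exactly this bookkeeping: ensuring that $\AAA x$ is strongly measurable for every $x$ at once, and that the countable-supremum formula uses no uncountable union of null sets. Both are settled by countability of $S$ and $\{x_k^*\}$ together with completeness of $\mu$. Beyond that the argument is routine.
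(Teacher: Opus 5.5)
Your proposal is correct and follows the paper's proof. You upgrade $\AAA$ to $\mathcal M_s$ via \eqref{Mws}, apply the product lemma to get measurability of the powers $\AAA^n$, and conclude with $r(\AAA_t)=\inf_n\|\AAA_t^n\|^{1/n}$; the only cosmetic difference is that you write $\|\AAA_t^n\|$ as a countable supremum over a dense subset of $B_X$ and a countable norming set in $B_{X^*}$, whereas the paper applies \eqref{Mws} once more to get $\AAA^n\in\mathcal M_s$ and then uses $\|\AAA_t^n\|=\sup_{x\in S}\|\AAA_t^n x\|_X$.
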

\begin{proof}
    Since $X$ is separable we have the equality (see \eqref{Mws}) $$\mathcal M_w(\Omega, \M, \mu, \mathcalb B(X)) = \mathcal M_s(\Omega, \M, \mu, \mathcalb B(X))$$and thus the above lemma gives $$\AAA^n = (\AAA_t^n)_{t\in\Omega} \in \mathcal M_s(\Omega, \M, \mu, \mathcalb B(X)).$$Hence, using again separability of $X$, 
    $\Omega\ni t \mapsto \| \AAA_t^n \|^{1/n}\in\R$ is measurable for every $n \in \N$ (see comments after the equality \eqref{Mws}), and this suffices since the equality $r(\AAA_t)= \inf\limits_{n\in\N}\| \AAA_t^n \|^{1/n}$ holds for all $t\in\Omega$.
\end{proof}
Strictly speaking, Lemma \ref{rismeas} is not necessary for the proof of Theorem \ref{thradijus}, because $\mu$-measurability of $\Omega\ni t\mapsto r(\AAA_t)\in\R$ in Theorem \ref{thradijus} can be deduced from  assumed strong $\mu$-measurability of the family $\AAA$.

Now, we present a generalization of the main result from \cite{HMspectralr}
 for a strongly $\mu$-measurable family $\AAA\in L_{s}^1(\Omega,\mu,\mathcalb B(X))$ which consists of mutually commuting operators $\AAA_t$.

\begin{theorem}\label{thradijus}
Assume $X^*$ does not contain an isomorphic copy of $\mathfrak{c}_0$ and let $\AAA\in L_{s}^1(\Omega,\mu,\mathcalb B(X))$ be a strongly $\mu$-measurable family of mutually commuting operators. Then, we have the inequality
\begin{equation*}\label{rpb}
r\left(\int_\Omega \AAA\,d\mu\right)\leqslant \int_\Omega r(\AAA_t)\,d\mu(t).\end{equation*}
\end{theorem}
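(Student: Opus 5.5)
The plan is to reduce the statement to the countable case, Corollary \ref{JakaBrojacka}, by means of countable additivity of $\nu_\AAA$ (Theorem \ref{ACmuPb}) and an approximation of the integrand on small pieces of $\Omega$.

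We may assume $\int_\Omega r(\AAA_t)\,d\mu(t)<+\infty$. The function $t\mapsto r(\AAA_t)$ is measurable: strong $\mu$-measurability gives $\AAA_t^n$ strongly measurable, hence $\|\AAA_t^n\|^{1/n}$ is measurable, and we take the infimum over $n$. By strong measurability the support of $\AAA$ is $\sigma$-finite.

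First partition $\Omega$ (up to a null set) into countably many disjoint sets $E_k$ of finite measure. Then $\nu_\AAA(\Omega)=\sum_k \nu_\AAA(E_k)$ converges in operator norm by Theorem \ref{ACmuPb}. The summands commute by Lemma \ref{IntAB}. Lemma \ref{sprad} therefore gives
\[
r\Big(\int_\Omega\AAA\,d\mu\Big)\leqslant\sum_k r\Big(\int_{E_k}\AAA\,d\mu\Big).
\]
So it suffices to prove $r(\int_E\AAA\,d\mu)\leqslant\int_E r(\AAA_t)\,d\mu(t)$ for each $E$ of finite measure.

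Next fix $\varepsilon>0$ and such an $E$. By strong measurability there is a countable-valued function $\BBB=\sum_j T_j\chi_{F_j}$ on $E$ with $\|\AAA_t-\BBB_t\|<\varepsilon$, where $T_j=\AAA_{s_j}$ is chosen among the values of $\AAA$ itself. This is possible because a.e.\ $\AAA$ takes values in a separable set; cover it by countably many $\varepsilon$-balls centered at points of the range, and define $F_j$ as the preimages, made disjoint. We would like to write
\[
\int_E\AAA\,d\mu=\sum_j\int_{F_j}\AAA\,d\mu,
\]
which holds in norm by Theorem \ref{ACmuPb}, with commuting summands by Lemma \ref{IntAB}. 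Then
\[
r\Big(\int_E\AAA\,d\mu\Big)\leqslant\sum_j r\Big(\int_{F_j}\AAA\,d\mu\Big).
\]
For each $j$ we write $\int_{F_j}\AAA\,d\mu=\mu(F_j)T_j+R_j$ with $R_j=\int_{F_j}(\AAA-T_j)\,d\mu$. Here $R_j$ commutes with $T_j$, and $\|R_j\|\leqslant\varepsilon\mu(F_j)$ by \eqref{normEst}. Using $r(A+B)\leqslant r(A)+r(B)\leqslant r(A)+\|B\|$ for commuting $A,B$, we get
\[
r\Big(\int_{F_j}\AAA\,d\mu\Big)\leqslant\mu(F_j)\,r(T_j)+\varepsilon\mu(F_j).
\]

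The main obstacle is comparing $\mu(F_j)\,r(T_j)$ with $\int_{F_j}r(\AAA_t)\,d\mu$. The spectral radius is only upper semicontinuous, so $\|\AAA_t-T_j\|<\varepsilon$ does not bound $r(T_j)$ by $r(\AAA_t)+\varepsilon$. Commutativity resolves this: for commuting operators, $r(T_j)\leqslant r(\AAA_t)+r(T_j-\AAA_t)\leqslant r(\AAA_t)+\varepsilon$. Integrating over $F_j$ then gives $\mu(F_j)\,r(T_j)\leqslant\int_{F_j}r(\AAA_t)\,d\mu+\varepsilon\mu(F_j)$.

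Summing over $j$ yields
\[
r\Big(\int_E\AAA\,d\mu\Big)\leqslant\int_E r(\AAA_t)\,d\mu(t)+2\varepsilon\mu(E),
\]
and letting $\varepsilon\to0$ finishes the proof. The construction of the $T_j$ as actual values of $\AAA$, which is needed so that the $T_j$ commute with every $\AAA_t$, is the delicate point. It uses the essential separability of the range given by Theorem \ref{PMT}.
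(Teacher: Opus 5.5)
Your proof is correct. Its outer skeleton is the same as the paper's: you split into countably many pieces, use Theorem \ref{ACmuPb} for norm convergence of the sum of the $\nu_\AAA$-values, and combine Lemma \ref{IntAB} with Lemma \ref{sprad}.

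The difference lies in how the pieces are chosen and how the inequality is obtained on each one.

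\textbf{The paper.} For $\mu(\Omega)<+\infty$ it cuts along the level sets $\Omega_n=\{t:\|\AAA_t\|\in[n,n+1)\}$. On each $\Omega_n$, strong measurability makes $\AAA$ Bochner integrable. It then invokes the known Bochner-case inequality \cite[Theorem 2.8]{HMspectralr} on every $\Omega_n$ as a black box.

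\textbf{Your route.} You instead cut a finite-measure set $E$ into pieces $F_j$ on which $\AAA$ stays within $\varepsilon$ of a fixed operator $T_j=\AAA_{s_j}$. You then prove the inequality on each piece by hand, using $r(A+B)\leqslant r(A)+\|B\|$ for commuting $A,B$ twice:
\begin{itemize}
\item first for the splitting $\int_{F_j}\AAA\,d\mu=\mu(F_j)T_j+R_j$;
\item then to get $r(T_j)\leqslant r(\AAA_t)+\varepsilon$, which is needed because $r$ is only upper semicontinuous.
\end{itemize}
Choosing the $T_j$ among the values of $\AAA$ itself, via the essentially separable range from Theorem \ref{PMT}, is exactly what makes both commutations legitimate. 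You correctly identified this as the key point.

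\textbf{Comparison.} The paper's route is shorter, given the citation. Yours is self-contained and in effect re-proves the Bochner case along the way. It also makes visible that strong measurability enters only through the separable-range approximation, and commutativity only through subadditivity of the spectral radius.
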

\begin{proof}
We assume at first that $\mu(\Omega)<+\infty$. For all $n\in\N\cup\{0\}$ define the set 
$$\Omega_n=\{t\in\Omega:\|\AAA_t\|\in[n,n+1)\}.$$
Assumed strong $\mu$-measurability of $\AAA$ implies $\mu$-measurability of the real valued function $\Omega\ni t\mapsto\|\AAA_t\|\in\R$. Hence the sets $\Omega_n$ are $\mu$-measurable. Clearly, we have the equality $\Omega=\bigsqcup_{n=0}^{\infty}\Omega_n$. Since $$\int_{\WWW_n}\|\AAA_t\|\,d\mu(t)\leqslant(n+1)\cdot\mu(\WWW_n)<+\iii,$$we conclude $\AAA\in L^1(\WWW_n,\mu,\mathcalb B(X))$ for all $n\in\N\cup\{0\}$ (there we again use assumed strong $\mu$-measura\-bility of $\AAA$). 
Now, we have the next calculation:
\begin{equation}\label{racunr}
    \begin{split}
r\left(\int_\Omega\AAA\,d\mu\right)&=r(\nu_\AAA(\WWW))=r\left(\sum_{n=0}^\iii\nu_\AAA(\WWW_n)\right)\leqslant\sum_{n=0}^{\infty}r\left(\int_{\Omega_n}\AAA\,d\mu\right)\\&\leqslant
        \sum_{n=0}^{\infty}\int_{\Omega_n}r(\AAA_t)\,d\mu(t)=\int_{\Omega}r(\AAA_t)\,d\mu(t).
    \end{split}
\end{equation}
The second equality in \eqref{racunr} is based on Theorem \ref{ACmuPb}, which tells us that $\nu_\AAA$ is a countably additive $\mathcalb B(X)$-valued measure. 
The first inequality in \eqref{racunr} follows from Lemma \ref{IntAB} and Lemma \ref{sprad}. 
For the second inequality in \eqref{racunr} we apply \cite[Theorem 2.8]{HMspectralr}, and here we use assumed strong $\mu$-measurability of $\AAA$ to conclude Bochner integrability of $\AAA$ on each set $\Omega_n$.

Now we consider the general case. Since $\AAA$ has $\sigma$-finite support
we can assume $\WWW = \bigsqcup_{n=1}^\iii \Omega_n$, where $\mu(\Omega_n) < +\iii$ for all $n\in\N$. Then, by already proved part of theorem for finite measures we have the inequality
$$r\left(\int_{\Omega_n} \AAA\, d\mu\right) \leqslant \int_{\Omega_n} r(\AAA_t)\,d\mu(t), \qquad n \in \N.$$
Now, similarly to the calculations in \eqref{racunr}, we get 
$$r\left(\int_\WWW\AAA \,d\mu\right)\leqslant\sum_{n=1}^\iii \int_{\Omega_n}r(\AAA_t)\,d\mu(t)=\int_\WWW r(\AAA_t)\,d\mu(t).\qedhere$$
\end{proof}

In the next statement we again use the F.\,D.\,D. property to derive strong $\mu$-measurability of o.\,v. function $\AAA$. 


\begin{corollary}
Assume $X$ has a shrinking F.\,D.\,D., $\AAA \in  L_s^1(\Omega,\mu,\mathcalb B(X))$ is a commuting family of compact operators. Then, we have the following inequality
$$r\left(\int_\WWW\AAA\, d\mu\right)\leqslant\int_\WWW r(\AAA_t)\,d\mu(t).$$
Moreover, if $\AAA_t$ are Volterra operators, so is $\int_\WWW\AAA d\mu$.
\end{corollary}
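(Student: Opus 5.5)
The plan is to reduce everything to Theorem \ref{thradijus}, which needs three inputs: $X^*$ contains no copy of $\mathfrak{c}_0$, $\AAA$ is strongly $\mu$-measurable, and the family commutes. The last is a hypothesis. For the first, a shrinking F.\,D.\,D. makes $X^*$ separable: $X^*$ is the closed span of the finite dimensional spaces $P_n^*X^*$, since $P_n^*x^*\to x^*$ in norm. Hence $X$ contains no copy of $\ell^1$, because $(\ell^1)^*=\ell^\infty$ is non-separable and a copy of $\ell^1$ in $X$ would make $X^*$ non-separable via Hahn--Banach restriction. Theorem \ref{c0l1} then shows that $X^*$ contains no copy of $\mathfrak{c}_0$. (One could also argue directly: a separable dual cannot contain $\mathfrak{c}_0$.)

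The main step is strong $\mu$-measurability of $\AAA$. Since $\AAA\in L^1_s$, it lies in $\mathcal M_s(\Omega,\M,\mu,\mathcalb B(X))$, and every $\AAA_t$ is compact. Proposition \ref{MerljivAprox} with $Y=X$ therefore gives $\AAA\in\mathcal M(\Omega,\M,\mu,\mathcalb K(X))$, and since the inclusion $\mathcalb K(X)\subset\mathcalb B(X)$ is isometric, $\AAA$ is strongly $\mu$-measurable as a $\mathcalb B(X)$-valued function. Theorem \ref{thradijus} then yields
\[
r\left(\int_\WWW\AAA\,d\mu\right)\leqslant\int_\WWW r(\AAA_t)\,d\mu(t).
\]
I expect this step to be essentially immediate, because the paper has already set up Proposition \ref{MerljivAprox} for this purpose. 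The only point to check is that its hypotheses ("compact a.e.", a shrinking F.\,D.\,D.) match exactly.

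For the Volterra statement, $r(\AAA_t)=0$ for all $t$, so the inequality gives $r(\int_\WWW\AAA\,d\mu)=0$, that is, $\sigma(\int_\WWW\AAA\,d\mu)=\{0\}$ (the spectrum is nonempty, and $X\neq\{0\}$ is implicit). Compactness of $\int_\WWW\AAA\,d\mu$ follows from Theorem \ref{Cpt}, which applies because $X$ contains no copy of $\ell^1$ and $\AAA_t\in\mathcalb K(X)$ for every $t$. Together these show the integral is Volterra.
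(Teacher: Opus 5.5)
Your proof is correct and follows the paper's route: Proposition \ref{MerljivAprox} gives strong $\mu$-measurability, separability of $X^*$ rules out $\mathfrak{c}_0$, and Theorem \ref{thradijus} then yields the inequality. For the $\mathfrak{c}_0$ step the paper uses that a dual space containing $\mathfrak{c}_0$ contains $\ell^\infty$, while you pass through ``no copy of $\ell^1$'' and Theorem \ref{c0l1}; both are valid. Your appeal to Theorem \ref{Cpt} for compactness of $\int_\WWW\AAA\,d\mu$ in the Volterra part supplies a step the paper leaves implicit, since being Volterra requires compactness as well as $\sigma=\{0\}$.
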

\begin{proof}
    By Proposition \ref{MerljivAprox} we get that $\AAA:\Omega\to\mathcalb B(X)$ is strongly $\mu$-measurable function. Furthermore, as $X^*$ is separable (because $X$ has a shrinking F.\,D.\,D.) it cannot contain an isomorphic copy of $\ell^\iii$ and thus cannot contain an isomorphic copy of $\mathfrak{c}_0$, due to \cite[Corollary~6, Page 23]{DU}. 
Final conclusion now follows from Theorem \ref{thradijus}. If $\AAA_t$ is a Volterra operator for all $t \in \Omega$, then due to proved inequality we have that the operator $\int_\Omega\AAA\, d\mu$ is Volterra too.
\end{proof}

Concluding remarks: It is not known to the authors whether the strong $\mu$-measurability of the family $\AAA$ in Theorem \ref{thradijus} can be replaced by a weaker assumption. We also note that Lemma \ref{rismeas} provides alternative conditions for $\mu$-measurability of function $\Omega\ni t\mapsto r(\AAt)\in\R$.



\section*{Acknowledgements}
The authors of this research are partially supported by the Ministry of  Science, Technological Development and Innovation, Republic of Serbia, through the project \texttt{451-03-33/2026-03/200104}.

\end{document}